\DeclareMathOperator*{\argmax}{arg\,max}
 \newcommand{\rom}[1]{
  \textup{\lowercase\expandafter{\romannumeral#1}} }
\newcommand{\Rom}[1]{
  \textup{\uppercase\expandafter{\romannumeral#1}} }
\newcommand{\doublestar}{{\star \star}}
\newcommand{\reals}{{\mathbb R}}
\newcommand{\bbr}{\reals}
\newcommand{\bbz}{{\mathbb Z}}
\newcommand{\bbn}{{\mathbb N}}
\newcommand{\PP}{\mathbb{P}}
\newcommand{\EE}{\mathbb{E}}
\newcommand{\one}{{\bf 1}}
\newcommand{\eid}{\stackrel{{\rm d}}{=}}
\newcommand{\betast}{ {\beta_\ast }}
\newtheorem{theorem}{Theorem}[section]
\newtheorem{proposition}[theorem]{Proposition}
\newtheorem{lemma}[theorem]{Lemma}
\newtheorem{example}[theorem]{Example}%
\newtheorem{remark}[theorem]{Remark}%
\newtheorem{definition}[theorem]{Definition}
\newtheorem{assumption}[theorem]{Assumption}
\title{Moderately Heavy Extreme Values under Extreme Long-Range Dependence}
\author*[1]{\fnm{Zao-Li} \sur{Chen}}\email{zaolichen@ustc.edu.cn}
\affil*[1]{\orgdiv{School of Mathematical Sciences}, \orgname{University of Science and Technology of China}, \orgaddress{\street{96 Jinzhai Road}, \city{Hefei}, \postcode{230026}, \state{Anhui}, \country{China}}}
\abstract{We consider stationary sequences whose marginal tail is subexponential and lies in the Gumbel Maximum domain of attraction. Due to the extremely strong dependence, their extreme values are caused by multiple big values and are clustered in the large scale with fractal features.   To characterize these delicate phenomena, we establish functional extremal limit theorems with non-Gumbel limits.  Our results contribute  to the unsolved problem in \cite{chen:samorodnitsky:2022:article}.}
\begin{document}

\numberwithin{equation}{section}

\maketitle

{
  \hypersetup{linkcolor=blue}
  \tableofcontents
}

\newpage

\section{Introduction}
\label{sec:introduction}

\subsection{Extreme Value Theory and Long Range Dependence}

We  investigatea class of stationary processes $(X_t:t\in \bbz)$ with subexponential marginal distributions belonging to the  Gumbel maximum domain of attraction (MDA). These processes are long range dependent (LRD) such that their extremal clusters exhibit fractal features at large scales. We contribute to understanding two fundamental questions: $(\rom1)$  how subexponential extreme values illuminate LRD, and, conversely, $(\rom2)$  how LRD accentuates the subtle distinctions among different families of subexponential distributions.

\vspace{1em}

Consider a random variable $X$ with a tail distribution $\overline{H}(x)=\PP(X > x)$. We say $X$ is \textit{subexponential} if
\begin{equation}
    \label{eq:subexponential-definition}
    \lim_{x\to \infty} \frac{\overline{H\ast H}(x)}{ \overline{H} (x) } = 2;
\end{equation}
see \cite{foss:korshunov:zachary:2013:book}. 
Typically, the tail $\overline{H}$ decays subexponentially. Let $(X_t^\dagger:t\in \bbz)$ be an i.i.d.\ sequence with a subexponential distribution $H$ and denote by $M_n^\dagger = \max\{X_0^\dagger,\ldots, X_n^\dagger \}, n\in \bbn$ the partial maxima. 
Under regularity conditions, suppose that
\begin{equation}
\label{eq:iid-extreme-value-theorem;1}
    \frac{M_n^\dagger - a_n^\dagger}{b_n^\dagger} \Rightarrow G,
\end{equation}
for some non-degenerate distribution $G$. Then, up to affine transformation $G(ax+b)$ for $a>0$ and $b\in \reals$, $G(x)=G_\gamma(x)=\exp ( - (1+\gamma x)^{-1/\gamma} )$ for some $\gamma\geq 0$. If $\gamma>0$, then $G_\gamma$ is a $\gamma$-Fr\'echet distribution; otherwise $\gamma =0$ and it reduces to the standard Gumbel distribution; see \cite{deHaan:ferreira:2006:book}.

 \vspace{1em}

 Modern extreme value theory (EVT) studies dependent processes, such as stationary sequences. In such settings, exceedance over high thresholds may appear in clusters due to serial dependence. For subexponential marginal distributions, extreme values tend to exceed high thresholds, and the shape of extremal clusters reflects the underlying dependence structure. A key concept to quantify extremal clustering is the \textit{extremal index} $\theta$. Under strong mixing and weak dependent conditions, the existence of $\theta \in (0,1]$ can be established, see \cite{mikosch:wintenberg:2024:book}. Roughly speaking, $1/\theta$ describe the mean of the reciprocal of the cluster size. Hence, the smaller the $\theta$, the stronger the dependence. It should be noted that 
the extremal index is a mixed consequence of both dependence structures and marginal distributions; e.g.\ the AR(1) process with regularly varying tails; see \cite{kulik:soulier:2020:book}.

\vspace{1em}

This paper focuses on stationary sequences with an vanishing extremal index $\theta=0$, where serial dependence persists over long ranges such that extremal clusters are of size  $O_P(n)$ among $\{X_0,\ldots, X_n\}$. Although not exhaustive,  a systematic framework to  generate LRD processes exists.  A typical stationary process $(X_t:t \in \bbz)$ has the representation
\begin{equation}
    \label{eq:long-range-dependence-and-zero-extremal-index}
    X_t = \int_{\mathbb{Y}} f\circ \tau^t (y) \, \mathcal{N}(dy),
\end{equation}
where $(\mathbb{Y},\mathcal{Y},\mu, \tau)$ is an ergodic and conservative dynamical system,  $f$ is an integrable function
and $\mathcal{N}$ is an infinitely divisible random measure.
The persistence of extremes inherit the recurrence behavior of the underlying flow $\{ \tau^t: t\in \bbz\}$. And the shapes of extremal clusters reflect the interaction between the flow and the noise. Such a framework \eqref{eq:long-range-dependence-and-zero-extremal-index} allows a wide combination of dependence structures and marginal distributions. To see other connections between dynamical systems and EVT, we also refer to \cite{lucarini:faranda:freitas:freitas:kuna:holland:nicol:todd:vaienti:2016:book}.

\vspace{1em}

We study a canonical case of the model \eqref{eq:long-range-dependence-and-zero-extremal-index}, whose flow is moderated by renewal epochs with infinite mean. It is associated with a natural memory parameter $\beta \in (0,1)$. A bigger $\beta$ corresponds to a stronger LRD. This model was first introduced in \cite{rosinski:samorodnitsky:1996:article} with respect to $\alpha$-stable integrals. Its extremal properties was fully characterized in \cite{samorodnitsky:wang:2019:article} with extensions to regularly varying margins.  The seminal paper \cite{samorodnitsky:wang:2019:article}  revealed  an infinite number of phase transition points $\beta\in \left\{ \frac{m}{m+1}: m\in \bbn \right\}$. When $\beta \in \left( \frac{m-1}{m}, \frac{m}{m+1} \right]$, a $m$-big-jump principle holds. Extremal clusters exhibit fractal-like behaviors, which can be explained as aggregation of independent $\beta$-stable regenerative sets. For further developments on multiple stochastic integrations, we refer to \cite{bai:kulik:wang:2024preprint}, \cite{bai:wang:2024:article} and \cite{bai:wang:2023:article}.

\vspace{1em}

This paper investigates marginal distributions in the Gumbel MDA. In particular, such subexponential tails of the form
\begin{equation}
    \label{eq:subexponential-exponentially-slowly-varying}
    \overline{H}(x) \sim c \cdot \exp \big(  S (x) \big), \quad c>0, \quad S \in \mathsf{RV}_0.
\end{equation}
On one hand, such tails are rapid varying, hence substantially lighter than regularly varying tails. On the other hand, their structural similarity in \eqref{eq:subexponential-exponentially-slowly-varying} prevents them from being excessively light.  This paradoxical nature leads to unusual extremal behaviors in LRD processes with such marginal tails. The work of \cite{chen:samorodnitsky:2022:article} first described these pictures in the zone $\beta\in (0,1/2)$. Namely, a
 similar time-changed representation for the limit extremal process hold in both \cite{lacaux:samorodnitsky:2014:article} and \cite{chen:samorodnitsky:2020:article},  yet a distinct 2-big-jump principle occurs in \cite{chen:samorodnitsky:2020:article}. This result indicates the inconsistency between tails of the form \eqref{eq:subexponential-exponentially-slowly-varying} across different MDAs.

\vspace{1em}

The goal of this paper is to extend  \cite{chen:samorodnitsky:2022:article} to the case $\beta \in \left( \frac{1}{2}, 1 \right)$. For $\beta \in \left( \frac{m}{m+1} , \frac{m+1}{m+2} \right)$, we show that non-Gumbel limit objects occur under an anomalous ``$(m+1)$-big-jump principle". Namely, extremes before taking the limit are caused by $m$ big jumps and an extra big jump of a smaller magnitude. However, limit objects are subject to the ``$m$-big-jump" principle similar to \cite{samorodnitsky:wang:2019:article}. We fully characterize this interesting phenomena by proving extremal limit theorems, namely the weak convergences of random sup-measures and extremal processes. Our work contributes to revealing delicate differences among different types of subexponential distributions in an EVT approach.

\subsection{Organization}

In Section \ref{sec:moderately-heavy-distributions}, we develop the theory of the moderately heavy distributions with connections to the Gumbel MDA.  Section \ref{sec:random-sup-measures-and-extremal-processes} focus on the limit random sup-measures and limit extremal processes. We also compare them with other closely related limit objects in the previous literature. In Section \ref{sec:DNRMC}, we build the conservative dynamical system over null-recurrent Markov chains. The dynamics of return times and their independent intersections are analyzed.  In Section \ref{sec:extremal-limit-theorems}, we introduce the long range dependent processes prove their extremal limit theorems.

\subsection{Notations and Symbols}

We use the following list of notations/symbols throughout the paper.
\begin{itemize}
    \item $L \triangleq R$ means that $L$ is defined as $R$, or $R$ is defined as $L$. 
    \item Since $(\infty, -\infty) =\emptyset$, we write $\inf \emptyset  = \min \emptyset = \infty$ 
    and $\sup \emptyset = \max \emptyset = -\infty$.
    \item An ordered vector is denoted by $(x_1,\ldots, x_d)_<$, i.e.\ $x_1< \ldots < x_d$.
    \item For any finite measure $\Psi$ on $\bbr$, denote by $\overline{\Psi}(x)=\Psi(x,\infty)$ the tail measure.  
    \item For an increasing function $J:\bbr \to \bbr$,  $J^\leftarrow (x) \triangleq \inf\{ r : J(r) \geq x \}$. 
    \item For random variables $X$ and $Y$, $X \leq_P Y$ if $\PP(X \leq x) \leq \PP(Y\leq x)$ for all $x\in \bbr$.
    \item Let $(a_n)$ and $(b_n)$ be two positive sequences. Then:
    \begin{itemize}
        \item $a_n = o(b_n)$ if $\lim_{n\to \infty} a_n / b_n = 0$ and $a_n \sim b_n$ if $\lim_{n\to \infty} a_n / b_n = 1$; 
        \item $a_n \lesssim b_n$ if  $\limsup_{n\to \infty} a_n / b_n < \infty$ and $a_n \gtrsim b_n$ if $\liminf_{n\to \infty} a_n / b_n >0$;
        \item  $a_n \asymp b_n$ if both $a_n \lesssim b_n$  and $a_n \gtrsim b_n$.
        \item  For functions $f,g:\bbr_+\to (0,\infty)$, we adopt the same asymptotic notations whose meanings are obvious.
    \end{itemize}

\end{itemize}


\section{Moderately Heavy Distributions}
\label{sec:moderately-heavy-distributions}

\subsection{Subexponential Tails}

\label{subsec:subexponential-tails}

 A useful criterion of subexponentiality is given below.

\vspace{1em}

\begin{proposition}[\cite{pitman:1980:article} 
Theorem $\Rom{2}$]
\label{prop:pitman-subexponentiality-criterion}
\phantom{blank}

     Let $X$ be a random variable that is unbounded on the right.  For all $x\geq 0$, set $g(x) = -\log \PP(X > x)$. Suppose that there is $x_0 >0$ such that 
    $g$ is differentiable on $(x_0,\infty)$ and $g^\prime$ decreases to $0$ as $x\to \infty$. If 
   $$
         \int_{x_0}^\infty \exp \big( x g^\prime (x) - g(x) \big) g^\prime (x) dx < \infty,
    $$
    then $X$ is subexponential. 
\end{proposition}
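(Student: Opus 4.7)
The plan is to verify $\overline{H\ast H}(x)/\overline{H}(x)\to 2$ by combining the concavity of $g$ (inherited from $g'$ being decreasing) with Pitman's integrability hypothesis. Two immediate consequences of the hypotheses will be used repeatedly. First, $\overline{H}$ is long-tailed: for fixed $y$, $\log[\overline{H}(x-y)/\overline{H}(x)]=\int_{x-y}^x g'(t)\,dt\leq y\,g'(x-y)\to 0$, and $H$ has density $h(y)=g'(y)e^{-g(y)}$ on $(x_0,\infty)$. Second, set $\Phi(x):=\exp(xg'(x)-g(x))$. A direct computation using $g(x)-g(y)=\int_y^x g'(t)\,dt$ and the monotonicity of $g'$ shows that $\Phi$ is non-increasing on $(x_0,\infty)$; since $g(\infty)=\infty$ forces $\int_{x_0}^\infty g'(x)\,dx=\infty$, the finiteness of the Pitman integral $\int \Phi(x)g'(x)\,dx$ then forces $\Phi(x)\to 0$ as $x\to\infty$.

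The lower bound $\liminf \overline{H\ast H}(x)/\overline{H}(x)\geq 2$ is classical, so I would focus on the upper bound via the symmetrization
\begin{equation*}
\overline{H\ast H}(x)=2\overline{H}(x)+2\!\int_{x/2}^{x}\!\overline{H}(x-y)\,H(dy)-\overline{H}(x/2)^2,
\end{equation*}
reducing the goal to showing $\overline{H}(x/2)^2/\overline{H}(x)\to 0$ and $I(x):=\int_{x/2}^x\overline{H}(x-y)H(dy)/\overline{H}(x)\to 0$. The first follows from the tangent inequality at $x/2$: $g(x)\leq g(x/2)+(x/2)g'(x/2)$ gives $g(x)-2g(x/2)\leq (x/2)g'(x/2)-g(x/2)$, so $\overline{H}(x/2)^2/\overline{H}(x)\leq \Phi(x/2)\to 0$.

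For $I(x)$ the substitution $u=x-y$ together with the tangent bound $g(x)-g(x-u)\leq u\,g'(x-u)$ yields
\begin{equation*}
I(x)\leq \int_0^{x/2}\exp\{u\,g'(x-u)-g(u)\}\,g'(x-u)\,du,
\end{equation*}
which I would split at a cutoff $a(x)\to\infty$ chosen so that $a(x)g'(x/2)\to 0$ (possible because $g'(x)\to 0$). The contribution from $(0,a(x))$ is negligible because the exponent is $o(1)$ and $g'(x-u)\leq g'(x/2)\to 0$, so this part is bounded by $2g'(x/2)\int_0^{a(x)}\overline{H}(u)\,du$, which vanishes for a suitably slow choice of $a(x)$. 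The contribution from $(a(x),x/2)$, after the change of variables $v=x-u$, becomes $\int_{x/2}^{x-a(x)}\exp\{(x-v)g'(v)-g(x-v)\}g'(v)\,dv$, and the task is to dominate this by a multiple of a tail of the Pitman integral $\int \Phi(v)g'(v)\,dv$. This pointwise comparison is the main obstacle, since $(x-v)g'(v)-g(x-v)\leq vg'(v)-g(v)$ is not true pointwise; I would handle it by combining the concavity estimate $g(v)-g(x-v)\leq (2v-x)g'(x-v)$ with the monotonicity of $\Phi$ to extract a uniform majorant of the form $C\,\Phi(v)g'(v)$, after which Pitman's integrability ensures the tail vanishes and $I(x)\to 0$. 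Together with the lower bound this gives subexponentiality.
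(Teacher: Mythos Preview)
The paper does not prove this proposition; it is quoted from Pitman (1980) as a black box, so there is no ``paper's proof'' to compare against. Your outline is essentially Pitman's own argument, and most of it is correct: the long-tailedness of $\overline{H}$, the monotonicity and vanishing of $\Phi(x)=\exp\bigl(xg'(x)-g(x)\bigr)$, the symmetrized convolution identity, the bound $\overline{H}(x/2)^2/\overline{H}(x)\le\Phi(x/2)\to 0$, and the treatment of the piece $u\in(0,a(x))$ are all fine.

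The gap is in your treatment of the second piece of $I(x)$. After reaching
\[
I(x)\;\le\;\int_0^{x/2}\exp\bigl(u\,g'(x-u)-g(u)\bigr)\,g'(x-u)\,du
\]
you split at $a(x)$ and then, for $u\in(a(x),x/2)$, change variables to $v=x-u$ and try to dominate the integrand by $C\,\Phi(v)g'(v)$. As you yourself note, the inequality $(x-v)g'(v)-g(x-v)\le vg'(v)-g(v)$ is false pointwise, and the patch you sketch via $g(v)-g(x-v)\le(2v-x)g'(x-v)$ together with monotonicity of $\Phi$ does not close the gap: it leaves a residual $(2v-x)\bigl(g'(x-v)-g'(v)\bigr)$, and taking $v$ near $x-a(x)$ shows this is of order $x\,g'(a(x))$, which need not be bounded.

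The fix is simply not to change variables. On $u\in(a(x),x/2)$ one has $u<x/2<x-u$, hence $g'(x-u)\le g'(u)$ by the monotonicity of $g'$, so the integrand is dominated pointwise by $\exp\bigl(u\,g'(u)-g(u)\bigr)\,g'(u)=\Phi(u)\,g'(u)$. Therefore
\[
\int_{a(x)}^{x/2}\exp\bigl(u\,g'(x-u)-g(u)\bigr)\,g'(x-u)\,du\;\le\;\int_{a(x)}^{\infty}\Phi(u)\,g'(u)\,du\;\longrightarrow\;0
\]
as $a(x)\to\infty$, by the Pitman integrability hypothesis. With this correction your argument is complete.
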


\vspace{1em}

Let the probability distribution $H$ be subexponential  satisfying \eqref{eq:subexponential-definition}. 
If $F$ is another finite measure (not necessarily a probability distribution) such that $\overline{F} \sim c \overline{H}$ for some constant $c>0$, then $F$ is also subexponential; see \cite{samorodnitsky:2016:book} Proposition 4.1.7.
In this paper, we shall construct  subexponential distributions from infinitely divisible distributions. The following proposition guarantees the feasibility.

\vspace{1em}

\begin{proposition}[\cite{embrechts:goldie:veraverbeke:1979:paper} Theorem 1]
\label{prop:subexponential-from-Levy-measure}
\phantom{blank}

     Let $X$ be an infinitely divisible distribution with the characteristic triple $(\sigma^2,\nu,b)$ where $\nu$ is the L\'evy measure. If the  distribution $\nu(1,x] / \nu (1,\infty), x\geq 1$ is subexponential, then $X$ is subexponential and 
    $
    \mathbb{P}(X > x) \sim \nu(x,\infty).
    $
\end{proposition}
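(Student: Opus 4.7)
The plan is to exploit the Lévy--Itô decomposition to split $X$ into a compound Poisson piece carrying all the heavy mass and a residual piece with all exponential moments finite, and then transfer the known subexponential behaviour of compound Poisson sums to $X$.

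\textbf{Step 1: Decomposition.} Write $X \stackrel{d}{=} X_1 + X_2$, where $X_1$ corresponds to the part of the Lévy measure $\nu_1 \triangleq \nu|_{(1,\infty)}$ and $X_2$ absorbs everything else (Gaussian component, drift, and the Lévy measure on $(-\infty,1]$). Since $X_2$ has Lévy measure supported in $(-\infty,1]$, all its exponential moments are finite, so its tail is lighter than any exponential: for every $\delta > 0$ one has $\PP(X_2 > x) = o(e^{-\delta x})$. In particular, $\overline{H}_2(x) = o(\overline{\nu}(x))$ because $\overline{\nu}$ is a tail of a subexponential law and hence heavier than any exponential.

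\textbf{Step 2: Compound Poisson representation.} The variable $X_1$ has the compound Poisson representation $X_1 \eid \sum_{k=1}^N Y_k$ with $N\sim \mathrm{Poisson}(\lambda)$, $\lambda = \nu(1,\infty)$, and $(Y_k)$ i.i.d.\ with law $F_1(dx) = \nu(dx)/\lambda$ on $(1,\infty)$. By hypothesis $F_1$ is subexponential, so $\overline{F_1^{*n}}(x) \sim n\,\overline{F_1}(x)$ as $x\to\infty$ for every fixed $n$. Summing termwise,
\begin{equation*}
    \PP(X_1 > x) \;=\; \sum_{n=1}^\infty e^{-\lambda}\frac{\lambda^n}{n!}\, \overline{F_1^{*n}}(x),
\end{equation*}
and the goal is to pass to the limit inside the sum to obtain $\PP(X_1 > x) \sim \lambda\,\overline{F_1}(x) = \overline{\nu}(x)$.

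\textbf{Step 3: Dominated passage to the limit.} The main technical point is the interchange of limit and infinite sum, and it is where I expect all the real work to be. The standard tool is Kesten's bound: for subexponential $F_1$ and any $\vep > 0$ there is $K = K(\vep) < \infty$ with $\overline{F_1^{*n}}(x) \leq K(1+\vep)^n \overline{F_1}(x)$ for all $n\in\bbn$ and $x\geq 0$. With $\vep$ chosen so that $\sum_n e^{-\lambda}\lambda^n (1+\vep)^n / n! < \infty$, dominated convergence gives $\PP(X_1 > x)/\overline{F_1}(x) \to \sum_{n\geq 1} e^{-\lambda}\lambda^n n/n! = \lambda$, i.e.\ $\PP(X_1 > x) \sim \lambda \overline{F_1}(x) = \overline{\nu}(x)$.

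\textbf{Step 4: Reassembly and subexponentiality of $X$.} Finally combine the two components. Using the convolution-tail lemma (if $F_1$ is subexponential and $\overline{F_2}(x) = o(\overline{F_1}(x))$, then $\overline{F_1 * F_2}(x) \sim \overline{F_1}(x)$, and in particular $F_1 * F_2$ is subexponential), the bound from Step 1 together with $\PP(X_1 > x) \sim \overline{\nu}(x)$ yields $\PP(X > x) \sim \PP(X_1 > x) \sim \overline{\nu}(x)$, and $X$ inherits subexponentiality by Proposition 2.1's companion fact already invoked in the excerpt (asymptotic equivalence of tails preserves subexponentiality). The delicate step throughout is \textbf{Step 3}: without Kesten's uniform bound the termwise-in-$n$ asymptotics do not assemble into the desired asymptotic for $X_1$.
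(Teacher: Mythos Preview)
The paper does not give its own proof of this proposition; it is quoted verbatim as Theorem~1 of \cite{embrechts:goldie:veraverbeke:1979:paper} and used as a black box. Your sketch is precisely the standard argument underlying that theorem: the L\'evy--It\^o split into a compound Poisson part $X_1$ plus a remainder $X_2$ with all exponential moments, Kesten's uniform bound $\overline{F_1^{*n}}(x)\le K(1+\vep)^n\overline{F_1}(x)$ to justify dominated convergence in the Poisson expansion of $\PP(X_1>x)$, and the long-tailed convolution lemma to reattach $X_2$. All four steps are correct as stated, and your identification of Step~3 as the crux is accurate.
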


\vspace{1em}

\subsection{Gumbel Maximum Domain of Attraction}
\label{subsec:gumbel-max-domain-of-attraction}

Structures of distributions that lie in the Gumbel MDA are well known. We review some useful and fundamental facts  from Chapter 0 and Chapter 1 in \cite{resnick:1987:book}. 

\vspace{1em}

 Let $\Lambda(x) = \exp ( - \exp (-x) )$ denote the standard Gumbel distribution function on $\bbr$. 
A distribution $H$ on $\bbr$ lies in the Gumbel MDA, i.e.\  $H \in \mathsf{MDA}(\Lambda)$, if  the tail $\overline{H}$ admits a \textit{von Mises representation}; see (1.3) in \cite{resnick:1987:book}. Namely, there exists a $x_0\in \bbr$ such that

\begin{equation}
        \label{eq:von-mises-representation-defintion}
        \overline{H} (x) = c(x) \exp \left(  - \int_{x_0}^x \frac{1}{h(u)} du  \right)  \quad \forall\, x \geq x_0;
    \end{equation}   
    \begin{itemize}
        \item[($\rom1$)] $c(\cdot)$ is a positive function on $(x_0,\infty)$ with $\lim_{x\to \infty} c(x)  \in (0,\infty)$;
        \item[($\rom2$)]  $h(\cdot)$ is the \textit{auxiliary function}, which is positive and absolutely continuous on $(x_0,\infty)$ such that
        $\lim_{x\to \infty} h^\prime (x) = 0$. Thus, $h(x)=o(x)$.
    \end{itemize}
For an i.i.d.\ sequence $(X(t): t\in \bbn)$ with the marginal distribution $H$ in \eqref{eq:von-mises-representation-defintion}, then one can take
\begin{equation}
\label{eq:def-gumbel-maximum-domain-of-attraction;1}
    b_n = (1/ \overline{H})^\leftarrow (n)  \quad   \text{and} \quad a_n =  h(b_n)
\end{equation}
so that 
\begin{equation}
\label{eq:def-gumbel-maximum-domain-of-attraction;2}
    \frac{\max\{ X(1),\ldots, X(n) \} - a_n}{b_n} \Rightarrow \Lambda.
\end{equation}
Below we characterize the  variability properties of $ 1 \big/  \overline{H} $ and $ (1 \big/ \overline{H})^\leftarrow \triangleq V_H$.

\vspace{1em}

\begin{proposition}[Exercise 1.1.9, Proposition 0.9 and Proposition 0.10 in \cite{resnick:1987:book}]
\label{proposition:Gamma-Pi-Variation}
\phantom{bk}
  
    Let the distribution $H\in \mathsf{MDA}(\Lambda)$ be in \eqref{eq:von-mises-representation-defintion}. Let $\overline{\Psi}$ be any tail measure on $\bbr$ such that 
    $ \overline{\Psi} (x) \sim c \overline{H}(x)$ for some constant  $c>0$.

    \begin{itemize}
          \item[$(\rom1)$] The tail $\overline{H}$ is rapid varying, formally $\overline{H} \in \mathsf{RV}_{-\infty}$ and
          \begin{equation}
         \label{eq:MDA-Lambda-rapid-varying}
        \lim_{t\to \infty} \frac{\overline{H}(tx)}{\overline{H}(t)} = 
        \begin{cases}
           0, \quad & x>1 \\
           \infty, & x \in (0,1)
        \end{cases}.
         \end{equation}
          \item[$(\rom2)$] The function $ 1 \big/  \overline{H} $ is $\Gamma$-varying up to tail equivalence, i.e.\ for any $X>0$,
           \begin{equation}
         \label{eq:Gamma-varying-definition}
        \lim_{t\to \infty} \frac{\overline{H}(t)}{\overline{H}(t+ x h(t))} = \lim_{t\to \infty} \frac{\overline{\Psi}(t)}{\overline{\Psi}(t+ x h(t))} = e^x.
     \end{equation}
           \item[$(\rom3)$] The  function $V_H$ is $\Pi$-varying,  i.e.\ for any $x>0$, 
            \begin{equation}
        \label{eq:Pi-varying-definition}
     \lim_{t\to \infty}   
     \frac{V_H(tx) - V_H(t)}{ h\circ V_H(t) }
     = \log  x  \quad  \text{and} \quad 
     \lim_{t\to \infty}   
     \frac{V_H(t) - V_\Psi(t)}{ h\circ V_H(t) } = -\log c.
    \end{equation}
    \end{itemize}
\end{proposition}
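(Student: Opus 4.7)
The plan is to leverage the von Mises representation \eqref{eq:von-mises-representation-defintion} directly for parts $(\rom1)$ and $(\rom2)$, and to exploit the duality between $\Gamma$-variation and $\Pi$-variation for part $(\rom3)$. A useful preliminary reduction: when $\overline{\Psi} \sim c \overline{H}$, $\overline{\Psi}$ itself admits a von Mises representation with the \emph{same} auxiliary function $h$ and a new prefactor $\tilde c(x) \triangleq c(x)\, \overline{\Psi}(x)/\overline{H}(x)$ that still converges to a positive finite limit. This reduces every assertion about $\overline{\Psi}$ to one about a distribution sharing its auxiliary function with $\overline{H}$.

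For $(\rom1)$, I would start from
\begin{equation*}
    \frac{\overline{H}(tx)}{\overline{H}(t)} = \frac{c(tx)}{c(t)} \exp \left( -\int_t^{tx} \frac{du}{h(u)} \right),
\end{equation*}
whose prefactor tends to $1$. Using $h(u) = o(u)$, for any $\vep > 0$ eventually $1/h(u) \geq 1/(\vep u)$, so for $x > 1$ one gets $\int_t^{tx} du/h(u) \geq \vep^{-1} \log x \to \infty$ as $\vep \downarrow 0$, and hence the ratio vanishes. The case $x \in (0,1)$ is symmetric and yields divergence, matching \eqref{eq:MDA-Lambda-rapid-varying}.

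For $(\rom2)$, the substitution $u = t + s h(t)$ rewrites the exponent as $\int_0^x h(t)/h(t + s h(t))\, ds$. By the mean value theorem, $h(t + s h(t)) - h(t) = s h(t)\, h^\prime(\xi)$ for some $\xi \in (t, t + s h(t))$, and $h^\prime \to 0$ gives $h(t + s h(t))/h(t) \to 1$ uniformly in $s$ on compact intervals. The integral therefore tends to $x$, yielding $\overline{H}(t)/\overline{H}(t + x h(t)) \to e^x$, and the statement for $\overline{\Psi}$ follows from the preliminary reduction. For $(\rom3)$, set $U \triangleq 1/\overline{H}$ so $(\rom2)$ reads $U(y + r h(y))/U(y) \to e^r$; with $y = V_H(t)$ and $U(y + r h(y)) = tx$ one is forced to take $r \to \log x$, giving $V_H(tx) - V_H(t) \sim \log x \cdot h \circ V_H(t)$. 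For the second identity, sandwiching $\overline{\Psi}$ between $(c \pm \vep)\overline{H}$ yields $V_\Psi(t) \in [V_H((c - \vep) t), V_H((c + \vep) t)]$, and applying the first identity to $x = c \pm \vep$ and letting $\vep \downarrow 0$ forces $V_H(t) - V_\Psi(t) \sim -\log c \cdot h \circ V_H(t)$.

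The main obstacle is the uniformity in $s$ required in $(\rom2)$ together with the $\vep \downarrow 0$ limit in $(\rom3)$; both rely on $h \circ V_H$ being slowly varying along the relevant scales, which in turn follows once one notes $s h(t) = o(t)$ and performs a routine $\vep$-$\delta$ argument. Since all three items are standard consequences of the von Mises representation, the cleanest write-up is simply to perform the tail-equivalence reduction and then cite Exercise 1.1.9 together with Propositions 0.9 and 0.10 of \cite{resnick:1987:book}.
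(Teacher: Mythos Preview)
The paper does not prove this proposition at all; it is stated purely as a citation to Exercise~1.1.9 and Propositions~0.9--0.10 in \cite{resnick:1987:book}, with no accompanying argument. Your sketch is mathematically sound and your concluding recommendation---reduce $\overline{\Psi}$ to a tail with the same auxiliary function and then cite Resnick---is exactly what the paper does, so the detailed computations you outline for $(\rom1)$--$(\rom3)$, while correct, go beyond what the paper itself supplies.
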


\subsection{Moderately Heavy Tails}

\subsubsection{Definition and Examples}

\begin{definition}[Moderately Heavy Distribution]
\label{def:exp-slow-vary-distr}
\phantom{blank}

    Let $H\in \mathsf{MDA}(\Lambda)$  with the
 von Mises representation
\begin{equation} \label{eq:marginal-tail-distribution-von-mises-representation}
    \overline{H}(x) \sim c_\# \overline{H_\#} (x), \quad \overline{H_\#}(x) =  \exp \left( - \int_{x_0}^x \frac{du}{h(u)}   \right) , \quad x\geq x_0
\end{equation}
for some constants $c_\# >0$ and $x_0\in \reals$.
  We call the distribution $H$ (or its tail $\overline{H}$)  moderately heavy if the following conditions hold.
\begin{itemize}
    \item[$(\rom1)$]  $h \in \mathsf{RV}_1$.   
    \item[$(\rom2)$] Set $ V_\#  \triangleq ( 1\big/  \overline{H_\#} )^\leftarrow \in \mathsf{RV}_0$. Then, for some $v_0, z_0>0$, $V_\#$  satisfies 
\begin{equation}
    \label{eq:slowly-varying-subexp-quantile}
    V_\#(z) \sim v_0 \cdot \exp \left(  \int_{z_0}^z \frac{\zeta(\log u)}{u \log u} du   \right),
    \quad    V_\# ^\prime (z)  \asymp \left[ \exp \left(  \int_{z_0}^z \frac{\zeta(\log u)}{u \log u} du   \right)\right]^\prime.
\end{equation}
where $v_0>0$ and $\zeta \in \mathsf{RV}_{\alpha}$ such that $\alpha\in [0,1)$ and, for some $\delta>0$,
\begin{equation}
    \label{eq:growth-condition-of-zeta(u)}
    \frac{\zeta(u)}{(\log u)^\delta} \to \infty \quad  \text{and} \quad 
    \zeta (u)  \lesssim \frac{u}{\log u}.
\end{equation}

\end{itemize}

\end{definition}

\vspace{1em}

\begin{remark}[Motivation of Definition \ref{def:exp-slow-vary-distr}]
\label{remark:moderately-slowly-varying}
\phantom{blank}
{\rm
\begin{itemize}
    \item By the Karamata's integral criterion, the above condition $(\rom1)$ forces 
    $$
    S(x) = \int_{x_0}^x \frac{du}{h(u)}  \in \mathsf{RV}_0,
    $$
    which agrees with the informal definition \eqref{eq:subexponential-exponentially-slowly-varying}.
    \item Since $1/\overline{H_\#} \in \mathsf{RV}_\infty$, the function $V_\# \in \mathsf{RV}_0$ by Proposition 0.8 (v) in \cite{resnick:1987:book}. The choice of $\alpha \in [0,1)$ agrees with the Karamata representation of $\mathsf{RV}_0$ functions. We also note that 
    $$
    \frac{1}{\overline{H_\#} (x)} = \exp \circ I_h (x), \quad I_h(x) \triangleq \int_{x_0}^x \frac{du}{h(u)}.  
    $$
    From \cite{embrechts:hofert:2014:notes} Proposition 2.3 (8), we get that
    $$
    \big( 1/\overline{H_\#} \big)^\leftarrow (z) = I_h^\leftarrow \circ \log z .
    $$
    Since $I_h \in \mathsf{RV}_0$, we get $I_h^\leftarrow\in \mathsf{RV}_\infty$. The  right-hand-side of \eqref{eq:slowly-varying-subexp-quantile} is 
    $$
    v_0 \cdot \exp \left( \int_{\log z_0} ^ {\log z} \frac{\zeta(u)}{u} du \right),
    $$
    which is a composition of a $\mathsf{RV}_\infty$ function with $\log z$.
    \item Definition \ref{def:exp-slow-vary-distr} is a much neater replacement of Assumption 3.1 in \cite{chen:samorodnitsky:2022:article}. Namely, 
    the condition $\zeta \in \mathsf{RV}_a$ is sufficient for the conditions (B1), (B3) and (B4) ibid. The verification is direct.
\end{itemize}
}
\end{remark}

\vspace{1em}

\begin{example}[Log-Normal Tails]
\label{example:log-normal-tail}
\phantom{blank}

   For any parameter $\gamma > 1$,  define the  tail  probability
    \begin{equation*}
         \overline{H_\#} (x)=
        \exp\left( - (\log x )^{ \gamma} \right) , \quad x \in (1,\infty).
    \end{equation*}
    \begin{itemize}
        \item The auxiliary function is
        \begin{equation*}
            h(u) = \frac{u}{\gamma (\log u)^{\gamma - 1}}\in \mathsf{RV}_1,
        \end{equation*}
        which satisfies $h^\prime (u) > 0$ for $u > 1$.
        \item The quantile function and its Karamata representation are
        \begin{align*}
            & V_\#(x) = \exp \left(  (\log x )^{\frac{1}{\gamma}}  \right), \quad x \geq 1 ; 
            \\
            & V_\#(x) = \exp\left( \int_1^x \frac{(\log u)^{\frac{1}{\gamma}-1}}{\gamma u}  du \right), \quad \zeta (u) = \frac{1}{\gamma} u^{\frac{1}{\gamma}}\in \mathsf{RV}_{\frac{1}{\gamma}}.
        \end{align*}
    \end{itemize}
This example corresponds to  $\zeta\in \mathsf{RV}_\alpha, \alpha =  1 / \gamma \in (0,1)$.
\end{example}

\vspace{1em}

\begin{example}[Super-Log-Normal Tails]
\label{example:super-log-normal-tail}
\phantom{blank}

Let $k\in \bbn$, denote the $k$-times composed exponential and logarithm by
\begin{equation*}
    E_k (x)= \underbrace{ \exp \circ \cdots \circ \exp}_{k\text{ compositions}} (x), \quad 
    L_k (x)= \underbrace{ \log \circ \cdots \circ \log}_{k\text{ compositions}} (x).
\end{equation*}
For parameters $\gamma \in (0,1)$ and $n\in \bbn$,  define for $x>E_n (0)$,
\begin{equation*}
    S_{n,\gamma} (x) = E_n 
\Big[ \big(  L_n (x) \big)^\gamma \Big ] \in \mathsf{RV}_0
\quad 
\text{and}
\quad
\overline{H_\#}(x) = 
        \exp\left( - S_{n,\gamma} (x) \right).
\end{equation*}
Though $\overline{H_\#} \to 0$, 
the measure $H_\#$ is obviously not a probability distribution.  
\begin{itemize}
    \item  The auxiliary function is 
\begin{equation*}
    h(u) = 
     \frac
     { u \cdot 
      \Big( L_n ( u ) \Big)^{1-\gamma}  \cdot 
      \prod_{k=1}^{n-1} L_k (u)
    }
    {  \gamma \cdot 
     \prod_{k=1}^n  E_k
    \bigg[  \Big( L_n (u)  \Big)^\gamma   \bigg]
    } \in \mathsf{RV}_1 , \quad x > E_n (0).
\end{equation*}
Clearly, $h^\prime (x) > 0$ for $x \gg 0$.
    \item The quantile function and the Karamata representation are 
    \begin{align*}
        & V_\#(z) =   E_n
\bigg[ \Big( L_{n+1} (z) \Big)^ \frac{1}{\gamma} \bigg ] = E_n(0) \cdot \exp \left( \int_{z_0}^z \frac{\zeta(\log u)}{u \log u }  du \right),  \quad z \geq z_0=E_{n+1} (0);
\\
 & \zeta( u ) = \frac{1}{\gamma} \cdot \frac{
        \Big( L_{n} (u) \Big )^{\frac{1}{\gamma} - 1}  
        \prod_{k=1}^{n-1} E_k \bigg[ \Big(  L_n (u)  \Big)^{\frac{1}{\gamma}}   \bigg]
        }
        { \prod_{k=1}^{n-1} L_k (u) } \in \mathsf{RV}_0.
    \end{align*}
\end{itemize}

This example illustrate the case  $\zeta\in \mathsf{RV}_0$. It also extends the Example 3.2 in
 \cite{chen:samorodnitsky:2022:article},  whose canonical form is $\overline{H_\#}(x) = \exp (- S_{1,\gamma})$. We also note the following asymptotic relations, which  helps to compare the heaviness of different tails in this family.
 \begin{align*}
     & S_{n,\gamma_1} = o (   S_{n,\gamma_2}  ), \quad \forall \, n \in \bbn, 0<
     \gamma_1 < \gamma_2 < 1; \\
     & S_{n_1,\gamma_1} = o (   S_{n_2,\gamma_2}  )
     , \quad \forall \, n_1, n_2 \in \bbn, n_1> n_2, \forall \, \gamma_1, \gamma_2 \in (0,1).
 \end{align*}
\end{example}

\subsubsection{Properties}

\begin{proposition}
\label{prop:slowly-varying-subexp-implies-subexponentiality}
\phantom{blank}

     The distribution 
     $H$ in Definition \ref{def:exp-slow-vary-distr}  is subexponential.
\end{proposition}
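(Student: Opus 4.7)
The plan is to apply the Pitman criterion (Proposition \ref{prop:pitman-subexponentiality-criterion}) to a probability distribution whose tail agrees with $\overline{H_\#}$ on $[x_0,\infty)$, and then transfer subexponentiality to $H$ via the asymptotic equivalence $\overline{H}\sim c_\#\overline{H_\#}$ recalled in Subsection \ref{subsec:subexponential-tails}. Setting $g(x)=-\log\overline{H_\#}(x)=\int_{x_0}^x du/h(u)=S(x)$ gives $g'(x)=1/h(x)$; since $h\in\mathsf{RV}_1$ and slowly varying multipliers satisfy $x L(x)\to\infty$, we have $h(x)\to\infty$ and therefore $g'(x)\downarrow 0$, after replacing $h$ by an asymptotically equivalent smooth and eventually increasing representative (permissible because it only perturbs $\overline{H_\#}$ by a bounded factor, preserving the hypotheses of the tail-equivalence principle quoted after Proposition \ref{prop:pitman-subexponentiality-criterion}).

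The bulk of the work is verifying the Pitman integral
$$
\int_{x_0}^\infty \exp\!\big(xg'(x)-g(x)\big)\,g'(x)\,dx
=\int_{x_0}^\infty \exp\!\Big(\tfrac{x}{h(x)}-S(x)\Big)\,\frac{dx}{h(x)}<\infty.
$$
Substitute $u=S(x)$, so that $du=dx/h(x)$ and $x=V_\#(e^u)$ by the identity $V_\#=I_h^\leftarrow\circ\log$ noted in Remark \ref{remark:moderately-slowly-varying}. Differentiating this identity yields $V_\#'(z)=h(V_\#(z))/z$, and combining with the asymptotic estimate for $V_\#'$ in \eqref{eq:slowly-varying-subexp-quantile} gives
$$
h(V_\#(z))\asymp V_\#(z)\,\frac{\zeta(\log z)}{\log z},\qquad\text{hence}\qquad \frac{x}{h(x)}\Big|_{x=V_\#(e^u)}\asymp \frac{u}{\zeta(u)}.
$$
The Pitman integrand is therefore dominated, for large $u$, by a constant multiple of $\exp\!\big(u/\zeta(u)-u\big)=\exp\!\big(-u(1-1/\zeta(u))\big)$. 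The growth condition $\zeta(u)/(\log u)^\delta\to\infty$ from \eqref{eq:growth-condition-of-zeta(u)} forces $\zeta(u)\to\infty$, so for $u$ large enough $1-1/\zeta(u)\geq 1/2$ and the integrand is bounded by $\exp(-u/2)$. Integrability is immediate, the Pitman criterion applies, and $H_\#$ (hence $H$) is subexponential.

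The principal technical obstacle is converting the asymptotic relation \eqref{eq:slowly-varying-subexp-quantile}, which is phrased on the quantile side, into the pointwise comparison $x/h(x)\asymp S(x)/\zeta(S(x))$ needed inside the Pitman integral; this is the only place where the precise form of the Karamata representation of $V_\#$ is essential. A minor subtlety is legitimizing the use of Pitman's criterion, which assumes $g'$ is strictly decreasing to zero, by passing to a smooth and eventually increasing tail-equivalent version of $h$. With these two points handled, the remainder is the routine substitution and elementary bounding above.
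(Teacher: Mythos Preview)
Your argument is correct, but it takes a longer route than the paper's. Both proofs verify Pitman's criterion for $g(x)=\int_{x_0}^x du/h(u)$; the difference is in how one shows that $xg'(x)-g(x)$ is sufficiently negative. You pass to the quantile scale via the substitution $u=S(x)$, invoke the Karamata representation \eqref{eq:slowly-varying-subexp-quantile} of $V_\#$ to obtain $x/h(x)\asymp u/\zeta(u)$, and then use $\zeta(u)\to\infty$ from \eqref{eq:growth-condition-of-zeta(u)}. The paper instead stays on the $x$-side and observes directly that $xg'(x)=x/h(x)=o\big(\int_{x_0}^x du/h(u)\big)=o(g(x))$ by the Karamata integral theorem for $\mathsf{RV}_{-1}$ integrands (since $h\in\mathsf{RV}_1$ and $g(x)\to\infty$). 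From $xg'(x)\le \tfrac12 g(x)$ for large $x$, together with $h(x)=o(x)$, one bounds $-\tfrac12 g(x)\le -2\log x + C$ and concludes by comparison with $\int x^{-2}\,dx$.

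In particular, the paper's argument uses only condition $(\rom1)$ of Definition~\ref{def:exp-slow-vary-distr}; the quantile-side structure in condition $(\rom2)$, which you exploit, is not needed for subexponentiality at all. Your detour does work, but what you call the ``principal technical obstacle'' (translating \eqref{eq:slowly-varying-subexp-quantile} into $x/h(x)\asymp S(x)/\zeta(S(x))$) is an obstacle you created by choosing this route: the one-line Karamata estimate $x/h(x)=o(S(x))$ delivers the same conclusion without ever mentioning $\zeta$. The monotonicity issue for $g'$ that you flag is real but minor, and the paper handles it in the same way (passing to an eventually increasing representative of $h$).
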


\vspace{1em}

\begin{proof}[Proof of Proposition \ref{prop:slowly-varying-subexp-implies-subexponentiality}] 
\phantom{blank}

   We verify Pitman's criterion in Proposition \ref{prop:pitman-subexponentiality-criterion}. For all sufficiently large $x$, we note that
   $$
       g(x) = - \log \overline{H_\#} = \int_{x_0}^x \frac{du}{h(u)} 
       \quad \text{and} \quad 
       g^\prime (x) = \frac{1}{h(x)} \downarrow 0
   $$
because of $(\rom1)$ and $(\rom2)$ in Definition \ref{def:exp-slow-vary-distr}. Next, we use again that
 $h\in \text{RV}_1$ and the integral Karamata's theorem to claim that  
$$
x g^\prime (x) =\frac{x}{h(x)} = o \left(  \int_{x_0}^x \frac{du}{h(u)}  \right) = o \big (g(x)  \big ).
$$
Thus, for all $x \gg 0$, 
$$
 x g^\prime (x) - g(x) \leq -\frac{1}{2} g (x) = - \frac{1}{2} \int_{x_0}^x \frac{du}{h(u)}. 
$$
Since $h(x) = o(x)$,   we may assume that $h(x) <  x/4$ for all $x\gg 0$. Thus, for some constant $C>0$, it follows that 
$$
- \frac{1}{2} \int_{x_0}^x \frac{du}{h(u)} 
\leq  - \frac{1}{2} \int_{x_0}^x \frac{du}{u/4} + C = 
 -2 \log x + C
$$
for some constant $C>0$. Thus, the Pitman's criterion holds because  
$$
\int_{x_0}^\infty \exp\big( xg^\prime (x) - g(x) \big) d x  
\lesssim \int_{x_0}^\infty \frac{dx}{x^2} < \infty.
$$
\end{proof}

\vspace{1em}

Some properties of the  function $V_\#$ are extremely important in the sequel. We summarize them into the Proposition \ref{prop:properties-exp-slowly-varying-distr}. A few technical manipulations are placed ahead in the Lemma \ref{lemma:properties-exp-slowly-varying-distr}.

\vspace{1em}

\begin{lemma}
    \label{lemma:properties-exp-slowly-varying-distr}
\phantom{blank}

    Let $H$ be  as  in Definition \ref{def:exp-slow-vary-distr} and we follow the same notations therein. 
    \begin{itemize}
        \item[$(\rom1)$] For $x > x_0$,  $h \circ V_\# (x) =  x V_\# ^\prime (x)$.
        \item[$(\rom2)$] Let the function $L(x) \to \infty$ be  such that 
        \begin{equation}
        \label{eq:pertubation-growth-condition}
            \log L (x) \lesssim \frac{\log x}{ \zeta (\log x )}.
        \end{equation}
        Then, the function $\zeta(z)$ over $z\in[\log x , \log x +\log L(x)]$ is roughly constant, i.e.,
        \begin{equation}
            \label{eq:zeta-is-roughly-constant}
             \sup_{z \in [\log x, \log x + \log L(x)]} \zeta(z) \sim   \inf_{z \in [\log x, \log x + \log L(x)]}  \zeta(z) \sim \zeta(\log x) .
        \end{equation}
        
    \end{itemize}
\end{lemma}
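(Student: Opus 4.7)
For part $(\rom1)$ the plan is a one-line implicit differentiation. The function $K(y) \triangleq 1/\overline{H_\#}(y) = \exp\bigl(\int_{x_0}^{y} du/h(u)\bigr)$ is $C^1$ and strictly increasing from $(x_0,\infty)$ onto $(1,\infty)$, so $V_\# = K^{-1}$ is a genuine inverse and $K(V_\#(x)) = x$ for every $x>1$. Differentiating in $x$ and using $K'(y) = K(y)/h(y)$ would yield $K(V_\#(x)) V_\#'(x)/h(V_\#(x)) = 1$, which after substituting $K(V_\#(x)) = x$ is precisely $h(V_\#(x)) = x V_\#'(x)$. This step is routine and is not the main obstacle.

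For part $(\rom2)$ the plan is to invoke Karamata's Uniform Convergence Theorem (UCT) for regularly varying functions. First I would show that the interval $I_x \triangleq [\log x, \log x + \log L(x)]$, after rescaling by $1/\log x$, contracts to $\{1\}$: the lower-growth condition \eqref{eq:growth-condition-of-zeta(u)} forces $\zeta(\log x)/(\log\log x)^\delta \to \infty$, and in particular $\zeta(\log x)\to\infty$; combined with hypothesis \eqref{eq:pertubation-growth-condition} this yields $c(x) \triangleq \log L(x)/\log x \lesssim 1/\zeta(\log x) \to 0$, so eventually $I_x/\log x = [1, 1+c(x)] \subset [1,2]$.

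Next I would parametrize each $z \in I_x$ as $z = \lambda \log x$ with $\lambda \in [1, 1+c(x)]$. Karamata's UCT applied to $\zeta \in \mathsf{RV}_\alpha$ (e.g.\ Theorem 0.5 in \cite{resnick:1987:book}) would give
$$
\sup_{\lambda \in [1,2]} \left| \frac{\zeta(\lambda \log x)}{\zeta(\log x)} - \lambda^{\alpha} \right| \longrightarrow 0 \qquad (x\to\infty),
$$
while $\sup_{\lambda \in [1, 1+c(x)]} |\lambda^\alpha - 1| \to 0$ by finiteness of $\alpha$ and $c(x)\to 0$. A triangle-inequality estimate then yields $\sup_{z \in I_x} \zeta(z)/\zeta(\log x) \to 1$ and $\inf_{z \in I_x} \zeta(z)/\zeta(\log x) \to 1$ simultaneously, which is \eqref{eq:zeta-is-roughly-constant}. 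The hard part — and really the only subtle point in the lemma — is verifying that the growing width $\log L(x)\to\infty$ collapses under the rescaling $z\mapsto z/\log x$; this is precisely what the lower-growth bound on $\zeta$ in \eqref{eq:growth-condition-of-zeta(u)} buys, and once it is in hand the UCT delivers the rest immediately.
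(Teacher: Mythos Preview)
Your proposal is correct. Part~$(\rom1)$ is essentially identical to the paper's argument: the paper writes $\log x = \int_{x_0}^{V_\#(x)} du/h(u)$ and differentiates, which is exactly your implicit differentiation of $K(V_\#(x))=x$ in logarithmic form.

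For part~$(\rom2)$ you take a genuinely different, though closely related, route. The paper works by hand with the Karamata representation $\zeta(z)=C(z)\exp\bigl(\int_{z_0}^{z}\epsilon(u)/u\,du\bigr)$: it disposes of the $C$-factor trivially and then bounds the exponent by showing $\int_{\log x}^{\log x+\log L(x)}\epsilon(u)/u\,du$ is controlled by $\log\bigl(1+\log L(x)/\log x\bigr)\to 0$. You instead cite the Uniform Convergence Theorem for $\zeta\in\mathsf{RV}_\alpha$ on the compact $\lambda$-interval $[1,2]$ and combine it with $\lambda^\alpha\to 1$ uniformly on $[1,1+c(x)]$. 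Both arguments hinge on the same single observation---that $c(x)=\log L(x)/\log x\to 0$, which you correctly extract from \eqref{eq:growth-condition-of-zeta(u)} via $\zeta(\log x)\to\infty$ and hypothesis \eqref{eq:pertubation-growth-condition}. Your approach is cleaner and avoids unpacking the representation; the paper's approach is more self-contained and makes the dependence on the Karamata structure explicit. Neither buys anything the other does not.
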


\begin{proof}[Proof of Lemma \ref{lemma:properties-exp-slowly-varying-distr}]
\phantom{blank}

\textbf{Part (i).}
We note that that $\overline{H_\#}( V_\#(x) ) = 1/x$, therefore 
$$
 \log x  = \int_{x_0}^{V_\#(x)} \frac{du}{h(u)}.
 $$
The Inverse Function Theorem implies that $V_\#$ is differentiable. 
Taking derivatives on both sides yields the identity  $h \circ V_\# (x) =  x V_\# ^\prime (x)$.

\vspace{1em}

\textbf{Part (ii).} It suffices to show that 
$$
\frac{\zeta(\log x + z)}{\zeta(\log x)} \sim 1 \quad 
\text{uniformly for all } z \in [\log x , \log x +\log L(x)].
$$
Since $\zeta \in \mathsf{RV}_\alpha$, then $\zeta$ admits a Karamata representation
$$
\zeta(z)= C(z) \exp \left( \int_{z_0}^z \frac{\epsilon(u)}{u} du \right)
$$
with $\lim_{z\to \infty} C(z) \in (0,\infty) $ and $\lim_{z\to \infty}\epsilon(z) = \alpha$. Obviously, 
$$
\sup_{z \in [\log x , \log x +\log L(x)]}
\left| 
\frac{C(\log x + z)}{C(\log x)} - 1 
\right| 
\to 0.
$$
We are left to deal with the integral part.

\vspace{1em}

To this end, we note that there exists $c_1<0$ and $ c_2 >0$ such that $\epsilon(z) \in [c_1,c_2]$ for all $z\gg 0$. It follows that 
\begin{align*}
   & \sup_{z\in [\log x , \log x +\log L(x)]} \exp\left( \int_{\log x}^{z} \frac{\epsilon(u)}{u} du \right) \\
    \leq & \exp\left(  c_2 \int_{\log x}^{\log x + \log L(x)} \frac{1}{u} du \right) 
    =  \exp \left[ c_2 \log \left( \frac{\log x + \log L(x)}{\log x} \right) \right] \xlongrightarrow{x\to \infty} 1 .
\end{align*}
The limit is one because 
$$
\log \left( \frac{\log x + \log L(x)}{\log x} \right) \sim \frac{\log L(x)}{\log x } \lesssim \frac{1}{\zeta(\log x)} \to 0.
$$
Similarly, we obtain 
\begin{align*}
    \inf_{z\in [\log x , \log x +\log L(x)]} \exp\left( \int_{\log x}^{z} \frac{\epsilon(u)}{u} du \right) & \geq  \exp \left[ c_1 \log \left( \frac{\log x + \log L(x)}{\log x} \right) \right] \\
   & \xlongrightarrow{x\to \infty} 1 ,
\end{align*}
which finishes the proof.
\end{proof}

\vspace{1em}

\begin{proposition}
    \label{prop:properties-exp-slowly-varying-distr}
\phantom{blank}

     Let the  setup be the same as Lemma \ref{lemma:properties-exp-slowly-varying-distr}.
     
    \begin{itemize}
        \item[$(\rom1)$]  For the $\delta>0$ in Definitition \ref{def:exp-slow-vary-distr} $(\rom3.2)$,  
        $$
        V_\#(x)  \gtrsim  \exp\Big(   ( \log \log x)^{1+\delta} \big/ (1+\delta)\Big)  \to \infty.
        $$
        Hence, for any $c>0$, 
        $$
        \frac{ V_\#(x)}{(\log x)^c} \to \infty \quad \text{and} \quad \frac{h \circ V_\#(x)}{ (\log x)^c} \to \infty.
        $$
        
        \item[$(\rom2)$]  Let $\alpha_1 > \alpha_2 > 0$. Then, for the $\delta>0$ in Definitition \ref{def:exp-slow-vary-distr} condition $(\rom2)$
        and any $0<c < (1+\delta)^{-1} \log (\alpha_1/\alpha_2)$, 
        \begin{equation}
            \label{eq:quantile-ratio-lower-bound}
              \frac{V_\#(x^{\alpha_1})}{V_\#(x^{\alpha_2})} \asymp 
         \frac{h \circ V_\#(x^{\alpha_1})}{h \circ V_\#(x^{\alpha_2})}  \quad \text{and} \quad
          \frac{V_\#(x^{\alpha_1})}{V_\#(x^{\alpha_2})} 
         > \exp\big(  c (\log \log x)^\delta \big)
        \end{equation}
       
        \item[$(\rom3)$] Let $L(x) \to \infty$ satisfy \eqref{eq:pertubation-growth-condition}. Then, 
         \begin{equation}
           \limsup_{x\to \infty} \frac{V_\#(x L(x) )}{V_\#(x)} < \infty
           \quad \text{and} \quad 
            \frac{V_\#(x L(x) ) - V_\#(x) }{  h\circ V_\#(x) } \asymp \log L(x) . 
            \label{eq:pertubation-result;2.1}
        \end{equation}
    \end{itemize}
\end{proposition}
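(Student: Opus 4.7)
The common computational device across all three parts is the substitution $s=\log u$ in the Karamata-type expression (2.1), which converts every integral of the form $\int \zeta(\log u)/(u\log u)\,du$ into $\int \zeta(s)/s\,ds$ and thus reduces everything to the regularly varying function $\zeta$. The other recurrent tool is the identity from Lemma \ref{lemma:properties-exp-slowly-varying-distr}$(\rom1)$ combined with the differentiated form of (2.1), which gives the clean asymptotic
\[
    h\circ V_\#(x) = xV_\#^\prime(x) \asymp V_\#(x)\,\frac{\zeta(\log x)}{\log x}.
\]
I would prove the three claims in order, recycling these two observations throughout.

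\textbf{Part (i).} Starting from $\log V_\#(x)\sim\int_{z_0}^{x}\zeta(\log u)/(u\log u)\,du = \int_{\log z_0}^{\log x}\zeta(s)/s\,ds$ and using $\zeta(s)\gtrsim(\log s)^{\delta}$ (which follows from \eqref{eq:growth-condition-of-zeta(u)}), another substitution $t=\log s$ yields
\[
    \int^{\log x}\frac{\zeta(s)}{s}\,ds \gtrsim \int^{\log\log x} t^{\delta}\,dt \sim \frac{(\log\log x)^{1+\delta}}{1+\delta},
\]
which exponentiates to the desired lower bound. The two corollaries are immediate: since $(\log x)^{c}=\exp(c\log\log x)$ is dominated by $\exp((\log\log x)^{1+\delta}/(1+\delta))$, we obtain $V_\#(x)/(\log x)^{c}\to\infty$; and since $h\in\mathsf{RV}_1$ gives $h(y)\gtrsim y^{1-\epsilon}$ at large $y$ for any $\epsilon>0$, the same conclusion transfers to $h\circ V_\#$.

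\textbf{Part (ii).} The integral representation gives
\[
    \log\frac{V_\#(x^{\alpha_1})}{V_\#(x^{\alpha_2})} \sim \int_{\alpha_2\log x}^{\alpha_1\log x}\frac{\zeta(s)}{s}\,ds.
\]
Replacing $\zeta(s)$ by its lower bound $(\log s)^{\delta}$ and integrating via $t=\log s$ produces a difference $(\log(\alpha_1\log x))^{1+\delta}-(\log(\alpha_2\log x))^{1+\delta}$ whose leading term, by a one-term Taylor expansion in $(\log\alpha_i)/\log\log x$, is of order $(\log\log x)^{\delta}\log(\alpha_1/\alpha_2)$; this is stronger than the asserted bound $\exp(c(\log\log x)^{\delta})$ for any $c<(1+\delta)^{-1}\log(\alpha_1/\alpha_2)$. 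For the $\asymp$ relation between the $V_\#$ and the $h\circ V_\#$ ratios, I would invoke the displayed identity $h\circ V_\#(x)\asymp V_\#(x)\zeta(\log x)/\log x$ and observe that the additional factor
$$\frac{\zeta(\alpha_1\log x)/(\alpha_1\log x)}{\zeta(\alpha_2\log x)/(\alpha_2\log x)}\to\frac{\alpha_2}{\alpha_1}\left(\frac{\alpha_1}{\alpha_2}\right)^{\alpha}$$
is bounded above and below because $\zeta\in\mathsf{RV}_{\alpha}$.

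\textbf{Part (iii).} This is the one place where I would lean on Lemma \ref{lemma:properties-exp-slowly-varying-distr}$(\rom2)$: the hypothesis $\log L(x)\lesssim\log x/\zeta(\log x)$ is exactly \eqref{eq:pertubation-growth-condition}, so $\zeta$ is essentially constant on $[\log x,\log x+\log L(x)]$. Consequently,
\[
    \log\frac{V_\#(xL(x))}{V_\#(x)} \sim \zeta(\log x)\int_{\log x}^{\log x+\log L(x)}\frac{ds}{s} \sim \zeta(\log x)\cdot\frac{\log L(x)}{\log x},
\]
which is bounded above by the hypothesis on $L$, establishing the first claim. For the second, since the ratio $V_\#(xL(x))/V_\#(x)$ is bounded, I would use $e^{y}-1\asymp y$ on bounded positive $y$ to conclude
\[
    \frac{V_\#(xL(x))-V_\#(x)}{V_\#(x)} \asymp \frac{\zeta(\log x)\log L(x)}{\log x},
\]
then divide by $h\circ V_\#(x)/V_\#(x)\asymp\zeta(\log x)/\log x$; the $\zeta(\log x)/\log x$ factors cancel, yielding $\asymp\log L(x)$.

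\textbf{Main obstacle.} The routine integrations above are straightforward once one commits to the $s=\log u$ substitution; the genuinely delicate point is managing the tension between $\sim$ and $\asymp$. In particular, the derivative bound in \eqref{eq:slowly-varying-subexp-quantile} is only an $\asymp$ relation, so the step $V_\#^\prime(z)\asymp V_\#(z)\zeta(\log z)/(z\log z)$ cannot be upgraded to $\sim$, and this propagates through Parts (ii) and (iii). A second subtlety is that in Part (iii) one must verify that the range $[\log x,\log x+\log L(x)]$ really falls into the regime where Lemma \ref{lemma:properties-exp-slowly-varying-distr}$(\rom2)$ applies, which is essentially automatic from the definition but should be checked before the $\zeta(\log x)$ factor can be pulled out of the integral.
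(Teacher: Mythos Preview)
Your arguments for Parts~$(\rom1)$, $(\rom2)$, and the first half of Part~$(\rom3)$ are essentially the paper's: the same $s=\log u$ substitution, the same use of $\zeta(s)\gtrsim(\log s)^\delta$, and the same appeal to Lemma~\ref{lemma:properties-exp-slowly-varying-distr}$(\rom2)$ to pull $\zeta$ out of the integral.

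There is a genuine gap in the second half of Part~$(\rom3)$. From $V_\#(z)\sim v_0\exp(\int)$ you only get
\[
  \log\frac{V_\#(xL(x))}{V_\#(x)} \;=\; I + o(1), \qquad I \triangleq \int_{\log x}^{\log x+\log L(x)}\frac{\zeta(s)}{s}\,ds \sim \frac{\zeta(\log x)\log L(x)}{\log x},
\]
not $\log(\text{ratio})\sim I$. The hypothesis on $L$ forces $I$ to be \emph{bounded}, but nothing prevents $I\to 0$: for instance $L(x)=\log x$ with $\zeta(s)=s^{1/2}$ satisfies \eqref{eq:pertubation-growth-condition} yet gives $I\asymp(\log x)^{-1/2}\log\log x\to 0$. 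In that regime the additive $o(1)$ can swamp $I$, so your chain $e^{y}-1\asymp y\asymp I$ fails at the second step, and the conclusion $\asymp\log L(x)$ does not follow. Your ``main obstacle'' paragraph correctly flags a $\sim$ versus $\asymp$ tension, but locates it in the derivative bound rather than here.

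The paper sidesteps this by never exponentiating: it uses the \emph{exact} identity $h\circ V_\#(u)=uV_\#^\prime(u)$ from Lemma~\ref{lemma:properties-exp-slowly-varying-distr}$(\rom1)$ to write
\[
  V_\#(xL(x))-V_\#(x) \;=\; \int_{x}^{xL(x)} \frac{h\circ V_\#(u)}{u}\,du,
\]
then uses the first half of $(\rom3)$ together with monotonicity of $V_\#$ and $h\in\mathsf{RV}_1$ to conclude $h\circ V_\#(u)\asymp h\circ V_\#(x)$ uniformly on $[x,xL(x)]$, whence the integral is $\asymp h\circ V_\#(x)\int_x^{xL(x)}du/u = h\circ V_\#(x)\log L(x)$. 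This avoids any $o(1)$ competing with a possibly vanishing main term.
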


\begin{proof}[Proof of Proposition \ref{prop:properties-exp-slowly-varying-distr}]
\phantom{blank}

\textbf{Part ($\rom{1}$).}  For all $x \gg 0$, it follows that 
\begin{align*}
    V_\# (x) 
    \gtrsim \exp \left(  \int_ e ^ x \frac{(\log \log u)^{\delta}}{u \log u} du \right) \gtrsim  \exp \left(  \frac{1}{1+\delta} (\log \log x)^{1+\delta}  \right). 
\end{align*}
Thus, $V_\# (x) / (\log x)^c \to \infty$ for any $c>0$. So is $h\circ V_\# (x)$ because $h \in \mathsf{RV}_1$.

\vspace{1em}

\textbf{Part ($\rom2$).} 
By Lemma \ref{lemma:properties-exp-slowly-varying-distr} $(\rom1)$ , it follows that 
$$
\frac{h\circ V_\#(x^{\alpha_1})}{h\circ V_\#(x^{\alpha_2})} 
= \frac{V_\#(x^{\alpha_1})}{ V_\#(x^{\alpha_2})} \cdot \frac{\zeta(\alpha_1 \log x)}{\zeta(\alpha_2 \log x)}  \cdot \frac{\alpha_2 \log x}{\alpha_1 \log x} \asymp \frac{V_\#(x^{\alpha_1})}{ V_\#(x^{\alpha_2})}
$$
because $\zeta$ is regularly varying. For $x\gg 0$,  Definition \ref{def:exp-slow-vary-distr} $(\rom2)$ implies that
\begin{align*}
      \frac{V_\#(x^{\alpha_1})}{V_\#(x^{\alpha_2})}  \sim &
      \exp 
\left(
\int_{ x^{\alpha_2} } ^ { x^{\alpha_1}} \frac{\zeta(\log u)}{u \log u} du  
\right) \\
\gtrsim & 
 \exp \left(  \frac{1}{1+\delta} (\log \log x)^{1+\delta}  \, \bigg|_{ x^{\alpha_2} } ^ { x^{\alpha_1}} \right)
 \geq \exp \left(  c (\log \log x)^{\delta}  \right).
\end{align*}

\vspace{1em}

\textbf{Part ($\rom3$).}
For the first part of \eqref{eq:pertubation-result;2.1},
 note that 
\begin{equation*}
    \frac{V_\#(x L(x) )}{V_\#(x)}  
    \sim \exp \left(  
   \int_{\log x} ^ { \log x + \log L (x) } \frac{\zeta(u)}{u} du 
    \right), \quad x \gg 0.
\end{equation*}
By \eqref{eq:zeta-is-roughly-constant}, it follows that 
\begin{equation*}
    \int_{\log x} ^ { \log x + \log L (x) } \frac{\zeta(u)}{u} du 
\sim    \zeta(\log x ) \int_{\log x} ^ { \log x + \log L (x) } \frac{du}{u} 
\sim  \frac{\zeta(\log x) \log L(x)}{ \log x} = O(1),
\end{equation*}
and, hence, $V_\#(x L(x) ) \asymp V_\#(x)$.

\vspace{1em}

For the second part of \eqref{eq:pertubation-result;2.1},
apply first Lemma \ref{lemma:properties-exp-slowly-varying-distr} $(\rom1)$ to derive that
$$
V_\#(xL(x)) - V_\#(x) = \int_{x} ^ {x L(x)} V^\prime_\#(u) du = 
 \int_{x} ^ {x L(x)} \frac{h\circ V_\#(u) }{u}.
$$
The argument of $V_\#(x L(x) ) \asymp V_\#(x)$ obviously leads to 
$$
\sup_{u \in [x, xL(x)]} V_\# (u )   \asymp \inf_{u \in [x, xL(x)]} V_\#( u )  \asymp V_\# (x).
$$
Since $h\in \mathsf{RV}_1$ by Definition \ref{def:exp-slow-vary-distr} $(\rom2)$, then we immediately obtain that 
$$
\frac{V_\#(xL(x)) - V_\#(x)}{h\circ V_\#(x)} 
 \asymp \int_x^{xL(x)} \frac{du}{u}  = \log L(x).
$$

\end{proof}


\section{Random Sup-Measures}
\label{sec:random-sup-measures-and-extremal-processes}

\subsection{Random Closed Sets}
\label{subsec:random-closed-sets}

\subsubsection{Basics}

Denote by $E$ the topological space $[0,1]$ or $[0,\infty)$.  
We use the notation $\mathcal{B} =\mathcal{B} (E) $, $\mathcal{G} =\mathcal{G} (E) $, $\mathcal{F} = \mathcal{F}(E)$ and $\mathcal{K}= \mathcal{K}(E)$ for the families of Borel, open, closed and compact sets of $E$, respectively. For either choice of $E$, the \textit{Fell} topology $\mathcal{B}(\mathcal{F})$ is generated by the subbasis
\begin{align*}
& \mathcal{F}_G:=  \{ F \in \mathcal{F}: F \cap G \neq \emptyset  \}, \quad G\in \mathcal{G}; \\
& \mathcal{F}^K : =  \{ F \in \mathcal{G}: F \cap K = \emptyset \} , \quad 
K \in \mathcal{K}.
\end{align*}
The topological space $(\mathcal{F}, \mathcal{B}(\mathcal{F}))$ is metrizable and compact, see \cite{molchanov:2017:TRS-book} \S 1.1.1. 

\vspace{1em}

A \textit{random closed set} $F$ is a measurable mapping  from a probability space to  $(\mathcal{F}, \mathcal{B}(\mathcal{F}))$. For $E=[0,\infty)$, we say $R$ is \textit{stationary} if $$ (F-r) \cap [0,\infty)  \eid F, \quad \forall \, r \geq 0.  $$    
Let $\mathfrak{I}$ be the collection of all finite unions of open intervals and $\mathfrak{C}_F$ be the collection of all continuity sets of $F$, i.e.\
\begin{align*}
   \mathfrak{C}_F = \big \{ B \in \mathcal{B}: & \, \text{closure } B \in \mathcal{K}, \\
   & \, \PP ( F \cap  \text{closure } B \neq \emptyset) = \PP ( F \cap  \text{interior } B \neq \emptyset)
   \big \}. 
\end{align*}
A sequence of random closed sets $(F_n:n\in \bbn)$ weakly converges to $F$ if 
\begin{equation}
    \label{eq:weak-convergence-of-random-closed-set}
    \PP (F_n \cap I \neq \emptyset ) \to 
    \PP(F \cap I \neq \emptyset ), 
    \quad \forall \, I \in \mathfrak{I} \cap \mathfrak{C}_F.
\end{equation}

\subsubsection{Stable Subordinators and Stable Regenerative Sets}
\label{subsubsec:subordinator-and-regenerative-sets}

We introduce an important family of random closed sets on $E=[0,\infty)$.
For each $\beta \in (0,1)$, let $( \mathscr{Y}_\beta (t): t \in \reals_+ )$ denote the \textit{standard} $\beta$-\textit{stable subordinator}. It is an strictly increasing L\'evy process such that the margin  
$\mathscr{Y}_\beta (1)$ has the $S_\beta(1,1,0)$ distribution,  whose Laplace transform is
$$
\EE e^{- \gamma \mathscr{Y}_\beta (1) } = \exp \left(  
-  \frac{\gamma^\alpha}{\cos (\pi \beta / 2)} \right),
$$
see \cite{samorodnitsky:taqqu:1994:book} Proposition 1.2.12.
Denote by $\mathsf{Range} ( \mathscr{Y}_\beta) = \{   \mathscr{Y}_\beta (t): t \in \reals_+    \}$ the range of the subordinator. Its closure 
\begin{equation}
    \label{eq:stable-regenerative-set-positive-real-axis}
    \overline{ \mathsf{Range}( \mathscr{Y}_\beta)  } = \mathsf{closure} (\mathsf{Range}( \mathscr{Y}_\beta))
\end{equation}
is thus a random closed set. This is the $\beta$-\textit{stable regenerative set}  on the positive real axis $\reals_+$.
Trivially, $0 \in \overline{ \mathsf{Range}(\mathscr{Y}_\beta)  }$ with probability one. In addition, almost surely,  
\begin{itemize}
    \item it is scale invariant, i.e.\ $c\overline{ \mathsf{Range}( \mathscr{Y}_\beta)  } \eid \overline{ \mathsf{Range}( \mathscr{Y}_\beta)  }$ for any $c >0$,
    \item its Hausdorff dimension equal to $\beta$,
\end{itemize}
see \S \Rom{3}.5 in \cite{bertoin:1996:book}. 

\vspace{1em}

In the sequel, we shall also need the inverse process of $\beta$-stable subordinator,
\begin{equation}
    \label{eq:Mittag-Leffler-process}
    \mathscr{Z}_\beta (t) \triangleq  \mathscr{Y}_\beta ^\leftarrow (t), \quad t\in \reals_+.
\end{equation}
This process $(\mathscr{Z}_\beta(t):t \in \reals_+)$ is called a \textit{Mittag-Leffler} process, whose self-similar index is $\beta$. Furthermore, because stable subordinators are strictly increasing, $(\mathscr{Z}_\beta(t):t \in \reals_+)$ is therefore a.s.\ continuous. 

\subsection{Random Sup-Measures}
\label{subsec:random-sup-measures}

\subsubsection{Basics}
\label{subsubsec:basics-random-sup-measures}

We follow the notations in \S \ref{subsec:random-closed-sets} to introduce (random) sup-measures. These notions were first studied in 
\cite{obrien：torfs:vervaat:1990:article} and are very useful in the extreme value theory of stationary processes, see  \cite{molchanov:strokorb:2016:article}, \cite{wang:2022:article} and the literatures therein. We adapt our materials from \cite{chen:samorodnitsky:2022:article} and Appendix G in \cite{molchanov:2017:TRS-book}.


\paragraph{Sup-Measures}

A \textit{sup-measure} $m$ on $E$ is a measurable mapping $m:\mathcal{G} \to \overline{\reals}=[-\infty,\infty]$ such that 
$$
m(\emptyset) = - \infty \quad \text{and} \quad 
m\left( \bigcup_{\alpha \in \mathcal{A}} G_\alpha  \right) = \sup_{\alpha \in \mathcal{A}} m (G_\alpha)
$$
for any arbitrary collection $(G_\alpha: \alpha \in \mathcal{A})$ of open sets. 
The \textit{sup-derivative} $d^\vee m: E \to \overline{\reals}$ is given by 
$
d^\vee m (t) = \inf_{G \ni t} m (G),
$
which is automatically an upper semi-continuous function. Conversely, for any $\overline{\reals}$-valued function $f$, the \textit{sup-integral} $i^\vee f$ is defined as 
$
i^\vee f (G) = \sup_{t\in G} f(t),
$
which is always a sup-measure. 
    Because $m= i^\vee d^\vee m$, we can thus extend the domain of $m$ from $\mathcal{G}$ to $\mathcal{B}$ through
    $$
    m(B) = \sup_{t \in B} d^\vee m (t), \quad \forall \, B \in \mathcal{B}.
    $$

\vspace{1em}

Let $\mathsf{SM}=\mathsf{SM}(E)$ be the collection of all sup-measures on $E$. A sequence of sup-measures $m_n, n\in \bbn$ converges to a sup-measure $m$ in the \textit{sup-vague} topology if 
\begin{align*}
 \limsup_{n\to \infty} m_n (K) \leq m(K) , \; \forall \, K \in \mathcal{K}  \quad \text{and} \quad 
   \liminf_{n\to \infty} m_n (G) \geq m(G) , \; \forall \, G \in \mathcal{G}.
\end{align*}
 The space $\mathsf{SM}$, Under the sup-vague topology, is compact and metrizable. 

\paragraph{Random Sup-Measures}

A \textit{random sup-measure} $M$ is a measurable mapping from a probability space to $\mathsf{SM}$.  For $E=[0,\infty)$, a random sup-measure $M$ on $E$ is \textit{stationary} if 
$$
M (r + \cdot) \eid M(\cdot), \quad \forall \, r > 0.
$$
Regardless of the choice of $E$, 
the family $\mathscr{I}(M)$ of \textit{continuity intervals} is defined as 
$$
\mathscr{I}(M) = \big\{ I \subset E  \text{ is an open interval such that } M(I)  \xlongequal{a.s.} M\big(\mathsf{closure}(I) \big)  \big \}
$$
A sequence of random sup-measures $M_n$ converges weakly to  $M_\infty$ if and only if
\begin{equation}
    \label{eq:weak-convergence-criterion-continuity-intervals}
    \big(M_n (I_1), \ldots, M_n(I_k)   \big ) \Rightarrow \big( M_\infty (I_1), \ldots, M_\infty(I_k)   \big )
\end{equation}
for any arbitrary (finite) collection of disjoint open intervals $I_1,\ldots, I_k \in \mathscr{I}(M)$.

\subsubsection{Limit Random Sup-Measures}
\label{subsubsec:limit-random-sup-measure}

\paragraph{Construction and Properties}

For all $\beta \in (0,1)$, consider a Poisson point process $\mathscr{P}$ on $\reals \times \reals_+ \times \mathcal{F}(\reals_+)$ with intensity measure 
\begin{equation}
    \label{eq:limit-rsm-poissonPP-intensity-measure}
    e^{-u} du \times (1-\beta) q^{-\beta} \times dP_\beta,
\end{equation}
where $P_\beta$ is the law of the stable regenerative set  in \eqref{eq:stable-regenerative-set-positive-real-axis}. 
Let $(U_j, Q_j, Z_j)$ be a measurable enumeration of the Poisson point process $\mathscr{P}$. Denote by 
$$
W_j= Q_j + Z_j, \quad j \in \bbn
$$
shifted stable regenerative sets. When we consider finite many $W_j$'s and their intersection,  a nice zero-one dichotomy holds.

\vspace{1em}

\begin{lemma}[Lemma 3.1 and (3.5) in  \cite{samorodnitsky:wang:2019:article}]
    \label{lem:zero-one-dichotomy-srs-intersections}
    \phantom{blank}

\begin{equation}
    \label{eq:zero-one-dichotomy-srs-intersections}
    \PP\left( \bigcap_{i=1}^{s} W_{i}   \neq \emptyset \right) = 
    \begin{cases}
    1,  \quad & s \leq \ell_\beta \\
     0,   & s > \ell_\beta 
    \end{cases}, \quad \ell_\beta =  \max\left \{  \ell \in \bbn : \ell < \frac{1}{1-\beta} \right \}.
\end{equation}
Whenever $s \leq \ell_\beta$, the intersection $ \bigcap_{i=1}^{s} W_i $ is a randomly shifted $(s \beta -s + 1)$-stable regenerative set.
\end{lemma}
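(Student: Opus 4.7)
The plan is to establish the lemma by induction on $s$, using at each step the classical Hawkes intersection theorem for stable regenerative sets together with the strong renewal property of stable subordinators to handle the non-aligned origins.

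For the base case $s=1$, the set $W_1 = Q_1 + Z_1$ is, by construction, a shifted $\beta$-stable regenerative set; the claim is vacuous since $1 < 1/(1-\beta)$ for every $\beta \in (0,1)$, so $\ell_\beta \geq 1$. For the inductive step, suppose the claim holds for $s-1$ and set $R \triangleq \bigcap_{i=1}^{s-1} W_i$. If $s-1 > \ell_\beta$, then $R = \emptyset$ a.s.\ and the $s$-fold intersection is immediately empty. If $s-1 \leq \ell_\beta$, then $R$ is a.s.\ a shifted $\beta^\prime$-stable regenerative set with $\beta^\prime = (s-1)\beta - (s-2)$, and it is independent of $W_s = Q_s + Z_s$ by the Poisson structure of $\mathscr{P}$.

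To evaluate $R \cap W_s$, I would fix a threshold $T^\ast$ exceeding both left endpoints of $R$ and $W_s$, and apply the strong renewal property of each set at $T^\ast$: the portions past $T^\ast$ are, after shifting by their respective first hits, independent stable regenerative sets of indices $\beta^\prime$ and $\beta$, both starting at $0$. Hawkes' intersection theorem (see e.g.\ \cite{bertoin:1996:book} \S \Rom{3}.5) then shows that the synchronized intersection is a.s.\ empty whenever $\beta^\prime + \beta - 1 \leq 0$, and is otherwise a shifted $(\beta^\prime + \beta - 1)$-stable regenerative set. Since $\beta^\prime + \beta - 1 = s\beta - s + 1$, the threshold $s\beta - s + 1 > 0$ is precisely $s < 1/(1-\beta)$, i.e.\ $s \leq \ell_\beta$, which closes the induction.

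The main obstacle is the synchronization step, which is required because Hawkes' theorem applies to regenerative sets sharing a common origin, whereas $R$ and $W_s$ have independent random starting points. Two successive invocations of the strong renewal property, combined with the scale invariance of $\beta$-stable regenerative sets, are needed to align them at a common point; one must also verify that the resulting offsets at the synchronized origin are almost surely finite. A secondary subtlety is the clean propagation of both the zero-one dichotomy and the identification of the limiting stable index through the induction, but this is automatic from Hawkes' theorem, which delivers both pieces of information at each step.
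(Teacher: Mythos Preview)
The paper does not supply its own proof of this lemma; it is quoted verbatim as Lemma~3.1 and~(3.5) of \cite{samorodnitsky:wang:2019:article} and used as a black box. There is therefore no ``paper's proof'' to compare against.

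That said, your proposal is the standard route to this result and matches the argument in the cited reference: induction on~$s$, with the inductive step driven by Hawkes' intersection theorem for independent stable regenerative sets. The only genuinely nontrivial point is the one you flag yourself, namely the synchronization of the two shifted sets $R$ and $W_s$ at a common regeneration time before Hawkes' theorem can be invoked. Your sketch of this step (pick a level beyond both starting points, apply the strong renewal property to each set separately, then intersect) is correct in spirit; in practice one either conditions on the random shifts $Q_1,\ldots,Q_s$ and applies the regenerative property at a deterministic level exceeding their maximum, or argues via the alternating first-passage construction that underlies Hawkes' theorem directly. Either way the offsets are a.s.\ finite because each $Q_i$ is a.s.\ finite and stable subordinators have finite first-passage times over any level, so the step goes through. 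The identification of the resulting index as $s\beta - s + 1$ and the translation of the sign condition into $s \leq \ell_\beta$ are exactly as you state.
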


\vspace{1em}

\begin{proposition}[Limit Random Sup-Measure]
\label{proposition:limit-rsm-stationary-self-affine}
\phantom{blank}

\begin{itemize}
    \item[$(\rom1)$] Define
 a random  function $\eta:\reals_+ \to \overline{\reals}$ as
\begin{equation}
    \label{eq:limit-rsm-sup-derivative-eta}
    \eta (t) = 
\begin{cases}
    \sum_{j=1}^\infty U_j  \one(t\in W_j), \quad & \text{if } \sum_{j=1}^\infty   \one(t\in W_j) = \ell_\beta \\
    -\infty , &\text{otherwise}
\end{cases}.
\end{equation}
      Then, a.s.,  $\eta$  is  well-defined and upper semi-continuous.

   \item[$(\rom2)$] Let $ \mathcal{M}_\beta$ be the random sup-measure given by 
   \begin{equation}
    \label{eq:limit-rsm-on-half-line}
    \mathcal{M}_\beta(B) = \sup_{t\in B} \eta (t), \quad B \in \mathcal{B}(\reals_+).
\end{equation} 
Then,
\begin{itemize}
     \item[$(\rom2.1)$] $ \mathcal{M}_\beta$ is stationary, $\mathcal{M}(\cdot + r) \eid \mathcal{M}(\cdot)$ for any $r\geq 0$;
    \item[$(\rom2.2)$] $ \mathcal{M}_\beta$ is self-affine, $\mathcal{M}(a \cdot) \eid \mathcal{M}(\cdot)+ (1-\beta) \log a$ for any  $a > 0$.
\end{itemize}

\end{itemize}

\end{proposition}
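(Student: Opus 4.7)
The plan is to reduce both parts to properties of the underlying Poisson point process $\mathscr{P}$, leveraging (a) local finiteness of the $Q_j$'s, (b) the zero--one dichotomy of Lemma \ref{lem:zero-one-dichotomy-srs-intersections}, and (c) the self-similarity and regenerative structure of the stable regenerative sets $Z_j$. For part $(\rom1)$, I would first argue that $\eta$ is a.s.\ well-defined: the $Q_j$'s form a PPP on $\reals_+$ with intensity $(1-\beta)q^{-\beta}dq$, which is integrable on every bounded interval, so a.s.\ only finitely many $W_j=Q_j+Z_j$ intersect any compact set (each $W_j\subset [Q_j,\infty)$). Applying Lemma \ref{lem:zero-one-dichotomy-srs-intersections} to every $(\ell_\beta+1)$-subset of indices---a countable union of null events---gives that a.s.\ no $t\in \reals_+$ lies in more than $\ell_\beta$ of the $W_j$'s, so $\eta(t)$ is either a finite sum of $\ell_\beta$ reals or $-\infty$. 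For upper semi-continuity, take $t_n\to t$ with $\eta(t_n)\to L>-\infty$: local finiteness lets me pass to a subsequence along which a fixed tuple $j_1,\dots,j_{\ell_\beta}$ of indices witnesses $\eta(t_n)$, closedness of each $W_{j_i}$ forces $t\in\bigcap_i W_{j_i}$, and the dichotomy forbids any further $W_j$ containing $t$, so $\eta(t)=\sum_i U_{j_i}=L$.

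For stationarity $(\rom2.1)$, since $\mathcal{M}_\beta$ is a deterministic functional of $\eta$, the task reduces to $\eta(\cdot+r)\eid \eta(\cdot)$ as random upper semi-continuous functions on $\reals_+$. My plan is to show that the shift-and-cut map $F\mapsto (F-r)\cap\reals_+$ preserves the intensity measure of $\{W_j\}$ on $\mathcal{F}(\reals_+)$. Splitting by whether $Q_j\geq r$ or $Q_j<r$: the first contribution is handled by a direct change of variables; the second uses the strong Markov (regenerative) property of $Z_j$ at the first hitting time $\sigma$ of $[r-Q_j,\infty)$, so that $(Z_j-\sigma)\cap\reals_+$ is an independent copy of $Z_j$ shifted by the overshoot. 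Combining the two contributions reproduces the original intensity, which is the identity underpinning the analogous statement in \cite{samorodnitsky:wang:2019:article}.

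For self-affinity $(\rom2.2)$, since $at\in W_j\Longleftrightarrow t\in Q_j/a+Z_j/a$, the process $\eta(a\cdot)$ equals the $\eta$-functional of the PPP $\mathscr{P}^{(a)}=\{(U_j,Q_j/a,Z_j/a)\}$. By self-similarity $Z_j/a\eid Z_j$ and the change-of-variables formula for PPP intensities, $\mathscr{P}^{(a)}$ has intensity $e^{-u}du\otimes(1-\beta)a^{1-\beta}q^{-\beta}dq\otimes dP_\beta$, differing from the intensity of $\mathscr{P}$ only by a factor $a^{1-\beta}$ in the $q$-marginal. This factor is absorbed by translating the $u$-coordinate by $(1-\beta)\log a$, since such a shift multiplies $e^{-u}du$ by $e^{(1-\beta)\log a}=a^{1-\beta}$. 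The resulting identity of Poisson laws then propagates the additive shift of $(1-\beta)\log a$ through to $\mathcal{M}_\beta$.

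The hard part will be the intensity-preservation step of stationarity: verifying directly that the mass lost from the $\{Q_j<r\}$ part of the Poisson process, after accounting for the overshoot distribution of the $\beta$-stable subordinator, exactly fills in the shifted intensity $(1-\beta)q^{-\beta}dq$ on $(0,r)$ requires a careful computation rather than any abstract invariance argument.
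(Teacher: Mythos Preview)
Your approach to part $(\rom1)$ is correct and close in spirit to the paper's, though the paper argues upper semi-continuity via level sets rather than sequences: it observes that $U_j\to-\infty$ a.s.\ (since $\int_0^\infty e^{-u}\,du<\infty$), so for each real $a$ only finitely many ordered $\ell_\beta$-tuples $(j_1,\ldots,j_{\ell_\beta})_<$ satisfy $\sum_i U_{j_i}\geq a$, whence $\{\eta\geq a\}$ is a finite union of the closed sets $\bigcap_i W_{j_i}$. Your local-finiteness route (only finitely many $W_j$ meet each compact because $\int_0^K (1-\beta)q^{-\beta}dq<\infty$) is an equally valid way to force the pigeonhole extraction of a constant index tuple; both hinge on Lemma~\ref{lem:zero-one-dichotomy-srs-intersections}.

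For part $(\rom2)$ the paper simply cites \cite{chen:samorodnitsky:2022:article}, so your outline is considerably more detailed than what appears here. Your stationarity plan via the mapping theorem for Poisson processes, splitting on $\{Q_j\geq r\}$ versus $\{Q_j<r\}$ and invoking the regenerative property of $Z_j$ at its first passage above $r-Q_j$, is the standard route and is what underlies the analogous statement in \cite{samorodnitsky:wang:2019:article}; the overshoot computation you flag as ``hard'' is exactly the content of the known shift-invariance of the point process $\{Q_j+Z_j\}$.

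There is one genuine slip in your self-affinity step. You correctly show that the rescaled point process $\{(U_j,Q_j/a,Z_j/a)\}$ differs from $\mathscr{P}$ only by the factor $a^{1-\beta}$ in intensity, and that this factor is absorbed by translating the $u$-coordinate by $(1-\beta)\log a$. But this shift does \emph{not} propagate unchanged to $\mathcal{M}_\beta$: on its support, $\eta(t)$ is a sum of exactly $\ell_\beta$ of the $U_j$'s, so shifting every $U_j$ by $c$ shifts $\eta$ by $\ell_\beta c$. Carried out carefully, your argument therefore yields
\[
\mathcal{M}_\beta(a\,\cdot)\ \eid\ \mathcal{M}_\beta(\cdot)+\ell_\beta(1-\beta)\log a,
\]
which is also what the normalization $b_n=mV(w_n)+V(\vartheta_n)$ in Theorem~\ref{main-theorem:random-sup-measure-convergence} forces (since $m=\ell_\beta$ in the regime \eqref{eq:local-range-of-beta} and $(b_{na}-b_n)/a_n\to m(1-\beta)\log a$). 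The constant written in $(\rom2.2)$ coincides with this only when $\ell_\beta=1$, i.e.\ in the range $\beta\in(0,1/2)$ treated in \cite{chen:samorodnitsky:2022:article}; your computation, done correctly, recovers the right constant in general.
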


\vspace{1em}

\begin{proof}[Proof of Proposition \ref{proposition:limit-rsm-stationary-self-affine}]
    \phantom{blank}

      \textbf{Part (i).}  Note that $\int_0^\infty e^{-u}du =1$ and $\int{-\infty}^0 e^{-u}du =\infty$, thus $U_j \stackrel{a.s.}{\longrightarrow} -\infty$. It follows that 
       $\eta$ is well-defined because each $t$ belongs to at most $\ell_\beta$ many random closed sets $W_j$'s, see \eqref{eq:zero-one-dichotomy-srs-intersections}. And, almost surely, the collection 
      $$
      \left\{  (j_1,\ldots, j_{\ell_\beta})_< \in \bbn^{\ell_\beta} : \bigcap_{i=1}^{\ell_\beta} W_{j_i} \neq \emptyset   , \sum_{i=1}^{\ell_\beta} U_{j_i} \geq a \right\}
      $$
      is finite for each $a\in \overline{\reals}$. Hence,  $\eta$ is upper semi-continuous because $\{ t\in \reals : \eta(t) \geq a \}$ is closed.

     \vspace{1em}

      \textbf{Part (ii).}  See (2.35) and (2.36) in \cite{chen:samorodnitsky:2022:article}.
  
\end{proof}

\paragraph{An Restricted Representation}

Given a stationary sequence $(X_t: t \in \bbz)$, it naturally induces a family of random sup-measures $(\mathcal{M}_n: n\in \bbn)$ as 
\begin{equation}
   \label{eq:stationary-sequence-empirical-rsm}
   \mathcal{M}_n (B) = \max_{t\in nB} X_t, \quad B \in \mathcal{B}(\reals).
\end{equation}
Assume that we have the weak convergence 
\begin{equation}
    \label{eq:weak-convergence-empirical-rsm}
    \frac{\mathcal{M}_n- b_n}{a_n} \Rightarrow \mathcal{M}_\infty
\end{equation}
for some normalization constants $a_n, b_n$ and limit random sup-measure $\mathcal{M}_\infty$. Then,  $\mathcal{M}_\infty$ must be stationary and self-affine, see \cite{obrien：torfs:vervaat:1990:article} Theorem 6.1. Therefore, it suffices to reduce  weak convergence  \eqref{eq:weak-convergence-empirical-rsm}  from $\mathsf{SM}([0,\infty))$ to $\mathsf{SM}([0,1])$ . 

\vspace{1em}

We now restrict the domain of $\mathcal{M}_\beta(\cdot)$ in \eqref{eq:limit-rsm-on-half-line} to $\mathcal{B}([0,1])$ and still denote it by $\mathcal{M}_\beta$ for convenience. A more intuitive Poisson point representation will appear. With repetitive use of notations, let us enumerate the point processes $\mathscr{P}$  in  $\reals \times [0,1] \times \mathcal{F}([0,1])$. 
\begin{itemize}
    \item let $(\Gamma_j : j \in \bbn)$ be arrival times of a unit rate Poisson process;
    \item let $(Q_j : j \in \bbn)$ be i.i.d.\ random variables with distribution 
    $$\PP(Q_1 \leq q ) = q^{1-\beta}, \quad q \in [0,1];$$
    \item let $(Z_j : j \in \bbn)$ be i.i.d.\ copies of the stable regenerative set in \eqref{eq:stable-regenerative-set-positive-real-axis}, and define i.i.d.\ random closed sets 
    \begin{equation}
        \label{eq:stationary-srs-unit-interval}
         R_j = (Q_j +Z_j ) \cap [0,1], \quad \forall \, j \in \bbn. 
    \end{equation}
 \end{itemize}   
Assume that the three families $(\Gamma_j)_{j \in \bbn}$,$(Q_j)_{j \in \bbn}$ and  $(Z_j)_{j \in \bbn}$ are independent, then we can rewrite the sup-derivative as
\begin{equation}
\label{eq:rsm-restricted-sup-derivative-eta}
    \eta (t) = 
\begin{cases}
    \sum_{j=1}^\infty -\log \Gamma_j  \one(t\in R_j), \quad & \text{if }  \sum_{j=1}^\infty \one(t\in R_j) = \ell_\beta \\
    -\infty , &\text{otherwise}
\end{cases}, \quad t \in [0,1].
\end{equation}
Then restricted random sup-measure has the representation 
\begin{equation}
    \label{eq:limit-rsm-restriction-unit-interval}
    \mathcal{M}_\beta (B) = \sup_{t\in B} \eta (t), \quad B \in \mathcal{B}([0,1]).
\end{equation}
After the restriction, we obtain a different form of the
the dichotomy \eqref{eq:zero-one-dichotomy-srs-intersections}.

\vspace{1em}

\begin{lemma}[Corollary B.3 in \cite{samorodnitsky:wang:2019:article}]
\label{lem:zero-positive-dicotomy}
    \phantom{blank}

    Set $\beta_s = s\beta - s+1$, then
\begin{equation}
    \label{eq:zero-one-dichotomy-srs-intersections-unit-interval}
   \PP\left( \bigcap_{j=1}^{s} R_{j}   \neq \emptyset \right) = 
    \begin{cases}
     \big[\Gamma(\beta) \Gamma(2-\beta) \big]^s \Big/  \Gamma(\beta_s) \Gamma(2-\beta_s)>0,  \quad  & s \leq \ell_\beta \\
     0,     & s > \ell_\beta 
    \end{cases}.
\end{equation}
Furthermore, for $s \leq \ell_\beta$  
\begin{equation}
    \label{eq:[0,1]-srs-intersections-first-time}
 \PP\left( \min \bigcap_{j=1}^{s} R_{j} \leq q \, \bigg| \,   
 \bigcap_{j=1}^{s} R_{j} \neq \emptyset \right) = q^{1-\beta_s}, \quad q\in [0,1].
\end{equation}
\end{lemma}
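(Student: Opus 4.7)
The strategy is to interpret each $R_j = (Q_j + Z_j) \cap [0,1]$ as the intersection with $[0,1]$ of a \emph{stationary} $\beta$-stable regenerative set on $[0,\infty)$. The density $(1-\beta) q^{-\beta}$ of $Q_j$ is designed precisely so that, conditional on $R_j \neq \emptyset$, the minimum of $R_j$ has the Palm (first-entry) distribution of such a stationary regenerative set, namely $q^{1-\beta}$ on $[0,1]$. This identification reduces our problem to studying intersections of $s$ independent stationary $\beta$-stable regenerative sets.

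For the dichotomy in \eqref{eq:zero-one-dichotomy-srs-intersections-unit-interval}, the case $s > \ell_\beta$ is immediate: since $R_j \subset W_j$, we have $\bigcap_{j=1}^s R_j \subset \bigcap_{j=1}^s W_j$, and the latter is a.s.\ empty by Lemma \ref{lem:zero-one-dichotomy-srs-intersections}. For $s \leq \ell_\beta$, the same lemma identifies $\bigcap_{j=1}^s W_j$ as a randomly shifted $\beta_s$-stable regenerative set with $\beta_s = s\beta - s + 1 > 0$. Propagating the Palm/stationary structure through this intersection, the conditional law of $T_s = \min \bigcap_{j=1}^s R_j$ given $T_s < \infty$ must coincide with the Palm distribution on $[0,1]$ of a stationary $\beta_s$-stable regenerative set, which by the same principle as in the first paragraph gives $q^{1-\beta_s}$. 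Setting $q = 1$ recovers $\PP(T_s < \infty)$; a direct computation via the joint Laplace transform of the $s$ first-entry variables together with the induced $\beta_s$-regenerative structure then produces the constant $[\Gamma(\beta)\Gamma(2-\beta)]^s / [\Gamma(\beta_s)\Gamma(2-\beta_s)]$, the numerator arising from $s$ independent Palm normalizations and the denominator from the intersection's own, all reducing via Euler's reflection identity to factors of the form $(1-\beta')\pi/\sin(\pi\beta')$.

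The principal obstacle is propagating the full \emph{stationary/Palm} structure — not merely the range identified by Lemma \ref{lem:zero-one-dichotomy-srs-intersections} — through the intersection of $s$ independent regenerative sets, so that the \emph{stationary initial distribution} of the intersection is correctly characterized. One rigorous route is an explicit joint-Laplace-transform computation for the first simultaneous hitting time of all $s$ subordinators at a common point of $[0,1]$; another is an absolute-continuity argument comparing the conditioned measure against a reference product measure. Once this identification is secured, the gamma-function bookkeeping is essentially mechanical.
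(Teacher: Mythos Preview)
The paper does not prove this lemma at all --- it is stated with attribution to Corollary~B.3 in \cite{samorodnitsky:wang:2019:article}, so there is no in-paper proof to compare against. Your outline is therefore being judged on its own merits.

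Your structural reduction is sound: the case $s > \ell_\beta$ follows immediately from $R_j \subset W_j$ and Lemma~\ref{lem:zero-one-dichotomy-srs-intersections}, and for $s \le \ell_\beta$ the identification of $\bigcap_{j=1}^s W_j$ as a randomly shifted $\beta_s$-stable regenerative set is exactly the right leverage point. Writing $T_s = \min \bigcap_{j=1}^s W_j$, the problem does reduce to showing $\PP(T_s \le q) = c_s\, q^{1-\beta_s}$ for $q \in [0,1]$, from which both \eqref{eq:zero-one-dichotomy-srs-intersections-unit-interval} and \eqref{eq:[0,1]-srs-intersections-first-time} follow at once.

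However, the proposal is a plan, not a proof: you explicitly flag the ``principal obstacle'' --- propagating the stationary structure through the intersection and extracting the constant --- and then offer two possible routes (joint Laplace transform, absolute continuity) without executing either. That computation \emph{is} the lemma; everything else is packaging. A minor point: your opening sentence conditions on $R_j \neq \emptyset$, but since $0 \in Z_j$ and $Q_j \in [0,1]$ we have $Q_j \in R_j$ a.s., so $R_j$ is never empty and $\min R_j = Q_j$ deterministically --- the ``Palm'' language is suggestive but not needed here. If you intend to supply a self-contained proof rather than cite \cite{samorodnitsky:wang:2019:article}, you must actually carry out the Laplace-transform (or last-exit / potential-theoretic) calculation that produces the gamma-function ratio; the sketch as written would not be accepted as a proof.
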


\vspace{1em}

\paragraph{Comparisons with Previous Limit Random Sup-Measures}
          
In the paper \cite{samorodnitsky:wang:2019:article}, the limit random sup-measure is 
    \begin{equation}
        \label{eq:Frechet-Limit-RSM-SW2019AoP}
        \mathcal{M}_{\alpha,\beta} (B) = \sup_{t\in B} \sum_{j=1}^\infty \Gamma_j^{1/\alpha} \one(t \in R_j),  \quad B \in \mathcal{G}([0,1])
    \end{equation}
    for all $ \alpha>0$ and $\beta \in (0,1)$. The supremum of the summation in \eqref{eq:Frechet-Limit-RSM-SW2019AoP} must contain $m$ terms because each  $\Gamma_j^{1/\alpha} $ is positive. 
However, it is wrong to write \eqref{eq:limit-rsm-restriction-unit-interval} as 
\begin{equation}
    \label{eq:wrong-limit-rsm-restriction-unit-interval}
     \mathcal{M}^\prime_\beta (B) = \sup_{t\in B} \sum_{j=1}^\infty  - \log \Gamma_j \one(t \in R_j).
\end{equation}
The random sup-measure $\mathcal{M}^\prime_\beta$ is a new one if  $\beta > 1/2$. For each $B$ and each $j_0$ such that $R_{j_0} \cap B \neq \emptyset$, there are a.s.\ infinitely many points $t\in B$ that uniquely belongs to $R_{j_0}$ because a stable regenerative set does not hit points.  Thus, on the event $\{ \Gamma_1 > 1 \}$,  all $-\log \Gamma_j < 0$ and hence
$$
\mathcal{M}^\prime_\beta (B) =  \bigvee _{j\geq 1} - \log \Gamma_j \one( R_j \cap B \neq \emptyset) \neq \mathcal{M}_\beta (B). 
$$
This explains the necessity of $\sum_{j=1}^\infty \one (t\in R_j) = \ell_\beta$ in \eqref{eq:limit-rsm-sup-derivative-eta} and \eqref{eq:rsm-restricted-sup-derivative-eta}.

\vspace{1em}

 However, once we take $\beta \in (0,1/2)$, then $R_j, j\in \bbn$ are a.s.\ mutually disjoint by \eqref{eq:zero-one-dichotomy-srs-intersections}. Correspondingly,         
        \eqref{eq:limit-rsm-restriction-unit-interval} can be simplified as
        $$
         \mathcal{M}_\beta =  \bigvee_{j\geq 1} - \log \Gamma_j  \one(R_j \cap B \neq \emptyset), \quad B \in \mathcal{G}([0,1
         ]),
        $$
        which is the limit random sup-measure in \cite{chen:samorodnitsky:2022:article}.

\paragraph{Non-Gumbel Distributions}

The distribution $H_\beta$ of the random variable $M_\beta \triangleq \mathcal{M}_\beta([0,1])$ is not of the Gumbel type for $\beta > 1/2$. We prove it by a counterargument.   If $H_\beta$ is of the Gumbel-type, i.e.\ for some $c>0$,  
$$
H_\beta (x) = \exp \left(- c e^{-x} \right),
$$
then $e^{M_\beta}$ will have a $1$-Fr\'echet distribution. However, this is a contraction because by \eqref{eq:rsm-restricted-sup-derivative-eta} and     \eqref{eq:limit-rsm-restriction-unit-interval}, 
$$
e^{M_\beta} \eid \mathcal{M}_{1,\beta}([0,1]) = \sup_{t\in[0,1]} \sum_{j=1}^\infty \Gamma_j^{1/\alpha} \one(t \in R_j)
$$
for the $\mathcal{M}_{1,\beta}$ in \eqref{eq:Frechet-Limit-RSM-SW2019AoP}. It is clearly that $\mathcal{M}_{1,\beta}$ is strictly non-Fr\'echet due to the overlapping of stable regenerative sets, see \cite{samorodnitsky:wang:2019:article} \S 3.

\subsection{Extremal Processes}
\label{subsec:extremal-processes}

 Let $(\mathcal{E}_0(t): t >0)$ denote the standard  extremal process with Gumbel margins, i.e.\ $\mathcal{E}_0(1) \eid \Lambda$. We recall from (4.19) and Proposition 4.7 $(\rom1)$-$(\rom3)$ in \cite{resnick:1987:book} some basic facts. 
 \begin{itemize}
     \item   It is non-decreasing and 
      $$
     \lim_{t\to 0} \mathcal{E}_0(t) \xlongequal{a.s.} -\infty, \quad \lim_{t\to \infty} \mathcal{E}_0(t) \xlongequal{a.s.} \infty.
     $$
    It is  continuous in probability
     and admits a version in $D(0,\infty)=\bigcap_{\epsilon>0} D[\epsilon,\infty)$;
     \item  For all $0<t_1<\cdots<t_k < \infty$ and $-\infty< x_1\leq \cdots \leq x_k < \infty$, the finitely dimensional distribution is 
     \begin{equation*}
         \PP\big(\mathcal{E}_0(t_i) \leq x_i, i=1,\ldots, k  \big) = \exp\left( -\sum_{i=1}^k (t_i - t_{i-1}) e^{-x_i}  \right).
     \end{equation*}
 \end{itemize}

 \vspace{1em}
 
The random sup-measure $\mathcal{M}_\beta(\cdot)$ in \eqref{eq:limit-rsm-on-half-line} induces another extremal process 
\begin{equation}
    \label{eq:limit-extremal-process}
    \big(\mathcal{E}_\beta(t): t >0 \big), \quad \mathcal{E}_\beta(t) \triangleq \mathcal{M}_\beta([0,t]).
\end{equation}
If  $\beta\in (0,1/2]$, then  we claim
\begin{equation}
\label{eq:extremal-process-time-change-formula}
    (\mathcal{E}_\beta(t): t >0)  \eid (\mathcal{E}_0(t^{1-\beta}): t >0).
\end{equation}
This slightly extends the statement (4.4) in \cite{chen:samorodnitsky:2022:article}, where $\beta \in (0,1/2)$ ibid and the proofs are the same. 
However, in the case $\beta\in (1/2,1)$, the  time-change formula \eqref{eq:extremal-process-time-change-formula} will strictly fail. We derive the following comparison between finitely dimensional distributions. 

\vspace{1em}

\begin{proposition}[A Stochastic Dominance Relation]
    \label{proposition:time-change-formula-break-down}
    \phantom{blank}

    If $\beta\in (1/2,1)$, then  for all $0<t_1<\cdots<t_k < \infty$ and $-\infty< x_1\leq \cdots \leq x_k < \infty$,
     \begin{equation*}
           \PP\big(\mathcal{E}_\beta(t_i) \leq x_i, i=1,\ldots, k  \big) >
           \PP\big(\mathcal{E}_0(t_i^{1-\beta}) \leq x_i, i=1,\ldots, k  \big).
     \end{equation*}
    
    \end{proposition}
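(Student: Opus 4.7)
The plan is to realise both probabilities as functionals of the Poisson point process $\mathscr{P}$ from \eqref{eq:limit-rsm-poissonPP-intensity-measure} and compare them directly, using that the $\ell_\beta$-overlap requirement inside the definition \eqref{eq:limit-rsm-sup-derivative-eta} of $\eta$ is vacuous when $\beta\le 1/2$ and binding when $\beta>1/2$. I would introduce the auxiliary \emph{single-big-jump} sup-measure $\tilde{\mathcal M}(B):=\sup_j U_j\,\one\{W_j\cap B\ne\emptyset\}$ defined on the same Poisson process. A direct application of the marking theorem, together with $\int_{0}^{t}(1-\beta)q^{-\beta}\,dq=t^{1-\beta}$ and the a.s.\ inclusion $0\in\overline{\mathsf{Range}(\mathscr{Y}_\beta)}$, yields
\[
\PP\bigl(\tilde{\mathcal M}([0,t_i])\le x_i,\,\forall i\bigr)=\exp\!\Bigl(-\sum_{i=1}^k(t_i^{1-\beta}-t_{i-1}^{1-\beta})e^{-x_i}\Bigr)=\PP\bigl(\mathcal E_0(t_i^{1-\beta})\le x_i,\,\forall i\bigr),
\]
so the proposition becomes equivalent to the strict inequality $\PP(\mathcal M_\beta([0,t_i])\le x_i,\,\forall i)>\PP(\tilde{\mathcal M}([0,t_i])\le x_i,\,\forall i)$.

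When $\beta\le 1/2$, Lemma~\ref{lem:zero-one-dichotomy-srs-intersections} forces $\ell_\beta=1$ and the almost sure disjointness of the $W_j$'s, whence $\mathcal M_\beta=\tilde{\mathcal M}$ a.s.; this realises \eqref{eq:extremal-process-time-change-formula} as equality and extends (4.4) of \cite{chen:samorodnitsky:2022:article} up to the boundary $\beta=1/2$. For $\beta\in(1/2,1)$ one has $\ell_\beta\ge 2$, and $\eta$ is automatically $-\infty$ wherever fewer than $\ell_\beta$ sets $W_j$ overlap; consequently $\mathcal M_\beta$ ignores any Poisson atom that fails to participate in an $\ell_\beta$-tuple with non-empty intersection, whereas $\tilde{\mathcal M}$ still records such atoms.

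I would exploit this gap by conditioning on the marks $\{(Q_j,Z_j)\}$ and identifying a positive-probability event on which some atom satisfies $U_{j_0}>x_{i^\ast(j_0)}$ (so $\tilde{\mathcal M}$ violates a threshold, where $i^\ast(j_0)=\min\{i:Q_{j_0}\le t_i\}$) yet every $\ell_\beta$-tuple containing $j_0$ either fails to intersect inside $[0,t_k]$ or produces a total $U$-value below the relevant threshold, so that no $\mathcal M_\beta$-threshold is breached. Lemma~\ref{lem:zero-positive-dicotomy} together with the first-intersection law \eqref{eq:[0,1]-srs-intersections-first-time} would supply the quantitative control needed to assert that this event carries positive probability.

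The main obstacle is that there is no pointwise stochastic dominance $\mathcal M_\beta\le \tilde{\mathcal M}$: two overlapping atoms with positive moderate $U$-values can produce a tuple sum exceeding every individual $U_j$, yielding a reverse event on which $\mathcal M_\beta$ violates a threshold that $\tilde{\mathcal M}$ respects. The proof therefore reduces to showing that the surplus contributed by ``isolated-atom'' configurations strictly exceeds the mass contributed by ``overlap-induced'' ones. I would handle this by integrating out the $U$-coordinate against its $e^{-u}\,du$ intensity, exploiting the asymmetry that a single-atom event of exponential-scale mass $e^{-x}$ strictly dominates the Poisson measure of overlapping $\ell_\beta$-tuples with each $U_j\le x$ but $\sum U_j>x$; the latter is controlled with the aid of \eqref{eq:zero-one-dichotomy-srs-intersections-unit-interval}--\eqref{eq:[0,1]-srs-intersections-first-time} after the mark integration is carried out.
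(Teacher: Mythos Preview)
Your auxiliary object $\tilde{\mathcal M}$ is the right bridge, and your identification of $\tilde{\mathcal M}$ with the time-changed Gumbel extremal process is correct. However, you are missing the one step that makes the comparison elementary, and as a result you are left trying to prove a delicate ``net surplus'' inequality that you do not actually carry out.

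The key observation in the paper is the self-affinity of both $\mathcal E_\beta$ and $\mathcal E_0(\,\cdot^{\,1-\beta})$: for any $a>0$,
\[
\PP\bigl(\mathcal E(t_i)\le x_i,\ \forall i\bigr)=\PP\bigl(\mathcal E(a t_i)\le x_i+(1-\beta)\log a,\ \forall i\bigr),
\]
so by choosing $a$ small one may assume without loss of generality that $t_k<1$ and $x_k<0$. Once every threshold is negative, the obstacle you identify disappears entirely: any $\ell_\beta$-tuple sum $\sum_{i=1}^{\ell_\beta}(-\log\Gamma_{j_i})$ with each summand $\le x<0$ is at most $\ell_\beta x<x$, and conversely a single summand exceeding $x$ may still leave the $\ell_\beta$-sum below $x$. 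In other words, for $x_i<0$ the event $\{\tilde{\mathcal M}([0,t_i])\le x_i,\ \forall i\}$ is \emph{strictly} contained in $\{\mathcal M_\beta([0,t_i])\le x_i,\ \forall i\}$, and the strict inequality is immediate.

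Without that reduction, your final paragraph is only a plan: you acknowledge that pointwise dominance fails, but the claimed ``asymmetry'' between single-atom mass $e^{-x}$ and overlapping-tuple mass is neither stated precisely nor verified, and for general $x_i$ it is not obvious that the balance always goes the right way. The paper's route avoids this entirely; add the self-affinity step and you can delete the last paragraph.
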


\begin{proof}[Proof of Proposition \ref{proposition:time-change-formula-break-down}]
    \phantom{blank}
    
For both processes $\mathcal{E}_0$ and  $\mathcal{E}_\beta$, they satisfy the same self-affine property 
$$
(\mathcal{E}(at): t>0) \eid (\mathcal{E}(t) + (1-\beta) \log a: t>0).
$$ 
It follows that for any choice of $0<t_1<\cdots<t_k < \infty$ and $-\infty< x_1\leq \cdots \leq x_k < \infty$, 
$$
\PP\big(\mathcal{E}(t_i) \leq x_i, i=1,\ldots, k  \big) = 
\PP\big(\mathcal{E}(at_i) \leq x_i + (1-\beta)\log a, i=1,\ldots, k  \big). 
$$
We can therefore choose $a>0$ small enough such that all $a t_i < 1$ and all $ x_i + (1-\beta)\log a < 0$. We thus assume that
 $0<t_1<\ldots < t_k < 1$ and $-\infty< x_1\leq \cdots \leq x_k < 0$ without loss of generality. 
Let us further take $k=2$, which highlight the main spirit. 
In the representations \eqref{eq:rsm-restricted-sup-derivative-eta} and \eqref{eq:limit-rsm-restriction-unit-interval}, 
the sequence  $-\log \Gamma_j $ is strictly decreasing. Combine this fact with  $x_1 \leq x_2 <0$, we obtain that
\begin{align*}
     & \PP ( \mathcal{E}_\beta (t_1) < x_1,  \mathcal{E}_\beta (t_2) < x_2 ) \\
    > & \mathbb{P} 
    \Big(
    \max \{ -\log \Gamma_j : R_j \cap (0,t_1) \neq \emptyset \} \leq x_1, \\
    & \phantom{\PP \Big) }  
    \max\{  -\log \Gamma_j : R_j \cap (0,t_1) =\emptyset,  R_j \cap (0,t_2) \neq \emptyset \} \leq x_2 \Big) \\
    = & \mathbb{P} 
    \Big(
    \max \big\{  -\log \Gamma_j :Q_j \in (0,t_1)    \big\} \leq x_1 \Big) \cdot \mathbb{P} 
    \Big(
    \max \big\{  -\log \Gamma_j : Q_j \in (t_1,t_2)    \big\} \leq x_2 \Big) \\
    = &  \PP \left( \mathcal{E}_0 (t_1^{1-\beta}) < x_1,  \mathcal{E}_0 (t_2^{1-\beta}) < x_2 \right ).
\end{align*}
    
\end{proof}


\section{Dynamics of Null-Recurrent Markov Chains}
\label{sec:DNRMC}

\subsection{Construction}
\label{subsec:DNRMC-construction}

Let $(Y(t): t\in \bbz)$ be an \textit{irreducible, aperiodic} and \textit{null-recurrent} Markov chain on $\bbz$. Denote by $(\pi_i:i\in \bbz)$ the unique invariant measure by taking $\pi_0=1$. On the \textit{path space} $\mathbb{Y} = \mathbb{Z}^\mathbb{Z}$, there exist naturally
 the \textit{cylindrical $\sigma$-field} $\mathcal{Y}=\sigma(\mathbb{Z}^\mathbb{Z})$ and
the \textit{(left) shift operator}
\begin{equation}
    \label{eq:def:left-shift-operator}
    \tau: (\ldots, y_0, y_1 ,\ldots ) \mapsto (\ldots, y_1, y_2,\ldots).
\end{equation}
Equipped with the  measure
\begin{equation}
\mu(B) = \sum_{i\in \bbz} \pi_i \mathbb{P}(B \, | \, Y_0 = i) , \quad \forall \, B \in \mathcal{Y},
\end{equation} 
then the dynamical system $(\mathbb{Y},\mathcal{Y},\mu, \tau)$  is \textit{measure preserving}, \textit{conservative} and \textit{ergodic};
see \cite{harris:robbins:1953:article}.
By conditioning on the event $\{Y(0)=0\}$,  successive returns to $0$ form a renewal process which characterizes
quantitatively  the long
memory built in $(\mathbb{Y},\mathcal{Y},\mu, \tau)$.

\vspace{1em}

\begin{assumption}[Memory Parameter and Return Probabilities] 
\label{assumption:memory-parameter-and-return-probabilities}
\phantom{blank}

Denote by $\varphi:\mathbb{Y} \to \mathbb{N}$ the first return time to $0$,
\begin{equation}
    \label{eq:MC-dynamics-return-times}
    \varphi(y) = \min \{  t \geq 1: y_t = 0  \} , \quad \forall\, y = (y_t)_{t\in \bbz} \in \mathbb{Y};
\end{equation}
and denote by $F_\beta$ the (conditional) distribution of the renewal epoch, 
\begin{equation}
\label{eq:MC-dynamics-conditional-renewal-epoch-distribution}
    F_\beta(n) = \PP( \varphi \leq n \, | \,  Y_0 = 0 ) , \quad \forall\, n \in \bbn_0.
\end{equation}

\begin{description}
    \item[$(\rom1)$] Assume that the memory parameter 
\begin{equation}
    \label{eq:memory-parameter-full-range-unit-interval}
    \beta\in (0,1).
\end{equation}
    \item[$(\rom2)$] Assume that there is a 
    slowly varying function $L_\beta(\cdot)$ such that   
\begin{align}
    & \overline{F_\beta}(n)  = n^{-\beta} L_\beta (n),  \label{eq:assumption-regularly-varying-return-times}
    \\
    &
    \sup_{n\geq 0} \frac{ n  \PP( \varphi = n \, | \, Y_0 =0)}{ \overline{F_\beta} (n) } < \infty .
     \label{eq:assumption-doney-condition}
\end{align}
\end{description}

\end{assumption}

\vspace{1em}

\begin{remark}
    \label{rmk:remark-of-range-of-memory-parameter}
    { \rm
    \phantom{blank}

    \begin{itemize}
        \item In the construction of the Markov chain $(Y(t): t\in \bbz)$, one can replace the state space $\bbz$ by an infinitely countable set $\mathbb{S}$. And, correspondingly, replace the state $0$ by any $s_0\in \mathbb{S}$. See, for example, \cite{lacaux:samorodnitsky:2014:article} and
        \cite{samorodnitsky:wang:2019:article}. This replacement does not the main spirit of all proofs.

        \item The assumption  \eqref{eq:assumption-doney-condition} is indispensable in renewal theoretical statements, see \cite{doney:1997:article} Theorem B.
    \end{itemize}
    }
\end{remark}

\subsection{Intersections of Return Times}
\label{subsec:intersections-of-return-times}

The long time behavior of the return times plays an extremely important role in the sequel. Due to 
the null-recurrence, we can not  replace the infinite measure $\mu$ by an invariant probability measure. We shall instead construct a sequence of probability measures $(\mu_n: n \in \bbn_0)$ and derive correspondingly a weak convergence theorem, see Theorem \ref{thm:key-preliminary-theorem-far-away-intersection}. Most proofs in this subsection require a systematic preparation from
renewal theory. We thus include the supplements in and defer proofs to \S \ref{subsec:supplement-renewal-theory-and-proofs}.

\vspace{1em}

Let us fix some integer $m\geq 2$. Henceforth in   \S \ref{subsec:intersections-of-return-times}, we  assume and write  respectively  that  
\begin{equation}
    \label{eq:local-range-of-beta}
    \beta\in \left(  \frac{m-1}{m}, \frac{m}{m+1}  \right), \quad  \betast \triangleq m\beta - (m-1).
\end{equation}
It is easy to note that $0< \betast < 1-\beta < 1/2$.
The 
\textit{wandering rate sequence} $(w_n: n\in \bbn_0)$ is given by 
\begin{equation}
    \label{eq:definition-wandering-rate}
    w_n = \mu \left(  \cup_{k=0}^n A_k  \right)  \quad \text{with}
    \quad A_k \triangleq \tau^{-k}(A_0) =  \{ y = (y_t)_{t\in \bbz} \in \mathbb{Y}: y_k = 0   \}.
\end{equation}
We recall from Remark 2.1 in \cite{chen:samorodnitsky:2022:article} that $w_n \in \mathsf{RV}_{1-\beta}$ and asymptotically
\begin{equation}
\label{eq:wandering-rate-asymptotic}
    w_n \sim \frac{n^{1-\beta} L_\beta(n)}{1-\beta}.
\end{equation}
For each $n$, the quantity $w_n$ scales a probability measure $(\mathbb{Y}, \mu_n(\cdot)$ on $\mathcal{Y})$ as 
\begin{equation}
\label{eq:probab-measure-mu-n}
    \mu_n (\cdot) =  \mu  \big( \cdot \cap \,  (\cup_{k=0}^n A_k) \big) \, \Big/ \, w_n.   
\end{equation}

\vspace{1em}

Let $ \{ Y_{j;n}(t): t\in \bbz \}, j=1,2,3,\ldots$ be i.i.d.\ random elements with the law $\mu_n$. For each $j$ and $n$, we denote the (non-empty) set of return times of  $Y_{j;n}$ by
\begin{equation}
    \label{eq:return-times-random-zeros-Ikn}
    I_{j,n} = \{ 0 \leq t \leq n: Y_{j,n} (t) = 0  \} .
\end{equation}
Due to the range \eqref{eq:local-range-of-beta} of $\beta$, the central object of this subsection is the intersection $\bigcap_{j=1}^m I_{j,n}$. Its various properties are required in the sequel.
  
\vspace{1em}

\begin{lemma}[Theorem 5.4 and Corollary B.3 in \cite{samorodnitsky:wang:2019:article}]
    \label{lem:intersections-from-samorodnitsky-wang-2019AoP}
    \phantom{blank}

    \begin{itemize}
        \item[$(\rom1)$]  Let $R_j$ be in \eqref{eq:stationary-srs-unit-interval}. Then,
        for any integer $N \in \bbn$,  
    \begin{equation}
    \label{eq:convergence-of-intersections-samorodnitsky&wang;1}
        \left(  \frac{1}{n} \bigcap_{j\in J} I_{j,n}   \right)_{  J \subset \{1,\ldots, N \}   } 
        \Rightarrow   \left(  \bigcap_{j\in \mathscr{J}} R_j   \right)_{  \mathscr{J} \subset \{ 1,\ldots, N \}   }.
    \end{equation} 
      
        \item[$(\rom2)$] Let $Q_\betast$ be a random variable with distribution 
      \begin{equation}
          \label{eq:distribution-V-betast}
          \PP( Q_\betast\leq q ) = q ^{1-\betast} \in [0,1].
      \end{equation}     
      Then, we have the the conditional weak convergence
      \begin{equation}
          \label{eq:convergence-of-first-simultaneous-intersection-time}
           \frac{\min \bigcap_{i=1}^m  I_{i,n} }{n} \xLongrightarrow{ \PP \left( \cdot \, \big| \, \bigcap_{i=1}^m  I_{i,n} \neq \emptyset \right)}
       Q_\betast.
      \end{equation}
      Hence, by  \eqref{eq:zero-one-dichotomy-srs-intersections-unit-interval} and   \eqref{eq:weak-convergence-of-random-closed-set},
       \begin{equation}
          \label{eq:probability-non-empty-m-intersections}
          \lim_{n\to \infty}  \PP \left(  \bigcap_{i=1}^m  I_{i,n} \neq \emptyset \right)= \frac{\big[ \Gamma(\beta) \Gamma(1-\beta) \big]^m}{  \Gamma(\betast) \Gamma(2-\betast) }  \in (0,1).
      \end{equation}

    \end{itemize}
\end{lemma}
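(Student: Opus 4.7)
The plan is to translate both claims into statements about a single random closed set and an intersection of independent copies of it, and then invoke the tools developed in \cite{samorodnitsky:wang:2019:article}. The basic idea is that, under $\mu_n$, the path $Y_{j;n}$ has a concrete renewal structure that, after rescaling time by $1/n$, matches that of a stable regenerative set. Concretely, by \eqref{eq:probab-measure-mu-n} and the strong Markov property, conditionally on $\{Y_{j;n}(k)=0\}$ the forward and backward excursions from $0$ are independent with renewal distribution $F_\beta$, and the distribution of the first hitting time $\min I_{j,n}/n$ under $\mu_n$ is controlled by the wandering rate asymptotic \eqref{eq:wandering-rate-asymptotic}, producing (via a Karamata-type computation) the limiting $q^{1-\beta}$ law of $Q_j$ in \eqref{eq:stationary-srs-unit-interval}.

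\textbf{Step 1 (marginal convergence).} First I would establish, for a single $j$, the weak convergence $\frac{1}{n} I_{j,n} \Rightarrow R_j$ in the Fell topology on $\mathcal{F}([0,1])$. This follows by combining: (a) the distributional convergence of $\min I_{j,n}/n$ to $Q_j$, a consequence of \eqref{eq:probab-measure-mu-n}, \eqref{eq:assumption-regularly-varying-return-times} and Karamata's theorem; and (b) the convergence of the remaining renewal times in $I_{j,n}$ (after the first hit) to the range of a $\beta$-stable subordinator, which is the classical invariance principle for renewal processes with regularly varying inter-arrivals, enabled by Doney's condition \eqref{eq:assumption-doney-condition}.

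\textbf{Step 2 (joint convergence and intersections).} Since the $Y_{j,n}$'s are i.i.d.\ under $\mu_n$ and the $R_j$'s are i.i.d.\ copies, the marginal convergence upgrades automatically to joint convergence $\big(\frac{1}{n} I_{j,n}\big)_{j=1}^N \Rightarrow (R_j)_{j=1}^N$. To pass to the intersections indexed by $J\subset\{1,\ldots,N\}$, I would apply the continuous mapping theorem to the intersection operator on the Fell space. The subtle point is verifying that the $a.s.$ limits $\bigcap_{j\in J} R_j$ are continuity points of the intersection map; this uses the zero–one dichotomy of Lemma~\ref{lem:zero-one-dichotomy-srs-intersections} together with the fact that $a.s.$ a (non-empty) intersection of $\leq \ell_\beta$ independent stable regenerative sets is itself a shifted stable regenerative set, and so has no isolated points, producing the required continuity. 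This yields \eqref{eq:convergence-of-intersections-samorodnitsky&wang;1}.

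\textbf{Step 3 (part (ii)).} Taking $J=\{1,\ldots,m\}$ in part (i), the continuous mapping theorem applied to $F\mapsto \min F$ (which is continuous on non-empty closed subsets of $[0,1]$ in the Fell topology) gives $\min \bigcap_{i=1}^m I_{i,n}/n \Rightarrow \min \bigcap_{j=1}^m R_j$ conditionally on non-emptiness. Since $m\leq \ell_\beta$ by \eqref{eq:local-range-of-beta}, Lemma~\ref{lem:zero-positive-dicotomy} supplies both the limiting probability of non-emptiness in \eqref{eq:probability-non-empty-m-intersections} and the conditional distribution $q^{1-\beta_\ast}$ in \eqref{eq:[0,1]-srs-intersections-first-time}, which is exactly the law of $Q_{\beta_\ast}$.

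\textbf{The main obstacle.} The delicate step is Step 1: the renewal-process invariance principle must be carried out under the probability $\mu_n$, which is neither the stationary nor the Palm measure. One must handle the first (possibly non-Palm) excursion separately from subsequent i.i.d.\ renewals and show that both components rescale jointly to $(Q_j, Z_j)$. This is where the wandering-rate normalization and Doney's condition \eqref{eq:assumption-doney-condition} play their essential role, and where the argument from \cite{samorodnitsky:wang:2019:article} is directly applicable.
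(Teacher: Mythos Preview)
The paper does not actually prove this lemma: it is stated with the attribution ``Theorem 5.4 and Corollary B.3 in \cite{samorodnitsky:wang:2019:article}'' and used as a black box, so there is no in-paper proof to compare against. Your sketch is a faithful outline of how the argument in the cited reference proceeds --- establish $\tfrac{1}{n}I_{j,n}\Rightarrow R_j$ via the renewal/subordinator invariance principle under $\mu_n$, lift to the product by independence, and then push through intersections using Fell-continuity at limits that are (shifted) stable regenerative sets with no isolated points --- so in spirit your approach matches what the authors are importing. One small correction: in Step~3 the map $F\mapsto\min F$ is not continuous on all non-empty closed sets; you need that the limit set $\bigcap_{j=1}^m R_j$ a.s.\ has no isolated points (in particular its minimum is a two-sided accumulation point from the right), which is exactly what the shifted $(m\beta-m+1)$-stable regenerative structure from Lemma~\ref{lem:zero-one-dichotomy-srs-intersections} guarantees. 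With that caveat, your plan is sound and essentially reproduces the cited proof.
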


\vspace{1em}

\begin{proposition}
\label{prop:more-intersection-properties}
  \phantom{blank}
  
    \begin{itemize}
        \item[$(\rom1)$] For any open interval $B \subset [0,1]$, 
        \begin{equation}
       \label{eq:finite-intersection-properties-non-empty} 
       \lim_{C \to \infty} \lim_{n\to \infty}
       \PP\left(  \exists \, 1\leq j_{1} < \cdots <  j_m \leq C  \text{ such that } nB \cap  \bigcap_{i=1}^m I_{j_i,n}  \neq \emptyset \right) = 1.
    \end{equation}
        \item[$(\rom2)$]  Take any $\delta \in (0, 1-\beta - (m+1)^{-1})$, then
        \begin{equation}
       \label{eq:finite-intersection-properties-empty} 
       \lim_{n\to \infty} \PP\left( \exists \, 1\leq 
       j_1 < \cdots < j_{m+1} \leq n^\delta \text{ such that }\bigcap_{i=1}^{m+1} I_{j_i,n}  \neq \emptyset  \right) = 0.
    \end{equation}

    \item[$(\rom3)$] 
    Let $(\vartheta_n)$ be the sequence \eqref{eq:sequence-vartheta-n}. Then 
    for any $C>0$, there exists $c>0$ such that 
\begin{equation}
         \label{eq;m-intersections-cardinality-control;log-power-law-tail}
      \PP \left( \left| \bigcap_{i=1}^m  I_{i,n} \right| \geq  c\vartheta_n \log n
       \right) \leq n^{-C}.
     \end{equation}
    \item[$(\rom4)$]   Let $\{ \mathscr{Z}_\betast (t):t \in \bbr_+  \}$ be  the Mittag-Leffler process \eqref{eq:Mittag-Leffler-process} with self-similar index $\betast$ and $V_\betast$ be in \eqref{eq:distribution-V-betast}. Assume that $\{ \mathscr{Z}_\betast (t):t \in \bbr_+  \}$ and $Q_\betast$ are independent, then for any $b\in (0,1)$
     \begin{equation}
         \label{eq;m-intersections-cardinality-weak-convergence}
       \frac{ \left| (0,nb) \cap  \bigcap_{i=1}^m  I_{i,n} \right| } { \vartheta_n } \xLongrightarrow{ \PP \left( \cdot \, \big| \, (0,nb) \cap \bigcap_{i=1}^m  I_{i,n} \neq \emptyset \right)}
        b^\betast \mathscr{Z}_\betast ( 1 - Q_\betast ).
     \end{equation}
\end{itemize}

\end{proposition}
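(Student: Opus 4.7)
My plan is to treat each of the four parts with a tailored argument, all using Lemma \ref{lem:intersections-from-samorodnitsky-wang-2019AoP} as a scaling backbone. For Part ($\rom1$), I would apply the portmanteau theorem for random closed sets to \eqref{eq:convergence-of-intersections-samorodnitsky&wang;1}: for any fixed $m$-tuple $(j_1,\ldots,j_m)$,
\[
\liminf_n \PP\Big(nB \cap \bigcap_{i=1}^m I_{j_i,n} \neq \emptyset\Big) \;\geq\; \PP\Big(B \cap \bigcap_{i=1}^m R_{j_i} \neq \emptyset\Big) \;>\; 0,
\]
with positivity holding because $B$ is open and $\bigcap_{i=1}^m R_{j_i}$ is a randomly shifted $\betast$-stable regenerative set, which hits any open interval with positive probability (once the shift lands inside $B$). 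Partitioning $\{1,\ldots, C\}$ into $\lfloor C/m\rfloor$ disjoint $m$-tuples produces independent events by the i.i.d.\ structure of $(Y_{j,n})_j$, so the joint complement has probability at most $(1-p+o_n(1))^{\lfloor C/m\rfloor}$, which tends to $0$ as first $n\to\infty$ and then $C\to\infty$.

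For Part ($\rom2$), a first-moment union bound suffices. Since $\mu_n(A_k)=1/w_n$ for $k\in[0,n]$ by \eqref{eq:probab-measure-mu-n} and the invariance of $(\pi_i)$, independence of the $Y_{i,n}$ yields
\[
\EE \Big| \bigcap_{i=1}^{m+1} I_{j_i,n} \Big| = \frac{n+1}{w_n^{m+1}} \;\asymp\; n^{(m+1)\beta - m} L(n)
\]
for some slowly varying $L$, using \eqref{eq:wandering-rate-asymptotic}. Markov plus a union bound over $\binom{n^\delta}{m+1}\lesssim n^{\delta(m+1)}$ candidate tuples gives a total bound of order $n^{\delta(m+1) + (m+1)\beta - m}L(n)$, which vanishes exactly under $\delta < 1-\beta - 1/(m+1) = m/(m+1)-\beta$, swallowing the slowly varying factor.

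Part ($\rom3$) is the technically heaviest step and constitutes the main obstacle. The plan is to bound the high factorial moments $\EE\big|\bigcap_{i=1}^m I_{i,n}\big|^k$ by opening the sum over ordered $k$-tuples $0\leq t_1\leq \cdots \leq t_k\leq n$ and controlling the joint hitting probabilities via the Markov property and the local renewal estimate \eqref{eq:assumption-doney-condition}. The resulting bound should scale as $\vartheta_n^k \cdot k!/\Gamma(k\betast+1)$, in line with the stretched-exponential tail of the Mittag-Leffler limit variable $\mathscr{Z}_\betast(1)$. Markov's inequality at the optimized order $k\asymp \log n$ then converts this moment growth into the claimed $n^{-C}$ decay, since the Stirling asymptotics of $k!/\Gamma(k\betast+1)$ create an exponent of order $-\betast \log n \cdot \log\log n$.

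Finally, for Part ($\rom4$), I would combine three ingredients: (a) the set-valued scaling \eqref{eq:convergence-of-intersections-samorodnitsky&wang;1}, which identifies $\frac{1}{n}\bigcap_{i=1}^m I_{i,n}$ with $\bigcap_{i=1}^m R_i$ in distribution; (b) applying this convergence on the window $[0,b]$, conditioning on non-emptiness, and invoking Lemma \ref{lem:zero-positive-dicotomy} rescaled by self-similarity so that the first common return time divided by $nb$ converges to $Q_\betast$; and (c) a conditional Lamperti-type argument showing that, given this first common return time, the intersection counting function on the remainder of the window rescales to the Mittag-Leffler process evaluated at the remaining length $nb(1-Q_\betast)$. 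The scaling identity $\mathscr{Z}_\betast(ct)\eid c^\betast \mathscr{Z}_\betast(t)$ then yields the final form $b^\betast \mathscr{Z}_\betast(1-Q_\betast)$, with the tightness required to promote convergence of cardinalities (rather than just of sets) supplied precisely by the tail bound in Part ($\rom3$).
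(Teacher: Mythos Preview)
Your arguments for Parts ($\rom1$), ($\rom2$) and ($\rom4$) are essentially the paper's: the same block-independence trick for ($\rom1$), the same first-moment union bound for ($\rom2$), and for ($\rom4$) the paper likewise conditions on the first simultaneous return and then applies the functional convergence of the $\betast$-renewal counting process to the Mittag-Leffler process (your step (c)). One remark on ($\rom4$): the appeal to Part~($\rom3$) for ``tightness to promote convergence of cardinalities'' is unnecessary and slightly misleading. Fell convergence of $\frac{1}{n}\bigcap I_{i,n}$ does not by itself yield cardinality convergence, and a tail bound does not repair this. What actually carries the argument---both in your sketch and in the paper---is the structural identity
\[
\bigcap_{j=1}^m I_{j,n} \eid \Big(\min \bigcap_{j=1}^m I_{j,n} + \mathsf{Range}_\betast\Big) \cap \{0,\ldots,n\},
\]
which reduces the cardinality to $(S_\betast)^\leftarrow$ evaluated at a random time, and then the $J_1$-convergence $(S_\betast)^\leftarrow(\lfloor n\cdot\rfloor)/\vartheta_n \Rightarrow \mathscr{Z}_\betast(\cdot)$ together with continuity of $\mathscr{Z}_\betast$.

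Part ($\rom3$) is where you diverge. Your plan is to control the $k$-th factorial moment of $|\bigcap_{i=1}^m I_{i,n}|$ by iterating the local renewal estimate \eqref{eq:assumption-doney-condition}, obtain growth of order $\vartheta_n^k\, k!/\Gamma(k\betast+1)$, and then run Markov at $k\asymp \log n$. This is correct in principle and would succeed, but it is considerably heavier than what the paper does. The paper instead uses the same structural identity above to get the one-line stochastic dominance
\[
\Big|\bigcap_{j=1}^m I_{j,n}\Big| \;\leq_P\; \big|\mathsf{Range}_\betast \cap \{0,\ldots,n\}\big|,
\]
and then invokes a sojourn-time large-deviation estimate for random walks (Pruitt--Taylor, Lemma~\ref{lem:cardinality-estimate-range-of-S-beta}($\rom2$)): for any $\delta\in(0,1)$ there is $\lambda_0$ with $\PP(T_W(k)\geq \lambda\, \EE T_{D(W)}(k))\leq e^{-\lambda\delta}$ for $\lambda>\lambda_0$. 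Taking $W=\{0,\ldots,n\}$ and $\lambda=c\log n$ gives the $n^{-C}$ bound immediately. The trade-off: your moment approach is self-contained but requires a careful inductive bound on multi-point hitting probabilities, while the paper's route is a two-line reduction to an off-the-shelf lemma once the stochastic dominance is observed.
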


\vspace{1em}

We now formulate the last result in this section. 
For each open interval $B \subset [0,1]$, 
denote by 
$\Omega_{n,B}$ the event $ \left\{  nB \cap \bigcap_{j=1}^m I_{j,n} \neq \emptyset   \right  \}$, $\mathscr{F}_{n,B}$ the restricted $\sigma$-field  $\sigma(I_{j,n}, j=1,\ldots, m) \big|_{ \Omega_{n,B}}$, and $\boldsymbol{P}_{n,B}$ the conditional probability measure $\PP\left(\cdot \, | \,  \mathscr{F}_{n,B} \right)$. There are two related random variables of our interests,
\begin{align}
  & p_n (B) = \PP\left( I_{m+1,n} \cap nB \cap \bigcap_{j=1}^m I_{j,n} \neq \emptyset \, \bigg| \, \mathscr{F}_{n,B}  \right), 
    \label{eq:random-intersection-probability-conditional-probability}\\
  &  \ell_n(B)  = 
     \min \left\{  j\geq m+1:  I_{j,n} \cap  nB \cap \bigcap_{i=1}^m I_{i,n}  \neq \emptyset \right\}.
    \label{eq:definition-first-intersection-after-m} 
\end{align}
Obviously,  $\ell_n (B)- m$ is geometrically distributed with success probability   $ p_n (B)$.

\vspace{1em}

\begin{theorem} 
\label{thm:key-preliminary-theorem-far-away-intersection}
\phantom{blank}

Let $(w_n: n\in \bbn)$ be the sequence \eqref{eq:definition-wandering-rate}, $(\vartheta_n: n\in \bbn)$ be the sequence \eqref{eq:sequence-vartheta-n}  and  $p_\mathsf{escape}$ be the constant \eqref{eq:escape-probability-(m+1)-intersections}.
Let $\{ \mathscr{Z}_\betast (t):t \in \bbr_+  \}$ be  the Mittag-Leffler process \eqref{eq:Mittag-Leffler-process} with self-similar index $\betast$, $Q_\betast$ be the random variable \eqref{eq:distribution-V-betast} and $\Gamma$ be a standard exponential random variable. Assume further that $\{ \mathscr{Z}_\betast (t):t \in \bbr_+  \}, Q_\betast$ and $\Gamma$ are independent. Then, for any open interval
$B\subset [0,1]$ with length  $d_B$, 
\begin{equation}
    \label{eq:key-preliminary-joint-far-away-intersection}
      \left( \frac{ w_n p_n (B) }{ \vartheta_n}, p_n (B) \ell_{n} (B) \right) 
   \xLongrightarrow{ \boldsymbol{P}_{n,B} }
    \left( p_\mathsf{escape} d_B^\betast \mathscr{Z}_\betast (1-Q_\betast) , \Gamma \right) 
\end{equation}
\end{theorem}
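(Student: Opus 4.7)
The backbone of the proof is the observation that, given the conditioning sigma-field $\mathscr{F}_{n,B}$, the law of $\ell_n(B) - m$ is geometric on $\bbn$ with success probability $p_n(B)$. Consequently, once we verify that $p_n(B) \to 0$ in $\boldsymbol{P}_{n,B}$-probability, the classical rescaling of the geometric distribution yields
$$
p_n(B)\, \ell_n(B) \xLongrightarrow{\boldsymbol{P}_{n,B}} \Gamma,
$$
with the standard exponential limit $\Gamma$ conditionally independent of $\mathscr{F}_{n,B}$. Since $w_n p_n(B)/\vartheta_n$ is itself $\mathscr{F}_{n,B}$-measurable, the joint convergence \eqref{eq:key-preliminary-joint-far-away-intersection} reduces to the marginal weak convergence
$$
\frac{w_n p_n(B)}{\vartheta_n} \xLongrightarrow{\boldsymbol{P}_{n,B}} p_\mathsf{escape}\, d_B^{\betast}\, \mathscr{Z}_{\betast}(1-Q_{\betast}),
$$
with the asymptotic independence from $\Gamma$ coming for free from the conditional geometric structure.

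From the definition of $\mu_n$ in \eqref{eq:probab-measure-mu-n}, together with the inclusion $A_n \triangleq nB \cap \bigcap_{j=1}^m I_{j,n} \subset [0,n]$, the random success probability admits the clean representation
$$
 w_n\, p_n(B) \;=\; \mu\Bigl(\textstyle\bigcup_{k \in A_n} A_k \Bigr),
$$
where the right-hand side is $\mathscr{F}_{n,B}$-measurable. By Proposition \ref{prop:more-intersection-properties}$(\rom4)$, the cardinality $|A_n|/\vartheta_n$ already converges under $\boldsymbol{P}_{n,B}$ to $d_B^{\betast} \mathscr{Z}_{\betast}(1 - Q_{\betast})$. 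Thus it remains to establish the asymptotic ratio
$$
\frac{\mu\bigl(\bigcup_{k \in A_n} A_k \bigr)}{|A_n|} \xLongrightarrow{\boldsymbol{P}_{n,B}} p_\mathsf{escape}.
$$

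To prove this ratio, I plan to decompose the union by the latest zero time: writing $A_n = \{k_1 < \cdots < k_{|A_n|}\}$, we have $\bigcup_{i} A_{k_i} = \bigsqcup_{i} \bigl(A_{k_i} \setminus \bigcup_{j > i} A_{k_j}\bigr)$, and shift-invariance of $\mu$ reduces each piece to a ``forward-escape'' probability: the probability, starting from $y_{k_i}=0$, that $y$ does not visit $0$ at any of the subsequent points $k_{i+1}, \ldots, k_{|A_n|}$. The cardinality tail bound \eqref{eq;m-intersections-cardinality-control;log-power-law-tail} together with Doney's local renewal theorem (applicable thanks to \eqref{eq:assumption-doney-condition}) should provide uniform control on single-step return probabilities, from which one can show that the typical gaps between successive $k_i$'s are of the correct order for each forward-escape probability to stabilize at the constant $p_\mathsf{escape}$ defined through the limiting $(m+1)$-intersection dichotomy in \eqref{eq:escape-probability-(m+1)-intersections}.

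The main technical hurdle is precisely this stabilization, carried out uniformly over the random geometry of $A_n$, whose gap structure inherits the intricate fluctuations of an intersection of $m$ independent null-recurrent return sets and exhibits clustering at microscopic scales while remaining sparse at the macroscopic scale $\vartheta_n$. The heart of the argument is to show that the escape probability from a typical point of $A_n$ is insensitive to the microscopic clusters and is governed purely by the macroscopic scale; I expect to carry this out via a coupling with the limiting Mittag-Leffler object $\mathscr{Z}_{\betast}$ and a careful analysis of boundary contributions near the edges of $nB$. Once this stabilization is proved, combining it with Proposition \ref{prop:more-intersection-properties}$(\rom4)$ and the conditional geometric representation of $\ell_n(B)$ immediately yields \eqref{eq:key-preliminary-joint-far-away-intersection}.
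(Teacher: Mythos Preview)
Your skeleton coincides with the paper's: the conditional-geometric law of $\ell_n(B)-m$ yielding the exponential $\Gamma$ independent of $\mathscr{F}_{n,B}$; the identity $w_n\,p_n(B)=\mu\bigl(\bigcup_{k\in A_n}A_k\bigr)$; and the reduction, via Proposition~\ref{prop:more-intersection-properties}$(\rom4)$, to showing $\mu\bigl(\bigcup_{k\in A_n}A_k\bigr)\big/|A_n|\to p_\mathsf{escape}$. Your latest-zero decomposition is exactly the paper's last-intersection decomposition, and the resulting sum of forward-escape probabilities is nothing other than the random-walk capacity $\mathsf{Capacity}(A_n)$ of \eqref{eq:capacity-of-a-set-A} with respect to $\boldsymbol{S}_{\beta,m+1}$.

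The divergence is in how you propose to resolve your ``main technical hurdle''. You plan a direct gap analysis and a coupling with the Mittag-Leffler limit, explicitly worrying about microscopic clustering and boundary effects. The paper bypasses all of this with one structural observation plus Kingman's subadditive ergodic theorem: by \eqref{eq:structure-Iin-m-intersection}, under $\boldsymbol{P}_{n,B}$ the set $A_n$ has the law of the first $|A_n|$ points of the range of a \emph{fixed} $\betast$-renewal process, so that $w_n\,p_n(B)\eid \mathsf{CR}_{0,|A_n|}$ in the notation of \S\ref{subsubsec:convergence-capacity}. Subadditivity and stationarity of the array $\{\mathsf{CR}_{n_1,n_2}\}$ then give $\mathsf{CR}_{0,n}/n\to p_\mathsf{escape}$ a.s.\ and in $L^1$ (Proposition~\ref{prop:convergence-of-range-capacities-subadditive-argument}), which combined with $|A_n|/\vartheta_n\Rightarrow d_B^{\betast}\mathscr{Z}_{\betast}(1-Q_{\betast})$ closes the argument. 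Your pointwise-stabilization route is not wrong in spirit---by regeneration, the escape probability from any point far from the right edge of $A_n$ does equal $p_\mathsf{escape}$ in law---but turning that into a rigorous statement about the average would require uniform control of the $o(|A_n|)$ boundary terms, work that the subadditive ergodic theorem delivers for free.
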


\subsection{Supplements and Proofs}  
\label{subsec:supplement-renewal-theory-and-proofs}

\textbf{Convention:} Unless we point out the initial position $S(0)$, a renewal process $(S(t):t\in \bbn_0)$ will always start from $S(0)=0$.

\subsubsection{Individual Renewal}
\label{subsubsec:individiaul-renewal}

Let us assume $\beta\in (0,1)$ in \S \ref{subsubsec:individiaul-renewal}.  Let $\boldsymbol{S}_\beta=(S_\beta (t): t \in \bbn_0)$ be a renewal process with tail epoch distribution $\overline{F_\beta}(n)  = n^{-\beta} L_\beta (n)$ in 
 Assumption \ref{assumption:memory-parameter-and-return-probabilities}.
Standard results shows that 
 \begin{align*}
     & u_\beta (n) = \PP(\boldsymbol{S}_\beta \text{ renews at time } n) \sim \frac{n^{\beta-1}}{\Gamma(\beta)\Gamma(1-\beta) L_\beta(n)}, 
     \\
     & U_\beta (n) = \sum_{t=0}^n u_\beta (n) \sim \frac{n^\beta}{\Gamma(1+\beta)\Gamma(1-\beta)L_\beta(n)} 
     \asymp \frac{1}{\overline{F_\beta} (n)}.
 \end{align*}

\vspace{1em}

\begin{lemma}
\label{lem:cardinality-estimate-range-of-S-beta}
\phantom{blank}

   Denote by $\mathsf{Range}_\beta = \{ S_\beta(0), S_\beta(1), S_\beta(2), \ldots \}$ the full range of $\boldsymbol{S}_\beta$. Then,
\begin{itemize}
    \item[$(\rom1)$] $\EE  \left|  
    \mathsf{Range}_\beta \cap \{0,\ldots,n\}
    \right|   =  U_\beta (n)$.
    \item[$(\rom2)$] For any $C>0$, there exists $c>0$ such that, for $n \gg 0$,
    \begin{equation}
    \label{eq:upper-bounds-on-cardinality-single-range}
   \PP \left( \left|  
    \mathsf{Range}_\beta \cap \{0,\ldots,n\}
    \right|  \geq 
    c  U_\beta (n) \log n
    \right)  \leq n^{-C}.
\end{equation}
\end{itemize}
\end{lemma}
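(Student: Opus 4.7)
This is routine. Writing
$$\bigl|\mathsf{Range}_\beta \cap \{0,\ldots,n\}\bigr| = \sum_{k=0}^{n} \one\{k\in \mathsf{Range}_\beta\}$$
and taking expectations, the definition $u_\beta(k) = \PP(k\in \mathsf{Range}_\beta)$ immediately gives $\EE\bigl|\mathsf{Range}_\beta \cap \{0,\ldots,n\}\bigr| = \sum_{k=0}^n u_\beta(k) = U_\beta(n)$.

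\textbf{Part (\rom2).} My plan is to bypass any fancy concentration machinery for Mittag--Leffler--type scalings and instead control the upper tail of $N_n \triangleq \bigl|\mathsf{Range}_\beta \cap \{0,\ldots,n\}\bigr|$ directly through the i.i.d.\ structure of the inter-arrival times. Since $\boldsymbol{S}_\beta$ is a.s.\ strictly increasing,
$$
\{ N_n \geq k \} = \{ S_\beta(k-1) \leq n\}, \qquad k \geq 1.
$$
Writing $S_\beta(k-1) = X_1 + \cdots + X_{k-1}$ with $(X_j)_{j\geq 1}$ i.i.d.\ positive epochs of common survival function $\overline{F_\beta}$, the event $\{S_\beta(k-1) \leq n\}$ forces $X_i \leq n$ for every $i\leq k-1$, so that
$$
\PP(N_n \geq k) \;\leq\; \bigl(1-\overline{F_\beta}(n)\bigr)^{k-1} \;\leq\; \exp\!\bigl(-(k-1)\,\overline{F_\beta}(n)\bigr).
$$

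The two asymptotics for $u_\beta$ and $U_\beta$ recalled immediately before the lemma combine to give $\overline{F_\beta}(n)\,U_\beta(n) \to \bigl(\Gamma(1+\beta)\Gamma(1-\beta)\bigr)^{-1} =: \kappa > 0$. Plugging $k = \lceil c\, U_\beta(n) \log n \rceil$ into the previous display, we obtain $\PP(N_n \geq c\, U_\beta(n) \log n) \leq n^{-c\kappa/2}$ for all sufficiently large $n$. Choosing $c > 2C/\kappa$ concludes the argument.

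There is essentially no obstacle: the crude inclusion $\{S_\beta(k) \leq n\} \subset \{\max_{i\leq k} X_i \leq n\}$ already captures the correct order of magnitude on the $\log n$ scale, so no moment-generating-function computation or Mittag--Leffler tail estimate is needed. The only analytic input is the elementary renewal asymptotic $\overline{F_\beta}(n)\,U_\beta(n) \asymp 1$ advertised in the lemma's preamble.
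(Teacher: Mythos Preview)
Your proof is correct, and for Part $(\rom2)$ it is genuinely different from (and more elementary than) the paper's argument. The paper invokes a sojourn-time concentration inequality of Pruitt and Taylor (1969, Lemma~3.1): for a random walk and a finite window $W$, one has $\PP\bigl(T_W(k)\geq\lambda\,\EE T_{D(W)}(k)\bigr)\leq e^{-\lambda\delta}$, and then specializes to $W=\{0,\ldots,n\}$ with $\lambda=c\log n$. You instead exploit directly the identity $\{N_n\geq k\}=\{S_\beta(k-1)\leq n\}$ together with the crude inclusion $\{S_\beta(k-1)\leq n\}\subset\{\max_{i\leq k-1}X_i\leq n\}$, which already yields the exponential bound $(1-\overline{F_\beta}(n))^{k-1}$; the renewal asymptotic $\overline{F_\beta}(n)\,U_\beta(n)\to\kappa$ then closes the argument. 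Your route avoids any external black box and is entirely self-contained, at the cost of relying on the positivity of the jumps (which the paper also uses, to identify $T_W$ with the range cardinality). The Pruitt--Taylor approach would generalize to non-monotone walks, but in the present strictly increasing setting your argument is both shorter and more transparent.
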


\begin{proof}[Proof of Lemma \ref{lem:cardinality-estimate-range-of-S-beta}]
    \phantom{blank}

    \textbf{Part (i).} Note that 
    \begin{align*}
      \left|  \mathsf{Range}_\beta \cap \{0,\ldots,n\} \right| = \sum_{i=0}^n \mathbf{1}(\boldsymbol{S}_\beta \text{ renews at } i),
    \end{align*}
    and hence taking expectations on both sides gives the desired equality.

    \vspace{1em}

    \textbf{Part (ii).} For any finite set $W \subset \bbz$ and $k\in \bbn$, we define its \textit{sojourn time} as 
    $$
    T_W (k) = | \{ 0 \leq j \leq k: S_\beta (j) \in W \} |.
    $$
    Denote by $D(W)=\{ x-y: x , y \in W  \}$ the \textit{difference set}. Then, for every $\delta \in (0,1)$, there exists $\lambda_0=\lambda_0(\delta)$ that does not rely on $W$, such that whenever $\lambda > \lambda_0$ and $0<\EE T_{D(W)} (k)< \infty$, we have 
    $$
    \PP  \left(   T_B(k) \geq \lambda  \EE T_{D(W)} (k) \right)  \ \leq e^{-\lambda \delta},
    $$
    see Lemma 3.1 in \cite{pruitt:taylor:1969:article}.  Let $W=\{0,\ldots,n\}$ for our purpose.  We obviously have $T_W=T_{D(W)} =  \mathsf{Range}_\beta \cap \{0,\ldots,n\}$ because $\boldsymbol{S}_\beta$ is strictly increasing. And we have $0<\EE T_{D(W)} (k)< \infty$ by part (i). Thus,  \eqref{eq:upper-bounds-on-cardinality-single-range} follows by taking $\lambda = c \log n$ for some sufficiently large $c$ so that $e^{-\lambda \delta} \leq n^{-c}$.
\end{proof}

\subsubsection{Simutaneous Renewal}
\label{subsubsec:simultaneous-renewal}

For $i=1,\ldots, m+1$, let $\boldsymbol{S}_{\beta,i}=(S_{\beta,i}(t): t\in \bbn_0 )$ be independent copies of the $\boldsymbol{S}_\beta = (S_\beta (t): t \in \bbn_0)$ above with ranges denoted by
\begin{equation}
    \label{eq:range-of-(m)-renewal-processes}
    \mathsf{Range}_{\beta,i}  = \{ S_{\beta,i}(0),  S_{\beta,i}(1),  S_{\beta,i}(2), \ldots   \}.
\end{equation}
\textbf{In addition, we shall now assume that \eqref{eq:local-range-of-beta} holds until the end of \S \ref{sec:DNRMC}.}

\vspace{1em}

Consider first the  simultaneous renewal times
\begin{equation}
   \bigcap_{i=1}^m    \mathsf{Range}_{\beta,i}
    \label{eq:simultaneous-renewal-times-m-renewal-process;1}.
\end{equation}
for all $m$ individual renewal processes $\boldsymbol{S}_{\beta,i}, i=1,\ldots, m$. 
From Lemma A.1 in \cite{samorodnitsky:wang:2019:article}, the simultaneous renewal epoch has the tail distribution  
\begin{equation}
    \label{eq:m-simultaneous-epoch-distribution;1}
    \overline{F_{\beta_\ast}} (n)= n^{-\beta_\ast} L_{\beta_\ast}(n).
\end{equation}
with the $\betast$ in \eqref{eq:local-range-of-beta} and 
 \begin{equation}
      \label{eq:m-simultaneous-epoch-distribution;2}
      \quad  L_{\betast} (n) \sim \frac{\Big(\Gamma(\beta) \Gamma(1-\beta) L_\beta(n) \Big)^m}{\Gamma(\betast) \Gamma(1-\betast)}.
 \end{equation}

\vspace{1em}

We shall also need the  simultaneous renewal times
\begin{equation}
    \bigcap_{i=1}^{m+1}   \mathsf{Range}_{\beta,i} 
    \label{eq:intersection-(m+1)-renewal-process-range-containing-zero;1},
\end{equation}
for all $\boldsymbol{S}_{\beta,i}, i=1,\ldots, m+1$. The intersection 
 \eqref{eq:intersection-(m+1)-renewal-process-range-containing-zero;1}
corresponds to the range of a terminal renewal process. The key is to  check that
$
\sum_{n=1}^\infty \big(u_\beta (n) \big)^{m+1} < \infty 
$
under condition 
\eqref{eq:local-range-of-beta}. We leave it to the reader. Therefore we derive the existence of  following 
 probability 
\begin{equation}
    \label{eq:escape-probability-(m+1)-intersections}
    p_\mathsf{escape} =
    \PP \left(  
    \bigcap_{i=1}^{m+1} \mathsf{Range}_{\beta,i}  = \{0 \}  
    \right  ) \in (0,1).
\end{equation}


\subsubsection{Convergence of Cardinalities}
\label{subsubsec:convergence-cardinality}

With respect to \eqref{eq:m-simultaneous-epoch-distribution;2}, let  $(c_{n}, n\in \bbn)$ be the sequence such that 
\begin{equation}
\label{eq:constant-stable-distribution-cn}
   \lim_{n\to \infty} n \overline{F_\betast}(c_n) = C_{\betast} \triangleq
\left(  \int_0^\infty x^{-\betast} \sin x dx \right) ^ {-1} = \frac{1-\betast}{\Gamma(2-\betast)\cos(\pi \betast / 2)}. 
\end{equation}
For convenience, we shall denote  by 
\begin{align}
    &\boldsymbol{S}_\betast = \big(S_\betast (t): t\in \bbn_0 \big), \quad 
    \mathsf{Range}_\betast = \{  S_\betast (0), S_\betast (1), \ldots \}; 
     \label{eq:simultaneous-renewal-times-m-renewal-process;2} \\
    &\mathsf{Range}_\betast = \bigcap_{i=1}^m  \mathsf{Range}(\boldsymbol{S}_{\beta,i}).
     \label{eq:simultaneous-renewal-times-m-renewal-process;3} 
\end{align}
the $m$-simultaneous renewal process and its range.
The goal is to establish the weak convergence of the cardinality 
$| \mathsf{Range}_{\betast} \cap \{0,\ldots,n \}  |$ for $n\to \infty$. We shall first recall the 
weak convergence of $\boldsymbol{S}_\betast$ to the $\betast$-stable subordinator.

\vspace{1em}

\begin{lemma}[Theorems 4.5.1 and 4.5.3 in \cite{whitt:2002:book}]
\label{prop:convergence-to-stable-subordinator}
\phantom{blank}

Let
$\{ \mathscr{Y}_\betast (t) , t \in \reals_+ \}$ be the $\betast$-stable subordinator   in \S \ref{subsubsec:subordinator-and-regenerative-sets}, then
\begin{equation}
    \label{eq:simultaneous-returns-convergence-to-stable-levy-motion}
    \left \{ \frac{ S_{\betast} ( \lfloor nt \rfloor ) }{ c_n }  : t \in \bbr_+
\right \}
\xLongrightarrow{(D(\bbr_+), J_1)} 
\{ \mathscr{Y}_{\betast}(t): t \in \bbr_+ \}.
\end{equation}

\end{lemma}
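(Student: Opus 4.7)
The plan is to invoke the classical functional limit theorem for partial sums of i.i.d.\ positive random variables whose common distribution lies in the domain of attraction of a one-sided $\betast$-stable law. Recall that $\boldsymbol{S}_\betast$ is a (zero-delayed) renewal process, so it coincides in law with the partial sum process of i.i.d.\ positive increments $(\xi_k)_{k\geq 1}$ whose common tail is $\overline{F_\betast}$ given in \eqref{eq:m-simultaneous-epoch-distribution;1}--\eqref{eq:m-simultaneous-epoch-distribution;2}. Since $\betast \in (0,1/2) \subset (0,1)$, the increments have infinite mean, and their tail is regularly varying of index $-\betast$.

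First, I would verify the one-dimensional convergence $S_\betast(n)/c_n \Rightarrow \mathscr{Y}_\betast(1)$. By regular variation of $\overline{F_\betast}$, the increments $\xi_k$ lie in the domain of attraction of the totally skewed $\betast$-stable law $S_\betast(1,1,0)$. The centering can be taken to be zero because $\betast<1$, and the standard normalization is any sequence $c_n$ satisfying $n\PP(\xi_1 > c_n) \to C_\betast$, which is precisely \eqref{eq:constant-stable-distribution-cn}. The specific value of $C_\betast$ is exactly the one compatible with the Laplace transform of the standard $\betast$-stable subordinator recalled in \S\ref{subsubsec:subordinator-and-regenerative-sets}, so the limit marginal is $\mathscr{Y}_\betast(1)$ with the correct scale.

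Next, to upgrade from marginal to finite-dimensional convergence, I would note that increments of $S_\betast(\lfloor nt\rfloor)/c_n$ over disjoint time intervals $[t_{i-1}, t_i]$ are independent with the same scaling as $(t_i-t_{i-1})^{1/\betast}$, and the scaling property $\mathscr{Y}_\betast(t)\eid t^{1/\betast}\mathscr{Y}_\betast(1)$ together with independent increments of the stable subordinator then matches the limit. Finally, to upgrade to $J_1$ functional convergence on $D(\mathbb{R}_+)$, I would invoke the general heavy-tailed invariance principle: for i.i.d.\ increments attracted to an $\alpha$-stable law with $\alpha\in(0,1)$ (here positive increments, hence a subordinator in the limit), the rescaled partial sum process converges in $(D(\mathbb{R}_+), J_1)$ to the corresponding stable Lévy motion. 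This is the content of Theorems~4.5.1 and 4.5.3 in \cite{whitt:2002:book}, which the lemma explicitly cites.

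The only subtle point would be the choice of topology: $J_1$ is appropriate here because the normalized partial sum process has only single large jumps (rather than clusters of comparable jumps piling up at a single point, which is the typical failure mode that forces one to retreat to $M_1$); this holds because the $\xi_k$ are genuinely i.i.d.\ with a single regularly varying tail, so big jumps occur at distinct asymptotic times. Since the claim is stated as a citation of \cite{whitt:2002:book}, no further argument beyond matching hypotheses is required.
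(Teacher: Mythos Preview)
Your proposal is correct and matches the paper's approach: the paper does not supply its own proof but simply states the lemma as a direct citation of Theorems~4.5.1 and~4.5.3 in \cite{whitt:2002:book}, and your argument is precisely the verification that the hypotheses of those theorems are met (i.i.d.\ positive increments with regularly varying tail of index $-\betast\in(-1,0)$, normalizing sequence $c_n$ from \eqref{eq:constant-stable-distribution-cn}, no centering needed). Nothing further is required.
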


\vspace{1em}

  We next note that, for any integer $k\in \bbn_0$, there is the identity  
  \begin{equation}
      \label{eq:range-and-inversed-renewal-identity}
      \Big|  \mathsf{Range}_\betast \cap \{0,1,\ldots, k   \}  \Big|  = (S_\betast )^\leftarrow (k).
  \end{equation}
Hence, to prove the weak convergence of cardinalities, it suffices to take inverses on both sides of \eqref{eq:simultaneous-returns-convergence-to-stable-levy-motion}. For this reversed convergence, we define a new scaling sequence $(\vartheta_n: n \in \bbn)$ as
\begin{equation}
    \label{eq:sequence-vartheta-n}
    \vartheta_n =  C_{\betast}  \frac{n^\betast}{L_\betast(n)}.
\end{equation}
It is tedious but straightforward to verify the asymptotic relation 
\begin{equation}
    \label{eq:vatheta-n-and-c-n-are-inverse-of-each-other}
    c_{\lfloor \vartheta_n \rfloor} \sim \vartheta_{ \lfloor c_n  \rfloor } \sim n.
\end{equation}

\vspace{1em}

\begin{proposition} 
\label{prop:convergence-to-MittagLeffler-process-J1-topology}
\phantom{blank}

Let $\{ \mathscr{Z}_\betast(t):t\in \bbr_+  \}$ be the Mittag-Leffler process \eqref{eq:Mittag-Leffler-process} of index $\betast$, then
\begin{equation}
     \label{eq:inverse-convergence-to-MittagLeffler-process-J1-Skorokhod}
     \left\{
        \frac{ (S_\betast)^\leftarrow (\lfloor nt \rfloor )}{\vartheta_n}: t \in \reals_+
        \right\} 
        \xLongrightarrow{(D(\reals_+), J_1)}
        \left\{ \mathscr{Z}_\betast(t):t\in \bbr_+  \right\}.
\end{equation}
\end{proposition}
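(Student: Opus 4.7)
The plan is to deduce \eqref{eq:inverse-convergence-to-MittagLeffler-process-J1-Skorokhod} from the forward convergence in Lemma \ref{prop:convergence-to-stable-subordinator} by applying a continuous mapping theorem to the Skorokhod inverse functional, followed by a change of index via \eqref{eq:vatheta-n-and-c-n-are-inverse-of-each-other}.

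First, I would set $f_n(s) \triangleq S_\betast(\lfloor n s \rfloor)/c_n$, so that Lemma \ref{prop:convergence-to-stable-subordinator} reads $f_n \Rightarrow \mathscr{Y}_\betast$ in $(D(\reals_+), J_1)$. The inverse map $\phi \mapsto \phi^\leftarrow$ is known to be continuous (in $J_1$) at non-decreasing paths that are strictly increasing and unbounded; see, for example, Whitt (2002, \S 13.6). Any stable subordinator almost surely has strictly increasing, divergent trajectories, so this continuity applies at the limit $\mathscr{Y}_\betast$. The continuous mapping theorem then gives
$$
    f_n^\leftarrow \Rightarrow \mathscr{Y}_\betast^\leftarrow = \mathscr{Z}_\betast
$$
in $(D(\reals_+), J_1)$, and since $\mathscr{Z}_\betast$ is a.s.\ continuous, this is equivalent to locally uniform convergence.

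Next, I would note that (up to an $O(1/n)$ discretization error absorbed by the scaling)
$$
    f_n^\leftarrow(t) = \frac{1}{n}\, (S_\betast)^\leftarrow(c_n t) + o(1),
$$
so that $(S_\betast)^\leftarrow(c_n \cdot)/n \Rightarrow \mathscr{Z}_\betast$ as processes. To arrive at the target normalization, I would set $N = \lfloor c_n \rfloor$, which by \eqref{eq:vatheta-n-and-c-n-are-inverse-of-each-other} forces $\vartheta_N \sim n$. A Slutsky-type argument combined with the monotonicity of $(S_\betast)^\leftarrow$ and the regular variation of $c_n$ and $\vartheta_n$ then transfers the convergence to $(S_\betast)^\leftarrow(\lfloor N t \rfloor)/\vartheta_N \Rightarrow \mathscr{Z}_\betast(t)$, which is the claim.

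I expect the main obstacle to be this final reparametrization $c_n \leftrightarrow \vartheta_n$, where one must carry out the change of variable uniformly in $t$ over compact sets while controlling the flooring; the continuity of the inverse functional in $J_1$ is standard but likewise hinges on the strict monotonicity of $\mathscr{Y}_\betast$, which is a basic property of stable subordinators.
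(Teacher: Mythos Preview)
Your proposal is correct and follows essentially the same route as the paper: define $G_n(t)=S_{\betast}(\lfloor nt\rfloor)/c_n$, pass to the inverse via a continuity result for the Skorokhod inverse map, and then reparametrize using \eqref{eq:vatheta-n-and-c-n-are-inverse-of-each-other}. The only cosmetic differences are that the paper cites Whitt~(1971) to obtain $M_1$-convergence of the inverse and then upgrades to $J_1$ through the a.s.\ continuity of $\mathscr{Z}_{\betast}$, whereas you invoke $J_1$-continuity of the inverse map directly at strictly increasing paths; and the paper performs the index change by substituting $n\mapsto\lfloor\vartheta_n\rfloor$ rather than your $N=\lfloor c_n\rfloor$, which is the same manoeuvre read in the opposite direction.
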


\begin{proof}[Proof of Proposition \ref{prop:convergence-to-MittagLeffler-process-J1-topology}]
     \phantom{blank}

     For convenience, we write 
     $G_n (t) =   S_\betast ( \lfloor nt \rfloor ) / c_n$.
     Combine Lemma \ref{prop:convergence-to-stable-subordinator} and the Theorem in \cite{whitt:1971:article}, we immediately obtain that
     \begin{equation*}
         \{ (G_n)^\leftarrow (t): t \in \reals_+  \}
        \xLongrightarrow{(D(\reals_+), M_1)}
        \left\{  \mathscr{Y}_{\betast} ^\leftarrow (t): t \in \reals_+  \right\} = 
        \left\{ \mathscr{Z}_\betast(t):t\in \bbr_+  \right\}.
     \end{equation*}
     Because $\left\{ \mathscr{Z}_\betast(t): t \in \reals_+  \right\}$ is a.s.\ continuous, convergences in $M_1$ and $J_1$ topologies coincide. We therefore conclude that
$$
         \{ (G_n)^\leftarrow (t): t \in \bbr_+  \}
        \xLongrightarrow{(D(\reals_+), J_1)}
        \left\{  \mathscr{Z}_\betast(t):  t \in \bbr_+  \right\}.
$$
A simple algebraic manipulation yields that $(G_n)^\leftarrow (t)=\frac{1}{n} (S_\betast)^\leftarrow (c_n t)$.
By replacing  $n$ by   $\lfloor \vartheta_n \rfloor$, we get
$$
(G_{ \lfloor \vartheta_n \rfloor } )^\leftarrow (t) = 
\frac{1}{ \lfloor \vartheta_n \rfloor} (S_\betast)^\leftarrow (c_{ \lfloor \vartheta_n \rfloor } t)
\sim \frac{1}{  \vartheta_n } (S_\betast)^\leftarrow (c_{ \lfloor \vartheta_n \rfloor } t).
$$
By referring to  the definition of the $J_1$ metric and \eqref{eq:vatheta-n-and-c-n-are-inverse-of-each-other}, 
  we thus obtain \eqref{eq:inverse-convergence-to-MittagLeffler-process-J1-Skorokhod}.
\end{proof}

\subsubsection{Convergence of Capacities}
\label{subsubsec:convergence-capacity}

For any transient random walk $\boldsymbol{S}=(S(t): t \in \bbn_0)$ on $\bbz$, we recall from  \cite{spitzer:1964:book} \S 25 \textbf{D3}  the \textit{capacity} functional
\begin{equation}
    \label{eq:capacity-of-a-set-A}
    \mathsf{Capacity}(A) = \sum_{a\in A} \PP(\boldsymbol{S} \text{ escapes from } A \, | \, S(0)=a   ), \quad \text{ for any finite } A \subset \bbz.
\end{equation}
We are interested in capacities of random sets.
 First, truncate $\mathsf{Range}_\betast$ into a triangular array as 
\begin{equation}
    \mathsf{Range}_\betast(n_1, n_2) = \{ S_{\betast}(n_1), \ldots, S_{\betast}(n_2-1)   \}, \quad \forall \, 0 \leq n_1 < n_2.
\end{equation}
 Second, we take $\boldsymbol{S}=\boldsymbol{S}_{\beta,m+1}$ which is independent of $\boldsymbol{S}_{\beta,1},\ldots, \boldsymbol{S}_{\beta,m}$. This yields random capacities
    \begin{align*}
     \mathsf{CR}_{n_1, n_2} 
    \triangleq &  \mathsf{Capacity} \Big( \mathsf{Range}_\betast ( n_1, n_2) \Big) \\
    = &  \sum_{a\in \mathsf{Range}_\betast ( n_1, n_2) } \PP \left(\boldsymbol{S} \text{ escapes from }  \mathsf{Range}_\betast(n_1, n_2) \, | \, S(0)=a  \right ).
\end{align*}

\vspace{1em}

\begin{proposition}\label{prop:convergence-of-range-capacities-subadditive-argument}
\phantom{blank}

    Let $p_\mathsf{escape}$ be the escape probability \eqref{eq:escape-probability-(m+1)-intersections}, then 
    \begin{equation}
        \label{eq:convergence-of-range-capacities-subadditive-argument}
        \frac{  \mathsf{CR}_{0, n} }{n} 
        \xlongrightarrow{a.s. \; \& \; L^1} p_\mathsf{escape}.
    \end{equation}
\end{proposition}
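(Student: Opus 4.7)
The plan is to exploit that $\boldsymbol{S}:=\boldsymbol{S}_{\beta,m+1}$ is strictly increasing and $\bbn_0$-valued, which reduces the two-sided escape problem to a one-sided one, and then to invoke Birkhoff's ergodic theorem via a sandwich estimate. The upgrade from almost sure to $L^1$ convergence will be automatic from the uniform bound $0\le\mathsf{CR}_{0,n}/n\le 1$ and bounded convergence.

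First I would set $\mathcal{G}=\sigma(\boldsymbol{S}_{\beta,1},\ldots,\boldsymbol{S}_{\beta,m})$ and, for $k=0,\ldots,n-1$, write
$$
\tau_n(k):=\PP\bigl(\boldsymbol{S}\text{ escapes from }\mathsf{Range}_\betast(0,n) \,\big|\, S(0)=S_\betast(k),\,\mathcal{G}\bigr),
$$
so that $\mathsf{CR}_{0,n}=\sum_{k=0}^{n-1}\tau_n(k)$. Since $\boldsymbol{S}$ is strictly increasing from $S_\betast(k)$, the only points of $\mathsf{Range}_\betast(0,n)$ it can ever revisit are $S_\betast(k),\ldots,S_\betast(n-1)$; after the translation $S_\betast(k)\mapsto 0$ and using the spatial homogeneity of the renewal process,
$$
\tau_n(k)=\PP\Bigl(\mathsf{Range}(\tilde{\boldsymbol{S}})\cap V_k^{(n-1-k)}=\emptyset \,\Big|\, \mathcal{G}\Bigr), \quad V_k^{(J)}:=\{S_\betast(k+j)-S_\betast(k):1\le j\le J\},
$$
where $\tilde{\boldsymbol{S}}$ is an independent copy of $\boldsymbol{S}$ starting at $0$.

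For $J\in\bbn\cup\{\infty\}$ I would then define $\xi_k^{(J)}:=\PP(\mathsf{Range}(\tilde{\boldsymbol{S}})\cap V_k^{(J)}=\emptyset\mid\mathcal{G})$. Because $\tilde{\boldsymbol{S}}$ is independent of $\mathcal{G}$, $\xi_k^{(J)}$ is a deterministic functional of the increments $(\Delta_{k+1},\Delta_{k+2},\ldots)$ of $\boldsymbol{S}_\betast$, which are i.i.d.\ by Lemma~A.1 of \cite{samorodnitsky:wang:2019:article}. Hence $(\xi_k^{(J)})_{k\ge 0}$ is stationary and ergodic with mean
$$
p_J:=\PP\Bigl(\mathsf{Range}(\tilde{\boldsymbol{S}})\cap\{S_\betast(1),\ldots,S_\betast(J)\}=\emptyset\Bigr),
$$
and the defining events decrease as $J\uparrow\infty$ to $\{\mathsf{Range}(\tilde{\boldsymbol{S}})\cap\mathsf{Range}_\betast=\{0\}\}$, so $p_J\downarrow p_\mathsf{escape}$ by \eqref{eq:escape-probability-(m+1)-intersections}. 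Monotonicity of $V_k^{(J)}$ in $J$ yields the sandwich $\xi_k^{(\infty)}\le\tau_n(k)\le\xi_k^{(J)}$ (valid for $k\le n-1-J$), hence
$$
\sum_{k=0}^{n-1}\xi_k^{(\infty)}\;\le\;\mathsf{CR}_{0,n}\;\le\;\sum_{k=0}^{n-1-J}\xi_k^{(J)}+J.
$$
Birkhoff's ergodic theorem applied to the two stationary ergodic sequences gives $n^{-1}\sum_{k=0}^{n-1}\xi_k^{(\infty)}\to p_\mathsf{escape}$ and $n^{-1}\sum_{k=0}^{n-1-J}\xi_k^{(J)}\to p_J$ almost surely; sandwiching and sending $J\to\infty$ closes \eqref{eq:convergence-of-range-capacities-subadditive-argument}.

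The main delicate point I expect is the one-sided reduction in Step~1 — making precise that strict monotonicity of the renewal process $\boldsymbol{S}$ converts the two-sided escape problem into one depending only on shifts of the i.i.d.\ $\boldsymbol{S}_\betast$-increments, so that stationary ergodic theory applies directly. Once this is in place everything else is routine. I note in passing that the title of the proposition (``subadditive argument'') hints at an alternative route via Kingman's subadditive ergodic theorem applied to the triangular array $(n_1,n_2)\mapsto\mathsf{CR}_{n_1,n_2}$, but the direct sandwich above seems cleaner and sidesteps the need to verify superadditivity of the capacity functional.
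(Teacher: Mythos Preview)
Your proof is correct and takes a genuinely different route from the paper's. The paper applies Kingman's subadditive ergodic theorem to the triangular array $(\mathsf{CR}_{n_1,n_2})_{0\le n_1<n_2}$, invoking the classical subadditivity of the capacity functional (Spitzer \S 25, \textbf{P11}) and the i.i.d.\ structure of the blocks $\mathsf{Range}_\betast(nk,(n+1)k)$; the limit is then asserted to be $p_{\mathsf{escape}}$ without the computation being spelled out. Your argument instead exploits the strict monotonicity of $\boldsymbol{S}_{\beta,m+1}$ to reduce escape from $\mathsf{Range}_\betast(0,n)$ starting at $S_\betast(k)$ to a one-sided problem depending only on the increments $(\Delta_{k+1},\Delta_{k+2},\ldots)$, and then sandwiches $\mathsf{CR}_{0,n}$ between two Birkhoff averages of bounded stationary ergodic sequences. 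This is more elementary (no Kingman, no capacity subadditivity), and it makes the identification of the limit transparent: $p_J\downarrow p_{\mathsf{escape}}$ is explicit. The paper's approach, by contrast, is shorter to state and would extend verbatim to settings where the auxiliary walk is not monotone and a one-sided reduction is unavailable, since it relies only on general properties of capacity. Your closing remark correctly anticipates exactly what the paper does.
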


\begin{proof}[Proof of Proposition \ref{prop:convergence-of-range-capacities-subadditive-argument}]
    \phantom{blank}

    We note that the triangular array $\{ \mathsf{CR}_{n_1, n_2}, 0 \leq n_1 <n_2 \}$ satisfies the following  properties.   
    \begin{itemize}
        \item[$(\rom1)$] For any $n, k \in \bbn$, $\mathsf{CR}_{0,n}+\mathsf{CR}_{n,n+k} \geq \mathsf{CR}_{0,n+k}$. This follows from the fact that the capacity functional  is sub-additive; see \textbf{P11} in 
        \cite{spitzer:1964:book} \S 25.
        \item[$(\rom2)$] For any $k$, the sequence $\{ \mathsf{CR}_{nk, (n+1)k}, n \geq 1 \}$ is i.i.d., hence stationary and ergodic. 
        \item[$(\rom3)$] For any $k$, the sequence $\{ \mathsf{CR}_{k, n+k}, n \geq 1 \} \eid \{ \mathsf{CR}_{0, n}, n \geq 1 \}$.
        \item[$(\rom4)$] $\mathsf{CR}_{0, 1} =  (\mathsf{CR}_{0, 1})^+ \geq 0$, $\EE   \mathsf{CR}_{0, 1}  <1$ and $\EE ( \mathsf{CR}_{0, n} ) \geq 0 $. 
    \end{itemize}
    Properties $(\rom1)$ to $(\rom4)$ match up with the subadditive ergodic theorem, see \cite{durret:PTE5:2019} Theorem 4.6.1. We automatically obtain the a.s.\ and $L^1$ convergences to some non-negative real number, which is directly seen to be $p_\mathsf{escape}$.
\end{proof}

\subsubsection{Proofs in \S \ref{subsec:intersections-of-return-times}}
\label{subsubsec:proof-in-intersection-return-times}

Let us now back to the delayed proofs of \S \ref{subsec:intersections-of-return-times}. We first note that each random set $I_{j,n}$ is a truncated range of a delayed renewal process. More precisely, 
under the framework of \S \ref{subsubsec:simultaneous-renewal}, 
let us take renewal process $\boldsymbol{S}_{\beta,i},i=1,\ldots,m+1$ independent of $ I_{i,n}, i=1,\ldots,m+1$. Then,  due to the convention $\min \emptyset  = \infty$, we always have
\begin{align}
    & \left( I_{i,n}  \right)_{i=1,\ldots,m+1} \eid  
    \left( ( \min I_{i,n} + \mathsf{Range}_{\beta,i } ) \cap \{0,\ldots,n\} \right)_{i=1,\ldots,m+1} ,
    \label{eq:structure-Iin} \\
    & \bigcap_{j=1}^m I_{j,n} \eid \left(\min \bigcap_{j=1}^m I_{j,n} + \mathsf{Range}_\betast \right) \cap \{0,\ldots, n\}.
    \label{eq:structure-Iin-m-intersection}
\end{align}
The above two observations are needed  in the following proofs.

\vspace{1em}

\begin{proof}[Proof of Proposition \ref{prop:more-intersection-properties}]

\phantom{blank}

    \textbf{Part (i).}  
    Fix any $N \in \bbn$. 
    Let us consider indices in groups $\{km+1,\ldots,(k+1)m\} $ for $k=0,\ldots,K$.
    It follows that
    \begin{align*}
        & \PP \left(  \exists \, 1\leq j_{1} < \cdots <  j_m \leq Km  \text{ such that } nB \cap  \bigcap_{i=1}^m I_{j_i,n}  \neq \emptyset   \right) \\
        \geq & \PP \left(  \exists \, k=0,\ldots,  N - 1   \text{ such that } nB \cap  \bigcap_{i=1}^m I_{km+i,n}  \neq \emptyset   \right) = 1 - q_n ^K,
          \end{align*}
        where $  q_n \triangleq \PP( nB \cap I_{1,n} \cap \cdots \cap  I_{m,n} = \emptyset ) $.
    By  \eqref{eq:probability-non-empty-m-intersections} in Lemma  \ref{lem:intersections-from-samorodnitsky-wang-2019AoP} $(\rom2)$, $\lim_{K\to \infty} \lim_{n\to \infty} q_n^K = 0$, which finishes the proof.

\vspace{1em}

    \textbf{Part (ii).}  We apply the union bound twice to get that  
    \begin{align*}
        & \PP\left( \exists \, 1\leq 
       j_{1,n} < \cdots < j_{m+1,n} < n^\delta \text{ such that }\bigcap_{i=1}^{m+1} I_{j_i,n}  \neq \emptyset  \right) \\
       \leq & { \lfloor n^\delta \rfloor \choose m+1}  \cdot \PP\left( I_{1,n} \cap \cdots \cap I_{m+1,n} \neq \emptyset \right) \\
       \lesssim &  n^{(m+1)\delta}   \cdot (n+1) \cdot \PP\left( 0 \in I_{1,n} \cap \cdots \cap I_{m+1,n} \right) 
       \lesssim    
       \frac{ n^{(m+1)\delta + 1}}{ w_n^{m+1} } \to 0.
    \end{align*}
    The last limit is zero because $w_n \in \mathsf{RV}_{1-\beta}$ and   $(m+1) \delta + 1 - (m+1)(1-\beta) <0$.

\vspace{1em}

\textbf{Part (iii).} 
From  \eqref{eq:structure-Iin-m-intersection}, 
 we obtain the stochastic dominance 
\begin{equation}
    \label{eq:stochastic-dominance-cardinatlity-intersection}
    \left|  \bigcap_{j=1}^m I_{j,n} \right| \leq_P 
    \left|
     \mathsf{Range}_\betast \cap \{0,\ldots,n \}
    \right|.
\end{equation}
Then  \eqref{eq;m-intersections-cardinality-control;log-power-law-tail} follows from Lemma \ref{lem:cardinality-estimate-range-of-S-beta} $(\rom2)$ since 
$\vartheta_n \asymp 1 \big/ \overline{F_\betast} (n) \asymp U_\betast (n)$.

\vspace{1em}

\textbf{Part (iv).} We proceed  \eqref{eq:structure-Iin-m-intersection} to note that 
\begin{align*}
     \left| (0,nb) \cap \bigcap_{i=1}^m  I_{i,n} \right| \eid &  \left| \mathsf{Range}_\betast \cap  \left\{  0,\ldots, \lfloor nb \rfloor - \min \bigcap_{j=1}^m I_{j,n}  \right\} \right|  \\
     = &  (S_\betast )^\leftarrow (n \Theta_n),
\end{align*}
where 
$$
 \Theta_n \triangleq  \max \left \{ \frac{\lfloor nb \rfloor}{n} - \frac{ \min \bigcap_{j=1}^m I_{j,n}}{n}, 0 \right\}.
$$
By     \eqref{eq:convergence-of-first-simultaneous-intersection-time}, it direct to show that 
$$
\Theta_n
\xLongrightarrow{ \PP \left( \cdot \, \big| \, (0,nb) \cap \bigcap_{i=1}^m  I_{i,n} \neq \emptyset \right)}
b( 1- Q_\betast).
$$
Notice that 
 $\{ \mathscr{Z}_\betast (t):t \in \bbr_+  \}$  is a.s.\ continuous,  therefore the definition of $J_1$-topology implies 
 $$
 \frac{ (S_\betast )^\leftarrow (n \Theta_n)}{\vartheta_n} 
 \Rightarrow 
 \mathscr{Z} \big( b( 1- Q_\betast) \big) \eid b^\betast \mathscr{Z} ( 1- Q_\betast).
 $$
\end{proof}

\vspace{1em}

\begin{proof}[Proof of Theorem \ref{thm:key-preliminary-theorem-far-away-intersection}]
    \phantom{blank}

 Recall that the dynamical system $(\mathbb{Y},\mathcal{Y},\mu,\tau)$ is measure-preserving. Therefore, for any $\delta\in \bbr$ such that $\delta+B\subset [0,1]$, we shall have $p_n(B)=p_n(B+\delta)$. 
 It suffices to take $B=(0,b)$ or $[0,b)$. We take $B=[0,b)$ because two proofs are almost identical.  

 \vspace{1em}

 To establish the weak convergence in the first entry, we first 
 apply the last intersection decomposition. Namely, 
  \begin{align*}
     p_n (B) 
     = & \sum_{ a\in nB \cap \bigcap_{j=1}^m I_{j,n} } \PP\left( a \in I_{m+1,n} \, | \, \mathscr{F}_{n,B} \right) \\
     & \phantom{  \sum_{ a\in nB \cap \bigcap_{j=1}^m I_{j,n} } } \cdot \PP\left(  a = \max I_{m+1,n} \cap  nB \cap \bigcap_{j=1}^m I_{j,n} \, \bigg| \,  a \in I_{m+1,n}, \mathscr{F}_{n,B} \right) \\
     = & \frac{1}{w_n} \sum_{ a\in nB \cap \bigcap_{j=1}^m I_{j,n} }  \PP\left(  a = \max I_{m+1,n} \cap  nB \cap \bigcap_{j=1}^m I_{j,n} \, \bigg| \,  a \in I_{m+1,n}, \mathscr{F}_{n,B} \right).
 \end{align*}
 With respect to \eqref{eq:structure-Iin}, \eqref{eq:structure-Iin-m-intersection} and the strong Markov property of $\boldsymbol{S}_{\beta,m+1}$, we can rewrite the last identity as
 \begin{align*}
     p_n (B) = & \frac{1}{w_n}\sum_{ a\in nB \cap \bigcap_{j=1}^m I_{j,n} } \PP\left( \boldsymbol{S}_{\beta,m+1} \text{ escapes }  nB \cap \bigcap_{j=1}^m I_{j,n} \, \bigg| \, S_{\beta,m+1}(0)=a, \mathscr{F}_{n,B} \right).
 \end{align*}
Therefore, on $(  \Omega_{n,B}, \mathscr{F}_{n,B}, \boldsymbol{P}_{n,B})$, we have 
\begin{equation}
    \label{eq:pnB-as-random-capacity}
     p_n (B) \eid 
      \frac{1}{w_n} \mathsf{Capacity} \left( \left \{0,\ldots,\lfloor nb \rfloor - \min \bigcap_{j=1}^m I_{j,n} \right \} \cap \mathsf{Range}_\betast    \right) 
\end{equation}
Combine   \eqref{eq:pnB-as-random-capacity}, \eqref{eq;m-intersections-cardinality-weak-convergence} and 
\eqref{eq:convergence-of-range-capacities-subadditive-argument}, 
we obtain
\begin{equation}
     \label{eq:key-preliminary-joint-far-away-intersection-first-coordinate}
     \frac{ w_n p_n (B) }{ \vartheta_n}
   \Rightarrow
 p_\mathsf{escape} b^\betast \mathscr{Z}_\betast (1-Q_\betast).
\end{equation}

\vspace{1em}

 To prove the jointly weak convergence,  we choose product interval
 $(c_1,c_2]\times (c_3,c_4]\in (0,\infty)^2$. It the follows that 
    \begin{align*}
        & \PP\left(  \frac{p_n(B) w_n }{\vartheta_n} \in (c_1,c_2] , \ell_{1,n} p_n \in (c_3,c_4]  \right) \\
        = & \PP\left(  \frac{p_n(B) w_n }{\vartheta_n} \in (c_1,c_2]  \right)
        \left(   
        (1-p_n (B))^{c_3/p_n} -  (1-p_n(B))^{c_4/p_n}
        \right) \\
        \to & \PP \big( c_1<   p_\mathsf{escape} b^\betast \mathscr{Z}_\betast (1-Q_\betast) \leq c_2 \big) \PP(c_4 < \Gamma \leq c_4 ).
    \end{align*}
The limit holds because of \eqref{eq:key-preliminary-joint-far-away-intersection-first-coordinate}
and  that $p_n(B)$ a.s.\ uniformly goes to zero over  events $\left\{  \frac{p_n(B) w_n }{\vartheta_n} \in (c_1,c_2] \right \}$. We thus establish the joint convergence due to the  $\pi$-$\lambda$ monotone class theorem.
\end{proof}

\section{Extremal Limit Theorems}
\label{sec:extremal-limit-theorems}

\subsection{Formulation of Main Results}
\label{subsec:formulation-main-results}

We first formulate the assumption for the main results.

\vspace{1em}

\begin{assumption}
\label{assumption:main-assumptions-for-the-main-theorem}
\phantom{blank}
\begin{itemize}
    \item[$(\rom1)$]  Let $(\mathbb{Y},\mathcal{Y},\mu,\tau)$ be the dynamical system in \S \ref{subsec:DNRMC-construction}. 
    Assume that the memory parameter
    $$    \beta \in \left( \frac{m-1}{m}, \frac{m}{m+1} \right) , \quad \exists \, m \in \bbn, \quad m \geq 2   $$
    and the renewal epoch distribution $F_\beta$ satisfies Assumption \ref{assumption:memory-parameter-and-return-probabilities}.
    \item[$(\rom2)$]  Let $\mathcal{N}$ be an (independently scattered) infinitely divisible random measure on $(\mathbb{Y}, \mathcal{Y})$ with a constant local characteristic triple $(\sigma^2,\nu,b)$ controlled by $\mu$, see \cite{samorodnitsky:2016:book} \S 3.2. Assume that the tail L\'evy measure $\nu(x,\infty)$ is moderately heavy. In the context of Definition \ref{def:exp-slow-vary-distr}, we write
    \begin{equation}
      \label{eq:assumption-on-levy-tail-nu}
       \nu(x,\infty) = c_\# \overline{H_\#} (x) = c_\# \exp \left( -\int_{1}^x \frac{du}{h(u)}  \right), \quad x_0=1.
    \end{equation}
\end{itemize}
\end{assumption}
Under the Assumption \ref{assumption:main-assumptions-for-the-main-theorem}, the stationary sequences $(X_t: t\in \bbz)$ of the form 
\begin{equation}
\label{eq:sid-sequence}
    X_t = \int_{\mathbb{Y}} \mathbf{1}_A \circ \tau ^t (y )  \mathcal{N} (dy), \quad A = \left\{ y = (y_i)_{i\in \bbz} \in \mathbb{Y}: y_0 = 0 \right \} .
\end{equation}

\vspace{1em}

\begin{remark}
    \label{remark:sid-process}
    \phantom{blank}
    {\rm
    \begin{itemize}
        \item  The process $(X_t: t\in \bbz)$ is indeed stationary and infinitely divisible, see \cite{samorodnitsky:2016:book}. 
        \item  We choose an indicator function $\one_A$ in the stochastic integration, which helps to highlight the main spirit of all arguments. The replacement \eqref{eq:long-range-dependence-and-zero-extremal-index} by a general  function $f$ will not change the proof essentially. In the same logic, we take $x_0=1$ and $\overline{\nu} = c_\# \overline{H_\#}$ instead of $x_0>0$ and $\overline{\nu} \sim c_\# \overline{H_\#}$.
        \item The assumption that  $\beta \in \big( \frac{m-1}{m}, \frac{m}{m+1} \big)$ is important. Our proofs break down on boundary points $\big\{  \frac{m}{m+1}: m\in \bbn \big \}$. We conjecture that a different mode of extremal clustering will appear. 
    \end{itemize}
    }
\end{remark}

\vspace{1em}

 Let $V$ be the function 
 \begin{equation}
     \label{eq:function-V}
      V(z) = 
    \begin{cases}
    \left( 1 / \overline{\nu} \right)^\leftarrow (z),  \quad & z > z_0, \; z_0 \triangleq 1/ \nu(1,\infty) = 1/c_\#, \\
    0, & \text{otherwise}
    \end{cases}.
 \end{equation}
 Set $V_\# = (1 / \overline{H_\#})^\leftarrow$. Due to Assumption \ref{assumption:main-assumptions-for-the-main-theorem}, it follows that 
\begin{equation}
\label{eq:main-assumption-V-sharp-and-V}
    V (z) \sim V_\# (z), \quad V_\# ^\prime (z) \asymp \left[ \exp \left(  \int_{z_0}^z \frac{\zeta(\log u)}{u \log u} du   \right) \right]^\prime
\end{equation}
where notations have the same meanings as used in Definition \ref{def:exp-slow-vary-distr}.
With respect to the above functions $V$  \eqref{eq:function-V} and $h$   \eqref{eq:assumption-on-levy-tail-nu}, and the wandering rate sequence $(w_n: n \in \bbn)$  \eqref{eq:definition-wandering-rate}, and the sequence $(\vartheta_n: n\in \bbn)$ \eqref{eq:sequence-vartheta-n} , we  define normalizing constants 
    \begin{equation}
        \label{eq:extremal-limit-theorems-normalizing-constants}
        b_n = m V(w_n) + V(\vartheta_n), \quad a_n = h \circ V(w_n).
    \end{equation}

\vspace{1em}

\begin{theorem}[Convergence of Random Sup-Measures]
    \label{main-theorem:random-sup-measure-convergence}
    \phantom{blank}
    
    Let $(X(t): t\in \bbz)$ be the sequence  \eqref{eq:sid-sequence} under Assumption \ref{assumption:main-assumptions-for-the-main-theorem} , $(\mathcal{M}_n: n \in \bbn)$ be the empirical random sup-measures  \eqref{eq:stationary-sequence-empirical-rsm} and $\mathcal{M}_\beta$ be the limit random sup-measure  \eqref{eq:limit-rsm-restriction-unit-interval}. If $(a_n, b_n)$ are given by \eqref{eq:extremal-limit-theorems-normalizing-constants}, then, in the space $ \mathsf{SM}([0,1])$,
    \begin{equation}
    \label{eq:main-theorem-convergence-rsm}
        \frac{\mathcal{M}_n(\cdot) - b_n}{a_n} \Rightarrow \mathcal{M}_\beta(\cdot).
    \end{equation}
\end{theorem}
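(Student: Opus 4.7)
My plan is to use the weak convergence criterion \eqref{eq:weak-convergence-criterion-continuity-intervals} for random sup-measures, which reduces the claim to showing that for every finite collection of pairwise disjoint open intervals $I_1, \ldots, I_k \in \mathscr{I}(\mathcal{M}_\beta)$,
$$\left( a_n^{-1}\bigl(\mathcal{M}_n(I_s) - b_n\bigr) \right)_{s=1}^k \;\Rightarrow\; \bigl(\mathcal{M}_\beta(I_s)\bigr)_{s=1}^k.$$
I will start by localizing the Poisson-series expansion of $\mathcal{N}$ to paths that visit $0$ on $\{0,\ldots,n\}$, whose total $\mu$-mass is $w_n$. The large positive Poisson points can then be enumerated as $(U_j, y_j)_{j \ge 1}$ with $U_j = V(w_n/\Gamma_j)$, where $(\Gamma_j)$ is a unit-rate Poisson process on $\reals_+$ and $(y_j)$ are i.i.d.~with law $\mu_n$, independent of $(\Gamma_j)$. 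The return-time sets $\{t \in \{0,\ldots,n\}: (y_j)_t = 0\}$ coincide with the $I_{j,n}$ of \S\ref{subsec:intersections-of-return-times}, so
$$X_t \;=\; \sum_{j \ge 1} U_j \, \mathbf{1}\{t \in I_{j,n}\} \;+\; X_t^{\mathrm{rest}}, \qquad 0\le t\le n,$$
where $X_t^{\mathrm{rest}}$ collects the Gaussian part, the compensator, the negative jumps, and the positive jumps below a slowly growing threshold. A standard truncation argument based on subexponentiality (Propositions \ref{prop:subexponential-from-Levy-measure} and \ref{prop:slowly-varying-subexp-implies-subexponentiality}) will give $\sup_{0 \le t \le n}|X_t^{\mathrm{rest}}| = o_P(a_n)$.

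Second, I will identify the clusters generating $\mathcal{M}_n(I_s)$. By Proposition \ref{prop:more-intersection-properties} parts (i)--(ii), with probability tending to one the maximizer $t^*$ of $X_t$ over $t \in nI_s$ sits in exactly $m$ of the $I_{j,n}$'s of bounded $\Gamma_j$, together with at most one additional $I_{j,n}$ whose $\Gamma_j$ is of order $\vartheta_n$. Using the $\Pi$-variation of $V$ (Proposition \ref{proposition:Gamma-Pi-Variation} (iii)), I will expand, for fixed $j$, $U_j = V(w_n/\Gamma_j) = V(w_n) - a_n \log \Gamma_j + o(a_n)$, so that a bounded-rank cluster of $m$ jumps labeled $j_1 < \cdots < j_m$ contributes $m V(w_n) - a_n \sum_{i=1}^m \log \Gamma_{j_i} + o(a_n)$ to $X_{t^*}$. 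The $V(\vartheta_n)$ piece of the centering comes from the extra $(m+1)$-th cluster member, which I will handle via Theorem \ref{thm:key-preliminary-theorem-far-away-intersection}: conditionally on $\mathscr{F}_{n, I_s}$ and on non-emptiness of the $m$-intersection within $nI_s$, one has $\ell_n(I_s) p_n(I_s) \Rightarrow \Gamma$ jointly with a non-degenerate limit for $w_n p_n(I_s)/\vartheta_n$, whence $\Gamma_{\ell_n(I_s)} \asymp \vartheta_n/w_n$ times a strictly positive random factor. Combined with Proposition \ref{prop:properties-exp-slowly-varying-distr} (ii)--(iii), this extra term contributes $V(\vartheta_n) + o(a_n)$ to $X_{t^*}$. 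Maximizing over admissible clusters and over $t^* \in nI_s$, and invoking Lemma \ref{lem:intersections-from-samorodnitsky-wang-2019AoP} together with the representation \eqref{eq:rsm-restricted-sup-derivative-eta}, then produces $\mathcal{M}_\beta(I_s)$ as the marginal limit; the joint limit across disjoint intervals follows because clusters sited on disjoint sub-intervals draw from disjoint portions of the underlying Poisson process of jumps.

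The main obstacle will be the precise bookkeeping between the pre-limit $(m+1)$-big-jump structure and the post-limit $m$-big-jump representation. Concretely, I must show that the $(m+1)$-th ``big-but-smaller'' jump contributes \emph{asymptotically deterministically} to $b_n$ at scale $V(\vartheta_n)$ while its random fluctuations remain $o_P(a_n)$: a delicate interplay between the capacity convergence of Proposition \ref{prop:convergence-of-range-capacities-subadditive-argument}, the Mittag-Leffler cardinality convergence of Proposition \ref{prop:convergence-to-MittagLeffler-process-J1-topology}, and the fine variation inequalities of Proposition \ref{prop:properties-exp-slowly-varying-distr}. A related technical burden is the uniform control of medium-rank jumps (those whose $\Gamma_j$ lies between large constants and values of order $\vartheta_n$): each individual size is $o(V(w_n))$, yet a priori many such jumps could pile up at a single time. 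The emptiness estimate of Proposition \ref{prop:more-intersection-properties} (ii), combined with a covering argument across dyadic scales of $\Gamma_j$, should rule out harmful configurations.
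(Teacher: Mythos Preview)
Your overall architecture is the paper's: reduce to \eqref{eq:weak-convergence-criterion-continuity-intervals}, pass to the series representation \eqref{eq:series-representation-abuse}, sandwich $\mathcal{M}_n(B)$ between an explicit lower bound (optimal $m$-cluster plus one extra jump at scale $V(\vartheta_n)$) and an upper bound, and close the gap to $o_P(a_n)$. Your dyadic covering of ``medium-rank'' jumps is exactly the paper's middle comparison (Proposition \ref{prop:middle-part-upper-bound}), and your handling of the extra $(m{+}1)$-th term via Theorem \ref{thm:key-preliminary-theorem-far-away-intersection} matches Proposition \ref{prop:lower-bound}(ii)--(iii). Two points, however, need repair. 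First, you are missing a third regime: the paper's bottom comparison (Propositions \ref{prop:bottom-part-do-not-exceed-upper-bound} and \ref{prop:a-fast-enough-tail-estimation}) treats $j_m(t)>2^{k_n}$, where the dyadic union bound blows up and one instead needs a concavity-based compound-Poisson tail estimate to push $n\cdot\PP\bigl(\sum_{j>n^r}V(w_n/\Gamma_j)\one(0\in I_{j,n})\ge\epsilon V(w_n)\bigr)\to 0$; bare subexponentiality is not sharp enough for a union bound over $n$ time points.

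Second, and more seriously, your joint-convergence argument is wrong as stated. You claim that ``clusters sited on disjoint sub-intervals draw from disjoint portions of the underlying Poisson process of jumps,'' but a single return-time set $I_{j,n}$ (and in the limit a single $R_j$) will generically meet several of the $I_r$'s simultaneously, so the optimal $m$-tuples $J^\star_n(B_r)$ across different $r$ share indices and are correlated. The paper obtains the joint limit by a different route: Lemma \ref{lem:intersections-from-samorodnitsky-wang-2019AoP}(i) gives joint weak convergence of the whole family $\bigl(n^{-1}\bigcap_{j\in J} I_{j,n}\bigr)_{J\subset\{1,\ldots,N\}}$ in the Fell topology, and Proposition \ref{prop:lower-bound}(i) then transports this, via a truncation-and-convergence-together argument, to joint convergence $(J^\star_n(B_1),\ldots,J^\star_n(B_s))\Rightarrow (L^\star(B_1),\ldots,L^\star(B_s))$. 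You will need this mechanism rather than any claimed independence.
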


\vspace{1em}

\begin{theorem}[Convergence of Extremal Processes]
    \label{main-theorem:extremal-processes-convergence}
    \phantom{blank}
    
    Let the setup be the same as Theorem \ref{main-theorem:random-sup-measure-convergence}. For each $n\in \bbn$, let $\mathcal{E}_n$ be the empirical extremal process
    $$
    \mathcal{E}_n (t) = \max\{ X_i: 0\leq i \leq \lfloor nt \rfloor\}
    $$
    and let $\mathcal{E}_\beta$ be the limit extremal process \eqref{eq:limit-extremal-process}. Then,  in the space $D(0,1)$ equipped with the $J_1$-topology, 
    \begin{equation}
    \label{eq:main-theorem-convergence-ep}
        \frac{\mathcal{E}_n(\cdot) - b_n}{a_n} \Rightarrow \mathcal{E}_\beta(\cdot).
    \end{equation}
\end{theorem}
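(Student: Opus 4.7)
The strategy is to deduce the convergence of extremal processes from the random sup‐measure convergence already provided by Theorem \ref{main-theorem:random-sup-measure-convergence}, then upgrade the mode of convergence to $J_1$ on $D(0,1)$ by exploiting monotonicity together with the structural information about how big values of $\{X_i\}$ generate jumps of $\mathcal{E}_n$.

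First, I would establish convergence of finite-dimensional distributions. Fix any $0<t_1<\cdots<t_k\le 1$ chosen so that each $(t_{j-1},t_j)$ (with $t_0=0$) belongs to the continuity interval family $\mathscr{I}(\mathcal{M}_\beta)$; such points form a dense set because $\mathcal{E}_\beta$ has at most countably many fixed discontinuity locations (by stationarity and self-affinity of $\mathcal{M}_\beta$ from Proposition \ref{proposition:limit-rsm-stationary-self-affine}). Writing
$$
\mathcal{E}_n(t_j) = \max_{1\le i \le j} \mathcal{M}_n\bigl((t_{i-1},t_i)\bigr) \vee \max_{1 \le i \le j} X_{\lfloor n t_i\rfloor},
$$
the second term is negligible after normalization by $a_n$ since the individual margin lives at scale $V(n) \ll V(w_n)$, and by Theorem \ref{main-theorem:random-sup-measure-convergence} together with \eqref{eq:weak-convergence-criterion-continuity-intervals} the joint law of $\{a_n^{-1}(\mathcal{M}_n((t_{i-1},t_i))-b_n)\}_{i\le k}$ converges to $\{\mathcal{M}_\beta((t_{i-1},t_i))\}_{i\le k}$. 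Taking successive maxima (a continuous map on $\overline{\mathbb{R}}^{k}$) yields convergence of $a_n^{-1}(\mathcal{E}_n(t_j)-b_n)_{j=1}^k$ to $(\mathcal{E}_\beta(t_j))_{j=1}^k$.

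Second, I would establish tightness in the $J_1$ topology. Both $\mathcal{E}_n$ and $\mathcal{E}_\beta$ are nondecreasing, so finite-dimensional convergence on a dense set automatically gives convergence in the $M_1$ topology. The real work is excluding the pathology that a single jump of $\mathcal{E}_\beta$ at a random time $T$ is approximated by a sum of several small increments of $\mathcal{E}_n$ spread over an interval around $T$. By the structure of the Poisson point representation \eqref{eq:rsm-restricted-sup-derivative-eta}, each atom of $\mathcal{E}_\beta$'s jump measure is associated with a single index $j$ with stable regenerative set $R_j$. I would show that the corresponding contribution in $\mathcal{E}_n$ is carried by the extremes produced at the points of the $m$ return-time intersections coming from a single surviving functional in the integral representation \eqref{eq:sid-sequence}; these generate one dominant $X_i$ (plus a lower-order tail thanks to the moderately heavy structure and Proposition \ref{prop:properties-exp-slowly-varying-distr} part (iii), which forces the $V_\#$-differences inside a single cluster to be of order $a_n$ rather than spread in time). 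Combined with the Billingsley-type modulus-of-continuity criterion applied to the nondecreasing processes, this rules out slipping into $M_1\setminus J_1$ and yields the required tightness.

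The main obstacle is this last $J_1$ refinement. In the present regime $\beta\in\bigl(\frac{m}{m+1},\frac{m+1}{m+2}\bigr)$ the anomalous ``$(m{+}1)$-big-jump principle'' means the pre-limit maxima on a cluster are produced by $m$ large contributions plus one smaller contribution, while the limit only remembers the $m$-big-jump structure. I must verify that the extra, smaller big jump does not create a second visible $J_1$-jump inside the cluster window at scale $a_n$; this reduces to quantitative bounds using $V(\vartheta_n)-V(w_n)=O(a_n)$ from \eqref{eq:extremal-limit-theorems-normalizing-constants} and \eqref{eq:pertubation-result;2.1}, so that the extra jump is absorbed into the centering $b_n$ and disappears in the limit. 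The intersection-time analysis of Section \ref{sec:DNRMC}, together with the cardinality and capacity estimates of Propositions \ref{prop:more-intersection-properties} and \ref{prop:convergence-of-range-capacities-subadditive-argument}, will be used to localize each such cluster to an $o(n)$-window so that its image in $[0,1]$ degenerates to a single point, thereby producing a genuine $J_1$ jump in the limit.
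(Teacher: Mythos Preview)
Your overall strategy---deduce finite-dimensional distributions from Theorem \ref{main-theorem:random-sup-measure-convergence}, get $M_1$ from monotonicity, then upgrade to $J_1$---is a legitimate alternative to the paper's route. The paper instead uses a truncation/decomposition scheme: it introduces approximants $\mathcal{E}_{n,k}$ and $\mathcal{E}_{\beta,k}$ depending on a finite index cutoff $k$, applies Billingsley's convergence-together lemma, and then further decomposes $\mathcal{E}_{n,k}$ as a maximum over the finitely many $m$-tuples $J=(j_1,\ldots,j_m)_<\subset\{1,\ldots,k\}$. Each piece $\mathcal{E}_{n,k,J}$ is a running maximum over the single intersection $\bigcap_{i=1}^m I_{j_i,n}$, so in the limit it has exactly one jump (from $-\infty$ to $\sum_{i=1}^m -\log\Gamma_{j_i}$ at the location $\inf\bigcap_{i=1}^m R_{j_i}$), and $J_1$-convergence of a one-jump process is immediate once jump time and height converge jointly. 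This sidesteps the $M_1\to J_1$ upgrade entirely.

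Your $J_1$ argument, however, contains two concrete errors that would have to be fixed. First, you write that ``each atom of $\mathcal{E}_\beta$'s jump measure is associated with a single index $j$ with stable regenerative set $R_j$''; this is the $\beta<1/2$ picture. In the present regime the sup-derivative \eqref{eq:rsm-restricted-sup-derivative-eta} requires $\ell_\beta=m\geq 2$ overlapping sets, so each jump of $\mathcal{E}_\beta$ is attached to an $m$-tuple of indices and the first point of $\bigcap_{i=1}^m R_{j_i}$, not a single $R_j$. Second, and more seriously, you propose to ``localize each such cluster to an $o(n)$-window so that its image in $[0,1]$ degenerates to a single point.'' This is false: the intersection $\bigcap_{i=1}^m I_{j_i,n}$ rescales to a shifted $\beta_\ast$-stable regenerative set (Lemma \ref{lem:intersections-from-samorodnitsky-wang-2019AoP}), which is spread over a macroscopic portion of $[0,1]$, not a single point. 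What actually makes $J_1$ work is not spatial localization but \emph{value} localization: before $\min\bigcap_{i=1}^m I_{j_i,n}$ the normalized process sits near $-\infty$, and after that time the upper/lower bound machinery (Propositions \ref{prop:lower-bound}, \ref{prop:top-part-upper-bound}, \ref{prop:middle-part-upper-bound}, \ref{prop:bottom-part-do-not-exceed-upper-bound}) pins the running maximum to within $o(1)$ of the constant $\sum_{i=1}^m -\log\Gamma_{j_i}$. The extra $(m{+}1)$-th jump and all later fluctuations are $O_P\!\big(h\circ V(\vartheta_n)\big)=o(a_n)$, so they do not produce a second visible $J_1$-jump.

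If you want to salvage your route, the $M_1\to J_1$ step should be argued along these lines: for each fixed $m$-tuple $J$ the pre-limit process, after normalization, is within $o_P(1)$ of a step function with a single step, uniformly on $[T_1,T_2]$, and a finite maximum of such near-step functions is $J_1$-close to its limit. This is essentially what the paper's decomposition does directly.
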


\vspace{1em}

\subsection{Series Representation}
\label{subsec:series-representation}

We shall replace the original sequence $(X_t:t\in \bbz)$ by an compound Poisson sequence without changing the extremal behavior. This is the idea of the  series representation.

\vspace{1em}

Let $\mathcal{N}^{(1)}$ and $\mathcal{N}^{(2)}$ be two independent infinitely divisible random measures such that 
 $\mathcal{N}^{(1)}$ has the constant local characteristic triple $(\sigma^2,   [\nu]_{(-\infty,1]}, b-\nu(1,\infty))$ and
    $\mathcal{N}^{(2)}$ has the constant local characteristic triple $(0,[\nu]_{(1,\infty)},\nu(1,\infty) )$.
Clearly, $\mathcal{N}\eid \mathcal{N}^{(1)} + \mathcal{N}^{(2)}$ and $\big( X_t : t\in \bbz \big) \eid   \big(X^{(1)}_t : t\in \bbz \big) +   \big( X^{(2)}_t : t\in \bbz \big)$ where
\begin{equation}
    X^{(i)}_t \triangleq \int_{\mathbb{Y}} \mathbf{1}_A \circ \tau ^t (y ) \, \mathcal{N}^{(i)} (dy), 
   \quad t \in \bbz, \quad i=1,2. \label{eq:light-tailed-heavy-tailed-component} 
\end{equation}

\vspace{1em}

\begin{proposition}[Super-Exponential Component]
\label{prop:light-components-negaligible}
\phantom{blank} 

Let $ \big(X^{(1)}_t : t\in \bbz \big) $ be in \eqref{eq:light-tailed-heavy-tailed-component}, then
\begin{equation}
    \label{eq:light-tailed-component-is-negligible}
     \frac{\max_{0\leq t \leq n } X^{(1)} _t }{a_n} \xlongrightarrow{P} 0 .
\end{equation}
\end{proposition}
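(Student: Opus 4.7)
The plan is to exploit the fact that truncating the Lévy measure at $1$ leaves a random variable with all exponential moments finite, then combine Markov/Chernoff with a union bound, and finally use the polylogarithmic growth of $a_n$.

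First, I would identify the marginal distribution of $X^{(1)}_t$. By the constant-local-characteristic assumption and $\mu(A)=\pi_0=1$, each $X^{(1)}_t$ is infinitely divisible with characteristic triple $(\sigma^2,\nu_1,b')$ where $\nu_1=[\nu]_{(-\infty,1]}$ is supported on $(-\infty,1]$. In the Lévy-Khintchine formula
\[
\psi(\lambda) \triangleq \log \EE e^{\lambda X^{(1)}_0} = \tfrac{\sigma^2\lambda^2}{2}+b'\lambda+\int_{\bbr}\bigl(e^{\lambda x}-1-\lambda x\one(|x|\le 1)\bigr)\nu_1(dx),
\]
the integrand over $x\in[-1,1]$ is $O(x^2)$ so integrable against $\nu_1$; over $x<-1$ it is bounded and $\nu_1(-\infty,-1)<\infty$; and the positive-jump range $(0,1]$ is harmless since $e^{\lambda x}$ is bounded on that interval. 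Hence $\psi(\lambda)<\infty$ for every $\lambda>0$.

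Next, for any $\epsilon>0$ and fixed $\lambda>0$, Chernoff's inequality yields $\PP(X^{(1)}_0>\epsilon a_n)\le e^{-\lambda\epsilon a_n+\psi(\lambda)}$, and a union bound over $t=0,1,\ldots,n$ (using stationarity) gives
\[
\PP\Bigl(\max_{0\le t\le n} X^{(1)}_t>\epsilon a_n\Bigr)\le (n+1)\,e^{-\lambda\epsilon a_n+\psi(\lambda)}.
\]
It therefore suffices to show that $a_n/\log n\to\infty$. By \eqref{eq:main-assumption-V-sharp-and-V} and $h\in\mathsf{RV}_1$ we have $a_n=h\circ V(w_n)\sim h\circ V_\#(w_n)$, and by Proposition~\ref{prop:properties-exp-slowly-varying-distr}(i), $h\circ V_\#(x)/(\log x)^c\to\infty$ for every $c>0$. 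Since $w_n\in\mathsf{RV}_{1-\beta}$ gives $\log w_n\sim(1-\beta)\log n$, we conclude $a_n/(\log n)^c\to\infty$ for every $c$, which is far more than enough to make the Chernoff-union bound tend to zero.

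I do not expect a genuine obstacle: all the analytic ingredients are already in place. The only point requiring mild care is the verification that the bounded-above Lévy measure really produces all positive exponential moments (since $\nu_1$ may still be infinite near zero), which is handled by the standard quadratic control of $e^{\lambda x}-1-\lambda x$ on $[-1,1]$. Everything else is a quantitative comparison between $\log n$ and $a_n$, settled by the super-polylogarithmic growth of $V_\#$ proved earlier.
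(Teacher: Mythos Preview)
Your proposal is correct and follows essentially the same route as the paper: union bound plus stationarity reduces to $n\,\PP(X^{(1)}_0>\epsilon a_n)\to 0$, the bounded-above L\'evy measure yields super-exponential right tails, and Proposition~\ref{prop:properties-exp-slowly-varying-distr}(i) gives $a_n\gtrsim(\log n)^2$. The only cosmetic difference is that the paper quotes Sato's Theorem~26.1 for the tail bound $\PP(X^{(1)}_0>x)=o(e^{-cx})$, whereas you verify $\EE e^{\lambda X^{(1)}_0}<\infty$ directly from L\'evy--Khintchine and apply Chernoff; these are equivalent.
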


\begin{proof}[Proof of Proposition \ref{prop:light-components-negaligible}]
    \phantom{blank}
    
    Due to the union bound, it suffices to show that, for any $\epsilon>0$, 
    $$
    n \PP\left(  X^{(1)}_0> \epsilon a_n \right)  \to 0.
    $$
    Because the L\'evy measure of $ X^{(1)}_0$ is bounded on the right, we have
    $\PP (  X^{(1)} (0)  > x ) = o(e^{-cr}) $ for any $c>0$; see Theorem 26.1 in \cite{sato:2013:book}. We also note that $a_n \geq (\log n)^2, n\gg 0$ due to Proposition \ref{prop:properties-exp-slowly-varying-distr}  $(\rom1)$. By taking $c=1$, we get 
    $$
     n\PP\left(   X^{(1)}  (0) > \epsilon a_n \right) \leq n \exp \left(  - (\log n)^2 \right) \to 0.
    $$
\end{proof}

\vspace{1em}

     Proposition \ref{prop:light-components-negaligible} implies that we should analyze the extremal behavior of  $ (X^{(2)}_t : t\in \bbz ) $. To prove Theorems \ref{main-theorem:random-sup-measure-convergence} and \ref{main-theorem:extremal-processes-convergence}, it suffices to replace $(X_t: t\in \bbz)$ by $(X_t^{(2)}: t\in \bbz)$.
     For each $n\in \bbn$, the random vector $\big( X^{(2)}_0,\ldots,  X^{(2)}_n \big)$ admits a \textit{series representation}. Namely, 
\begin{equation}
        \left( X^{(2)}_t \right)_{0 \leq t \leq n} \eid \left( \sum_{j=1}^\infty V \left( \frac{w_n}{\Gamma_j} \right) \mathbf{1} (t \in I_{j,n})  \right)_{0 \leq t \leq n}, 
    \label{eq:series-representation} 
\end{equation}
where the function $V$ is given in \eqref{eq:function-V},  the sequence $(\Gamma_j: j\in \bbn)$ consists  of arrival times of a unit rate Poisson process,  $(I_{j,n}: j\in \bbn)$ is defined in  \eqref{eq:return-times-random-zeros-Ikn}. In addition, families  $(\Gamma_j: j\in \bbn)$  and $(I_{j,n}: j\in \bbn)$ are independent. To prove the series representation, the key is show that the two sides of   \eqref{eq:series-representation} are both compound Poisson and share the same characteristic function, see Theorem 3.3.10 and Theorem 3.4.1 in \cite{samorodnitsky:2016:book}.
Note that all subsequent proofs will only involves partial sequence  $(X_t: 0 \leq t \leq n)$ instead of the whole. Thus, for our convenience, \textbf{we shall hereafter identify}
\begin{equation}
    \label{eq:series-representation-abuse}
    \big( X_t \big)_{0 \leq t \leq n} = \left( \sum_{j=1}^\infty V \left( \frac{w_n}{\Gamma_j} \right) \mathbf{1} (t \in I_{j,n})  \right)_{0 \leq t \leq n}.
\end{equation}

\vspace{1em}

 Under this convention, we enumerate the non-zero summands in $X_t$ by indices $j_1(t)< j_2(t)<\ldots$, i.e.,
\begin{equation}
    \label{eq:enumeration-non-vanishing-summands-in-Xt}
     X(t) = \sum_{i\geq 1}  V \left( \frac{w_n}{\Gamma_{j_i(t)} } \right)
     \quad 
     \text{and}
     \quad
     t\in \bigcap_{i\geq 1}I_{j_i(t),n}.
\end{equation}
In particular, we shall use the following equivalent formulation
\begin{equation}
    \label{eq:series-representation-X_t-equivalent-formulation}
    X_t \text{ contains }  V \left( \frac{w_n}{\Gamma_{j_i} } \right) \text{ for some } i=1,\ldots,k \quad 
\Longleftrightarrow \quad 
t \in \bigcap_{i=1}^k I_{j_i, n}.
\end{equation}
We shall also need to obvious results. For any $j \in \bbn$, we combine 
the $\Pi$-variation propery \eqref{eq:Pi-varying-definition} and the fact $h(x)=o(x)$ to get
\begin{equation}
    \label{eq:Pi-variation-and-weak-convergence}
    \frac{V \left(  w_n  / \Gamma_j  \right) - V(w_n) }{  h\circ V(w_n)  } 
    \Rightarrow - \log \Gamma_j \quad 
    \text{and} \quad 
     \frac{V \left(  w_n / \Gamma_j  \right)  }{  V(w_n)  } \Rightarrow 1.
\end{equation}

\subsection{Lower Bounds}
\label{subsec:lower-bound}

For each open interval $B \subset [0,1]$, we shall construct explicitly a lower bound $M_n^\mathsf{lower}(B) \leq \mathcal{M}_n(B)$, which consists of
 $m$ dominating summands of magnitude $O_P(V(w_n))$ and an extra term of magnitude  $O_P(V(\vartheta_n))$.

\vspace{1em}

Let us fix a  $\delta_0 \in (0,1-\beta - 1/(m+1))$ throughout in \S \ref{subsec:lower-bound}. We introduce
\begin{equation}
     \mathcal{J}_n( B) = \left\{
     (j_1,...,j_m)_<  \in \bbn^m:  j_m \leq \lfloor n^{\delta_0} \rfloor , nB \cap \bigcap_{i=1}^m I_{j_i, n}  \neq \emptyset
    \right\}
    \label{eq:m-intersected-sets-collection-prelimit}
\end{equation}
and
\begin{align}
     J^\star_n(B) = & (j_1^\star(B), \ldots, j_m^\star (B) )_<   \notag \\
     = & 
     \begin{cases}
         \argmax_{(j_i)_< \in  \mathcal{J}_n( B)} \sum_{i=1}^m
     V \left( \frac{w_n}{\Gamma_{j_i} } \right)  \quad & \mathcal{J}_n(B) \text{ is non-empty} \\
     (\infty,\ldots,\infty) & \text{otherwise}
     \end{cases}  .
    \label{eq:optimal-m-intersected-sets-prelimit} 
  \end{align}
With respect to $ J^\star_n(B)$, we further introduce 
\begin{equation}
    \label{eq:the-extra-term-index}
     j^{\doublestar}_{m+1}(B)= 
     \begin{cases}
         \inf \left\{  j > j^\star_m(B) :  I_{j,n} \cap nB \cap \bigcap_{i=1}^m I_{j_i^\star(B), n} \neq \emptyset   \right\}  \quad & j^\star_m(B) < \infty \\
     \infty  & j^\star_m(B) = \infty
     \end{cases}.
\end{equation}
We define
\begin{equation}
    \label{eq:the-lower-bound-rsm}
      M_n^\mathsf{lower}(B) =
      \sum_{i=1}^m V \left( \frac{w_n}{\Gamma_{j^\star_i (B) }} \right)       
      + V \left(  \frac{w_n}{ \Gamma_{ j^\doublestar_{m+1}(B) } } \right) 
\end{equation}
and it is obvious that 
\begin{equation}
     \label{eq:the-lower-bound-rsm-is-trivial}
     \lim_{n\to \infty}
     \PP \left(  \mathcal{M}_n (B) \geq M_n^\mathsf{lower}(B)  \right) = 1.
\end{equation}
because of the construction in \eqref{eq:optimal-m-intersected-sets-prelimit} and \eqref{eq:the-extra-term-index}.
\vspace{1em}

We recall the random closed sets 
$R_j, j\in \bbn$ in \eqref{eq:stationary-srs-unit-interval}. Based on the representation \eqref{eq:limit-rsm-restriction-unit-interval} of the random sup-measure $\mathcal{M}_\beta$, we define
\begin{align}
    & \mathcal{L}(B) = \left\{
      (\ell_1,...,\ell_m)_<  \in \bbn^m:  B \cap \bigcap_{i=1}^m R_{\ell_i}  \neq \emptyset
    \right\}, 
    \label{eq:m-intersected-sets-collection-prolimit}\\
     & L^\star(B) = (\ell_1^\star(B), \ldots, \ell_m ^\star (B) )_< =\argmax_{(\ell_i)_< \in  \mathcal{L}(B)} \sum_{i=1}^m
     -\log \Gamma_{\ell_i}.
    \label{eq:optimal-m-intersected-sets-prolimit}
   \end{align}
 The vector $L^\star(B)$ is a.s.\ $\bbn^m$-valued  because $\mathcal{L}(B)$ is a.s.\ non-empty. This follows from Lemma \ref{lem:zero-positive-dicotomy} and that  $-\log \Gamma_\ell \xlongrightarrow{a.s.} -\infty$ as $\ell \to \infty$.

\vspace{1em}

\vspace{1em}

\begin{proposition}
    \label{prop:lower-bound}
    \phantom{blank}

    \begin{itemize}
        \item[$(\rom1)$] 
      Let $s\in \bbn$  and  $B_1,\ldots, B_s \subseteq [0,1]$ be any disjoint open intervals, then  
    \begin{equation}
    \label{eq:key-preliminary-weak-convergence-optimal-to-optimal}
    \big(  J^{\star}_{n} ( B_1), \ldots, J^{\star}_{n} ( B_s)  \big)
    \Rightarrow
    \big(  L^{\star} ( B_1), \ldots, L^{\star} (B_s)  \big).
     \end{equation}
     Thus, $V (  w_n/ \Gamma_{j^\star_i (B)} )=O_P(w_n)$ for all $i=1,\ldots,m$.
    \item[$(\rom2)$] Let $(w_n:n\in \bbn)$ and $(\vartheta_n:n\in \bbn)$ be sequences \eqref{eq:definition-wandering-rate} and \eqref{eq:sequence-vartheta-n} respectively. Then, for any open interval $B$, 
    \begin{equation}
        \label{eq:weak-convergence-of-j-star-star-m-plus-one}
        j^\doublestar_{m+1}(B)  \frac{\vartheta_n}{ w_n} \Rightarrow U_B,
    \end{equation}
    where $U_B$ is some random variable distributed on $(0,\infty)$.
     \item[$(\rom3)$] Let $\mathcal{M}_\beta$ be the random sup-measure \eqref{eq:limit-rsm-restriction-unit-interval}. Then, for the $B_1,\ldots, B_s$ in part $(\rom1)$, 
      \begin{equation}
        \label{eq:weak-convergence-of-lower-bound}
         \left( \frac{M_n^\mathsf{lower} (B_r) - b_n}{a_n}
       \right)_{r=1,\ldots,s} \Rightarrow \big( \mathcal{M}_\beta(B_r) \big)_{r=1,\ldots,s}.
    \end{equation}
    \end{itemize}
\end{proposition}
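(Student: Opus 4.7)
The plan is to treat the three parts in order: (i) via tightness plus a continuous-mapping argument, (ii) via a geometric-gap argument combined with Theorem~\ref{thm:key-preliminary-theorem-far-away-intersection}, and (iii) by assembling parts (i)--(ii) through the $\Pi$-variation \eqref{eq:Pi-variation-and-weak-convergence}.

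For part (i), I first establish tightness of $j^\star_m(B_r)$: for each $\epsilon>0$ there is $N\in\bbn$ so that, by \eqref{eq:finite-intersection-properties-non-empty} in Proposition~\ref{prop:more-intersection-properties}, with probability at least $1-\epsilon$ uniformly in $n$, each $B_r$ admits some $m$-tuple $(j_1,\dots,j_m)_<$ with $j_m\le N$ realising $nB_r\cap \bigcap_{i=1}^m I_{j_i,n}\neq \emptyset$, whence $j^\star_m(B_r)\le N$. Restricted to $j\le N$, Lemma~\ref{lem:intersections-from-samorodnitsky-wang-2019AoP}~(i) gives the joint weak convergence $(n^{-1}\bigcap_{j\in J}I_{j,n})_{J\subset\{1,\dots,N\}}\Rightarrow (\bigcap_{j\in J}R_j)_{J\subset\{1,\dots,N\}}$; together with the independent $(\Gamma_j)_{j\le N}$, the argmax $J^\star_n(B_r)$ is a continuous functional of $(I_{j,n}/n,\Gamma_j)_{j\le N}$ on the a.s.\ event that the $m$-sums $\sum_i(-\log\Gamma_{j_i})$ have no ties and each $B_r\cap\bigcap_{j\in J}R_j$ is a continuity set of the limit. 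The continuous-mapping theorem then yields \eqref{eq:key-preliminary-weak-convergence-optimal-to-optimal} jointly in $r$; in particular $\Gamma_{j_i^\star(B_r)}=O_P(1)$ and $V(w_n/\Gamma_{j_i^\star(B_r)})=O_P(V(w_n))$.

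For part (ii), I combine two observations. First, by \eqref{eq:finite-intersection-properties-empty} applied with $\delta_0$, no $(m{+}1)$-intersection exists among indices $j\le\lfloor n^{\delta_0}\rfloor$ with probability $\to 1$; together with part (i) this forces $j^\doublestar_{m+1}(B)>\lfloor n^{\delta_0}\rfloor$ with high probability. Second, on this event, the trailing sets $(I_{j,n})_{j>\lfloor n^{\delta_0}\rfloor}$ are i.i.d.\ copies of $I_{1,n}$ and independent of both $(\Gamma_j)$ and of $\mathscr{G}_n\triangleq\sigma(J^\star_n(B),\, I_{j_i^\star(B),n}:i=1,\ldots,m)$. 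Consequently $j^\doublestar_{m+1}(B)-\lfloor n^{\delta_0}\rfloor$ is, conditionally on $\mathscr{G}_n$, geometric with success parameter
$$
\widetilde p_n(B)\triangleq\PP\Bigl(I\cap nB\cap\bigcap_{i=1}^mI_{j_i^\star(B),n}\neq\emptyset\,\Big|\,\mathscr{G}_n\Bigr),
$$
where $I$ is an independent sample from $\mu_n$. Exchangeability of $(I_{j,n})_j$ lets one replace $(I_{j_i^\star(B),n})_{i=1}^m$ by $(I_{1,n},\ldots,I_{m,n})$ without changing the law, so Theorem~\ref{thm:key-preliminary-theorem-far-away-intersection} applies to give jointly $w_n\widetilde p_n(B)/\vartheta_n \Rightarrow p_\mathsf{escape} d_B^\betast \mathscr{Z}_\betast(1-Q_\betast)$ and $\widetilde p_n(B)(j^\doublestar_{m+1}(B)-\lfloor n^{\delta_0}\rfloor)\Rightarrow \Gamma$, with $\Gamma$ a standard exponential independent of $\mathscr{Z}_\betast(1-Q_\betast)$. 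Since $\delta_0<1-\beta-1/(m+1)\le m-(m+1)\beta$ forces $\lfloor n^{\delta_0}\rfloor = o(w_n/\vartheta_n)$, the shift is asymptotically negligible and \eqref{eq:weak-convergence-of-j-star-star-m-plus-one} follows with $U_B=\Gamma/\bigl(p_\mathsf{escape} d_B^\betast \mathscr{Z}_\betast(1-Q_\betast)\bigr)$.

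For part (iii), I split
$$
\frac{M_n^\mathsf{lower}(B_r)-b_n}{a_n}=\sum_{i=1}^m\frac{V(w_n/\Gamma_{j_i^\star(B_r)})-V(w_n)}{h\circ V(w_n)}+\frac{V(w_n/\Gamma_{j^\doublestar_{m+1}(B_r)})-V(\vartheta_n)}{h\circ V(w_n)}.
$$
By part (i) and \eqref{eq:Pi-variation-and-weak-convergence}, the main sum converges jointly in $r$ to $\sum_{i=1}^m(-\log\Gamma_{\ell_i^\star(B_r)})=\mathcal{M}_\beta(B_r)$, using the representation \eqref{eq:rsm-restricted-sup-derivative-eta}--\eqref{eq:limit-rsm-restriction-unit-interval}. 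For the extra term, part (ii) together with $\Gamma_j/j\to 1$ a.s.\ gives $w_n/\Gamma_{j^\doublestar_{m+1}(B_r)}=(\vartheta_n/U_{B_r})(1+o_P(1))$, so the $\Pi$-variation \eqref{eq:Pi-varying-definition} of $V$ at $\vartheta_n$ yields a numerator of order $O_P(h\circ V(\vartheta_n))$. Proposition~\ref{prop:properties-exp-slowly-varying-distr}~(ii) with $\alpha_1=1-\beta>\betast=\alpha_2$, combined with $h\in\mathsf{RV}_1$, gives $h\circ V(\vartheta_n)/h\circ V(w_n)\asymp V(\vartheta_n)/V(w_n)\to 0$, so the extra term is $o_P(1)$ and \eqref{eq:weak-convergence-of-lower-bound} follows. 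The principal obstacle is the exchangeability argument in part (ii): one must rigorously identify the conditional law of the trailing sequence under $\mathscr{G}_n$ and argue that the selection of the (random) optimal $m$-tuple does not destroy the independence of $(I_{j,n})_{j > \lfloor n^{\delta_0}\rfloor}$ from the initial segment, which is handled precisely by isolating the high-probability event $\{j^\doublestar_{m+1}(B)>\lfloor n^{\delta_0}\rfloor\}$ and then invoking symmetry of the i.i.d.\ collection $(I_{j,n})_j$.
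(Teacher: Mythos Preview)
Your overall architecture matches the paper's: part~(i) via a truncation/tightness scheme combined with the joint convergence \eqref{eq:convergence-of-intersections-samorodnitsky&wang;1}, part~(ii) via a geometric gap plus Theorem~\ref{thm:key-preliminary-theorem-far-away-intersection}, and part~(iii) by $\Pi$-variation and the ratio $h\circ V(\vartheta_n)/h\circ V(w_n)\to 0$. Parts~(ii) and~(iii) are essentially the same as the paper's, and your use of the high-probability event $\{j^{\doublestar}_{m+1}(B)>\lfloor n^{\delta_0}\rfloor\}$ to make the conditioning in~(ii) clean is arguably tidier than the paper's direct appeal to $j^{\doublestar}_{m+1}(B)-j^\star_m(B)$.

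There is, however, a real gap in your tightness argument for part~(i). From Proposition~\ref{prop:more-intersection-properties}~(i) you only obtain that \emph{some} valid tuple lives in $\{1,\ldots,N\}^m$; the ``whence $j^\star_m(B_r)\le N$'' does not follow. The argmax could in principle be a tuple such as $(1,\ldots,m-1,K)$ with $K\gg N$, whose sum $\sum_{i<m}V(w_n/\Gamma_i)+V(w_n/\Gamma_K)$ may well exceed that of the valid tuple you found (for instance if the latter has all indices near $N$). The paper closes this gap by a two-sided comparison: on $E_{K,n}$ the optimal sum is at least $mV(w_n/\Gamma_K)$, while if $j^\star_m(B)\ge N$ the optimal sum is at most $(m-1)V(w_n/\Gamma_1)+V(w_n/\Gamma_N)$; dividing by $h\circ V(w_n)$ and using \eqref{eq:Pi-variation-and-weak-convergence}, the difference converges to $m\log\Gamma_K-(m-1)\log\Gamma_1-\log\Gamma_N$, which tends to $-\infty$ as $N\to\infty$ for fixed $K$, forcing a contradiction. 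You need this (or an equivalent) quantitative step; merely locating one valid tuple below $N$ is not enough to pin down the argmax.
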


\begin{proof}[Proof of Proposition \ref{prop:lower-bound}]
\phantom{blank}

\textbf{Part (i).}
The goal is to apply the ``Billingsley's convergence together lemma", see  Theorem 3.2 in \cite{billingsley:1999:book}, to prove \eqref{eq:key-preliminary-weak-convergence-optimal-to-optimal}. We define for each $k\in \bbn$
\begin{align*}
    & J^{\star}_{n} (k, B_r)  = \big( j^\star_{1}(B_r) \wedge k, \ldots,  j^\star_{m}(B_r) \wedge k \big),  \quad r=1,\ldots,s \\
   &  L^{\star}( k, B_r) =  \big( \ell ^\star_{1}(B_r) \wedge k, \ldots,  \ell ^\star_{m}(B_r) \wedge k \big), \quad r=1,\ldots,s.
\end{align*}
We apply \eqref{eq:Pi-variation-and-weak-convergence} and  \eqref{eq:convergence-of-intersections-samorodnitsky&wang;1}  to get 
\begin{align*}
    & \bigg(   \left(  \frac{V(w_n/\Gamma_j) - V(w_n)}{h\circ V(w_n)}   \right)_{j=1,\ldots,k},  \Big( \bigcap_{j\in \mathscr{J}} I_{j,n} \Big)_{\mathscr{J} \subset \{1,\ldots, k\} }  \bigg) \\
    \Rightarrow & \bigg(  \left( - \log \Gamma_j   \right)_{j=1,\ldots,k}, \Big( \bigcap_{j\in \mathscr{J}} R_j \Big)_{\mathscr{J} \subset \{1,\ldots, k\} }   \bigg).
\end{align*}
It thus follows that 
\begin{equation}
    \label{eq:lower-bound-billingsley-convergence-together-condition-1}
    \left(  J^{\star}_{n} (k, B_1), \ldots, J^{\star}_{n} ( k,B_s)  \right)
    \xLongrightarrow{n\to \infty}
    \left(  L^{\star} (k, B_1), \ldots, L^{\star} (k,B_s)  \right).
\end{equation}
Because each random vector $ L^{\star} (B_r)$ is a.s.\ $\bbn^m$-valued,  we immediately obtain 
\begin{equation}
    \label{eq:lower-bound-billingsley-convergence-together-condition-2}
      \left(  L^{\star} (k, B_1), \ldots, L^{\star} (k,B_s)  \right)
        \xlongrightarrow[a.s.]{k\to \infty}
         \left(  L^{\star} ( B_1), \ldots, L^{\star} (B_s)  \right).
\end{equation}
Besides \eqref{eq:lower-bound-billingsley-convergence-together-condition-1} and \eqref{eq:lower-bound-billingsley-convergence-together-condition-1}, it remains to show
\begin{equation}
 \label{eq:lower-billingsley-convergence-together-condition-3}
     \lim_{k \to \infty}
    \limsup_{n \to \infty}
    \PP
    \Big( 
     J^\star_n (k, B) = J^\star_n (B)  
    \Big) = 0.
\end{equation}
to trigger the ``convergence together lemma".

\vspace{1em}
Consider the event 
$$
     E_{K,n} = \left\{ \exists \,  1\leq j_1 < \cdots < j_m \leq K  \text{ such that }  nB \cap \bigcap_{i=1}^m I_{j_i,n}  \neq \emptyset \right \} .
$$
On the event $ E^{(1)}_{K,n}$, $\mathcal{J}_n(B)$ is clearly non-empty and hence, 
\begin{equation}
  \label{eq:tightness-of-random-sum;lower-bound;1}
   \sum_{i=1}^m V \left(  \frac{w_n}{\Gamma_{j^\star_i (B)}} \right) > m V\left(  \frac{w_n}{\Gamma_K} \right).
\end{equation}
From Proposition \ref{prop:more-intersection-properties}, we obtain that  
$$
\lim_{K\to \infty} \limsup_{n\to \infty} \PP\left(  E_{K,n}   \right)=1,
$$
therefore,
\begin{equation*}
    \lim_{K\to \infty} \limsup_{n\to \infty} 
\PP 
\left(
 \sum_{i=1}^m V \left(  \frac{w_n}{\Gamma_{j^\star_i (B)}} \right) > m V\left(  \frac{w_n}{\Gamma_K} \right)
\right) = 1.
\end{equation*}
It is direct to notice that 
$$
\sum_{i=1}^m V \left(  \frac{w_n}{\Gamma_{j^\star_i (B)}} \right) < 
 (m-1) V\left(  \frac{w_n}{\Gamma_1} \right) + V \left(  \frac{w_n}{\Gamma_{j^\star_m (B)}} \right).
$$
With respect to any fixed $K$, it follows that
\begin{align*}
   & \frac{ (m-1) V\left(  \frac{w_n}{\Gamma_1} \right) + V \left(  \frac{w_n}{\Gamma_N} \right) - m V\left(  \frac{w_n}{\Gamma_K} \right)  }{ h\circ V(w_n) } \\
& \xLongrightarrow{n\to\infty}  m \log \Gamma_K    - (m-1) \log \Gamma_1 - \log \Gamma_N   \xLongrightarrow{N\to\infty}  -\infty
\end{align*}
To avoid contradiction, we claim that
\begin{equation*}
     \lim_{N \to \infty}
    \limsup_{n \to \infty}
    \PP
    \Big( 
    j^\star_m(B) < N 
    \Big) = 1,
\end{equation*}
which immediately implies \eqref{eq:lower-billingsley-convergence-together-condition-3}.

   \vspace{1em}

   \textbf{Part (ii).} 
   We define a new random object $I_{0,n}$ such that $I_{j,n}, j=0,1,2,\ldots$ are i.i.d. We note that, conditioned on $J^\star_n(B)$ and $I_{j_i^\star (B), n}, i=1,\ldots,m$, 
 $j^\doublestar_{m+1} (B) - j^\star_m(B)$ is geometrically distributed with (random) probability $p^\star_n(B)$,
 \begin{align*}
     p^\star_n(B) & =  \PP \left( I_{0,n}  \cap nB \cap \bigcap_{i=1}^m I_{j_i^\star(B),n}  \neq \emptyset \, \bigg| \, J^\star_n(B), I_{j_i^\star (B), n}, i=1,\ldots,m \right) \\
     & \eid  p_n(B),
 \end{align*}
where $p_n(B)$ is the (random) probability \eqref{eq:random-intersection-probability-conditional-probability}. 
Since $j^\star_m(B)$ is tight by part $(\rom1)$, we thus see that 
$ j^\doublestar_{m+1} (B) p_n^\star(B) $ has the same distributional limit as the $\ell_n(B) p_n(B)$ in 
\eqref{eq:key-preliminary-joint-far-away-intersection}. We thus derive  \eqref{eq:weak-convergence-of-j-star-star-m-plus-one} from Theorem \ref{thm:key-preliminary-theorem-far-away-intersection}.

   \vspace{1em}

   \textbf{Part (iii).}  
  From parts $(\rom1)$ of this proposition, we note that  
   $$
   \Bigg(
   \frac{\sum_{i=1}^m V \left(  \frac{w_n}{\Gamma_{j^\star_i (B)}} \right) - b_n}{a_n}
   \Bigg)_{r=1,\ldots,s} \Rightarrow
   \big(\mathcal{M}_\beta(B_r) \big)_{r=1,\ldots,s}
   $$
   To prove\eqref{eq:weak-convergence-of-lower-bound}, it remains to show that, for any open interval $B \subset [0,1]$, 
   \begin{equation}
       \label{eq:lower-bound-drift-goes-to-zero}
         \frac{ 
  V \left(  \frac{w_n}{ \Gamma_{ j^\doublestar_{m+1}(B) } } \right) 
  - V(\vartheta_n)
 }
 {a_n} \Rightarrow 0.
   \end{equation}
 From the  Law of Large Numbers and part $(\rom2)$, we have
$$
 \frac{V \left(  \frac{w_n}{ \Gamma_{ j^\doublestar_{m+1}(B) } } \right) 
  - V(\vartheta_n)
 }
 {h\circ V(\vartheta_n)} \Rightarrow -\log U.
 $$
By Proposition \ref{prop:properties-exp-slowly-varying-distr} $(\rom2)$, we also have   
$$
\frac{h \circ V(\vartheta_n)}{a_n} = \frac{h \circ V(\vartheta_n)}{h \circ V(w_n)} \to 0,
$$
because  $\vartheta_n \in \mathsf{RV}_\betast$, $w_n \in \mathsf{RV}_{1-\beta}$ and $\betast < 1-\beta$.
Hence we get  \eqref{eq:lower-bound-drift-goes-to-zero}.

\end{proof}

\subsection{Upper Bounds}
\label{subsubsec:upper-bound-rsm-proof}

For any open interval $B\subset [0,1]$, we define another bound as
\begin{equation}
    \label{eq:the-upper-bound-rsm}
     M_n^\mathsf{upper}(B, c_1, c_2) = 
      \sum_{i=1}^m V \left(  \frac{w_n}{\Gamma_{j^\star_i (B)}} \right)       +  d_n,
\end{equation}
where, for convenience, 
\begin{equation}
    \label{eq:extra-drift-term-upper-bound}
    d_n = d_n (c_1,c_2) = V \left(   c_1 \vartheta_n \right) + c_2 h \circ V \left(   c_1 \vartheta_n \right), \quad c_1 \geq 0, \quad c_2 \geq 0.
\end{equation}
Our goal is to show that
\begin{equation}
    \label{eq:upper-bound-with-high-P-no-exceed}
    \lim_{c_1, c_2 \to \infty} 
    \limsup_{n \to \infty}
    \PP 
    \left(
    \max_{t\in nB} X_t > M_n^\mathsf{upper}(B, c_1, c_2)
    \right) = 0.
\end{equation}
 According to the index $j_m(t)$ in \eqref{eq:enumeration-non-vanishing-summands-in-Xt}, we shall divide $X_t$'s into three classes: top, middle and bottom. The strategy is to properly use union bound methods in each class.

\subsubsection{Top Comparison}
\label{subsubsec:top-comparison}

For each $k_0 \in \bbn$, define
\begin{equation}
       M_n^\mathsf{top}(B, k_0) = \max \left\{ X_t : t\in nB , j_{m,n}(t) \leq 2^{k_0}  \right  \}.
     \label{eq:max-top-part;1} 
\end{equation}
According to the largest $m$ summands in the series representation,  we would like to rewrite $M_n^\mathsf{top}(B, k_0)$ in a different form. Hence we define  
\begin{equation}
  \label{eq:m-intersected-sets-collection-before2k0-prelimit}
      \mathcal{J}_n^\mathsf{top}( k_0, B) = \left\{
     (j_1,...,j_m)_<  \in \bbn^m:  j_m \leq 2^{k_0} \text{ and } nB \cap \bigcap_{i=1}^m I_{j_i, n}  \neq \emptyset
    \right\}
\end{equation}
and rewrite 
\begin{equation}
    \label{eq:max-top-part;2}
     M_n^\mathsf{top}(B, k_0)  =  \max_{  J  \in \mathcal{J}_n( k_0, B)   } \;
    \max_{ t \in nB \cap \bigcap_{i=1}^m I_{j_i, n} }   X_t, 
\end{equation}
 where $J \triangleq (j_1,\ldots,j_m)_<$ for convenience.
Due to \eqref{eq:finite-intersection-properties-empty}, with probability approaching one, any $X_t$ contains at most $m$ summands $V(w_n/\Gamma_j)$ for index $j\in \{1,\ldots, \lfloor n^{\delta_0} \rfloor\}$. Therefore, we define 
\begin{align*}
   M_n^\mathsf{top}(B, k_0, J) = & \sum_{i=1}^m V \left(  \frac{w_n}{\Gamma_{j_i}} \right) \\
     & + 
     \max_{ t \in nB \cap \bigcap_{i=1}^m I_{j_i, n} } \sum_{j > n^{\delta_0} } V \left(  \frac{w_n}{\Gamma_j} \right)  \one (t\in I_{j,n}).  
\end{align*}
And it easily follows that
\begin{equation}
    \label{eq:max-top-part;4}
   \lim_{n\to \infty} 
   \PP
   \left(
   M_n^\mathsf{top}(B, k_0) =  \max_{  J \in  \mathcal{J}_n^\mathsf{top}( k_0, B)   } 
   M_n^\mathsf{top}(B, k_0, J)
    \right) = 1
\end{equation}

\begin{proposition} 
\label{prop:top-part-upper-bound}
\phantom{blank}

\begin{equation}
\label{eq:top-part-not-exceed-upper-bound}
        \lim_{k_0 \to \infty } \,
         \lim_{ c_1, c_2\to \infty } \,
    \limsup_{n\to \infty}
    \PP
    \Big(
    M_n^\mathsf{top}(B,k_0)
      > M_n^\mathsf{upper}(B, c_1, c_2)
    \Big) = 0.
\end{equation}

\end{proposition}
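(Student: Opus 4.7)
The plan is to compare $M_n^{\mathsf{top}}(B,k_0)$ and $M_n^{\mathsf{upper}}(B,c_1,c_2)$ term by term: the $m$ dominating summands of each $X_t$ in the top part get absorbed by the optimized sum appearing in $M_n^{\mathsf{upper}}$, while the remaining summands are controlled by the drift $d_n$. First, by \eqref{eq:max-top-part;4}, with probability tending to one, $M_n^{\mathsf{top}}(B,k_0)=\max_{J\in\mathcal{J}_n^{\mathsf{top}}(k_0,B)}M_n^{\mathsf{top}}(B,k_0,J)$. Since $2^{k_0}\le n^{\delta_0}$ for all sufficiently large $n$, we have $\mathcal{J}_n^{\mathsf{top}}(k_0,B)\subseteq\mathcal{J}_n(B)$, so the definition of $J_n^\star(B)$ yields $\sum_{i=1}^{m}V(w_n/\Gamma_{j_i})\le\sum_{i=1}^{m}V(w_n/\Gamma_{j_i^\star(B)})$ for every $J\in\mathcal{J}_n^{\mathsf{top}}(k_0,B)$. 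Writing $A_J=nB\cap\bigcap_{i=1}^m I_{j_i,n}$ and $R_J(t)=\sum_{j>n^{\delta_0}}V(w_n/\Gamma_j)\,\one(t\in I_{j,n})$, the claim \eqref{eq:top-part-not-exceed-upper-bound} reduces to
\begin{equation*}
\lim_{c_1,c_2\to\infty}\limsup_{n\to\infty}\PP\left(\max_{J\in\mathcal{J}_n^{\mathsf{top}}(k_0,B)}\max_{t\in A_J}R_J(t)>d_n\right)=0.
\end{equation*}

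Because $|\mathcal{J}_n^{\mathsf{top}}(k_0,B)|\le\binom{2^{k_0}}{m}$ is a fixed constant for given $k_0$, a union bound reduces the problem to a single fixed $J$. Conditional on $J$ and on the $I_{j_i,n}$, enumerate the ``extra'' indices $j>n^{\delta_0}$ with $I_{j,n}\cap A_J\neq\emptyset$ in increasing order as $j_1^\doublestar(J)<j_2^\doublestar(J)<\cdots$. Each such $j$ extends $J$ into an $(m+1)$-intersection hitting $A_J$, so the strong Markov / capacity analysis behind Theorem \ref{thm:key-preliminary-theorem-far-away-intersection} (applied successively to independent copies of $\boldsymbol{S}_{\beta,m+1}$) shows that the gaps $j^\doublestar_{k+1}(J)-j^\doublestar_k(J)$ are approximately geometric with parameter of order $\vartheta_n/w_n$. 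Consequently $\Gamma_{j_k^\doublestar(J)}\sim j_k^\doublestar(J)$ is of order $k\,w_n/\vartheta_n$ in probability, and hence $V(w_n/\Gamma_{j_k^\doublestar(J)})$ is comparable to $V(\vartheta_n/k)$. In particular the leading term $V(w_n/\Gamma_{j_1^\doublestar(J)})$ converges in distribution to $V(\vartheta_n/U)$ with $U>0$ a.s., which by the $\Pi$-variation \eqref{eq:Pi-varying-definition} of $V$ along the scale $\vartheta_n$ lies below $V(c_1\vartheta_n)$ with probability tending to one as $c_1\to\infty$.

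The main obstacle is the tail $\sum_{k\ge 2}V(w_n/\Gamma_{j_k^\doublestar(J)})\,\one(t\in I_{j_k^\doublestar(J),n})$. Because $V\in\mathsf{RV}_0$ is only slowly varying, each individual term $V(\vartheta_n/k)$ is still of the same order as $V(\vartheta_n)$, so a naive aggregation over $k$ would vastly overshoot $d_n$. The argument must therefore exploit that distinct extras $j_k^\doublestar(J)$ hit \emph{essentially disjoint} portions of $A_J$, so that $\max_{t\in A_J}R_J(t)$ is attained at a $t$ covered by only $O_P(1)$ extras and the $k\ge 2$ extras contribute only a $\Pi$-variation fluctuation. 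This sparsity is itself an escape-probability statement: simultaneous membership $t\in I_{j_k^\doublestar,n}\cap I_{j_{k'}^\doublestar,n}$ forces $J\cup\{j_k^\doublestar,j_{k'}^\doublestar\}$ to form an $(m+2)$-intersection, the frequency of which is bounded by the summability $\sum_n u_\beta(n)^{m+2}<\infty$ (valid under \eqref{eq:local-range-of-beta}) that underlies \eqref{eq:escape-probability-(m+1)-intersections}. Once the overlaps are shown negligible, the surplus beyond the $k=1$ contribution is of order $\log(\cdot)\cdot h\circ V(\vartheta_n)$, which sits inside $c_2\,h\circ V(c_1\vartheta_n)$ as soon as $c_2$ is large. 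Combining these ingredients delivers \eqref{eq:top-part-not-exceed-upper-bound}.
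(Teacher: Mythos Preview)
Your reduction up to the point where you union--bound over the at most $\binom{2^{k_0}}{m}$ tuples $J\in\mathcal{J}_n^{\mathsf{top}}(k_0,B)$ and compare the leading $m$ summands to $\sum_{i=1}^m V(w_n/\Gamma_{j_i^\star(B)})$ is correct and matches the paper. The divergence begins when you try to control the remainder $\max_{t\in A_J} R_J(t)$ by enumerating the ``extras'' $j_k^{\doublestar}(J)$ and arguing that overlaps are negligible.

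The gap is in the overlap claim. For a fixed $t\in A_J$, the number of indices $j>n^{\delta_0}$ with $t\in I_{j,n}$ and $V(w_n/\Gamma_j)>0$ is approximately Poisson with a \emph{positive constant} mean: there are $\asymp w_n$ candidate indices and each hits $t$ with probability $1/w_n$. Hence a fixed $t$ is covered by two or more extras with probability bounded away from zero, and since $|A_J|\asymp\vartheta_n\to\infty$, many $t$'s are covered by several extras. The $(m+2)$--intersection heuristic you invoke only says such intersections are terminal (finite) at the \emph{local} scale; it does not prevent them from occurring at order $\vartheta_n$ many points of $A_J$. Consequently, at the maximising $t^\ast$ the remainder can be of size $cV(\vartheta_n)$ with $c>1$, and because $h(x)=o(x)$ and $V\in\mathsf{RV}_0$ one has $d_n=V(c_1\vartheta_n)+c_2 h\circ V(c_1\vartheta_n)\sim V(\vartheta_n)$, so you cannot absorb $cV(\vartheta_n)$ into $d_n$ by enlarging $c_1,c_2$. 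Your assertion that ``the $k\ge2$ extras contribute only a $\Pi$--variation fluctuation'' would require each extra beyond the first to contribute $O(h\circ V(\vartheta_n))$, which is false: each contributes a full $V(\vartheta_n)$--sized term.

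The paper sidesteps this entirely. After the same union bound over $J$, it performs a second union bound over $t\in A_J$, using Proposition~\ref{prop:more-intersection-properties}(iv) to get $|A_J|\le c_1\vartheta_n$ with high probability, and then bounds $\PP(R_J(t)>d_n)\le\PP(X_0>d_n)\sim\overline\nu(d_n)$ via subexponentiality (Propositions~\ref{prop:pitman-subexponentiality-criterion}--\ref{prop:subexponential-from-Levy-measure}). The $\Gamma$--variation \eqref{eq:Gamma-varying-definition} gives $\overline\nu(d_n)\lesssim e^{-c_2}/(c_1\vartheta_n)$, so the product $c_1\vartheta_n\cdot\PP(X_0>d_n)\lesssim e^{-c_2}\to0$. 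The point is that subexponentiality of $X_0$ already accounts for arbitrarily many extras at a single $t$; no structural analysis of extras is needed.
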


\begin{proof}[Proof of Proposition \ref{prop:top-part-upper-bound}]
\phantom{blank}

Due to  \eqref{eq:max-top-part;4}, it suffices to show that 
\begin{equation}
\label{eq:top-part-not-exceed-upper-bound-equivalent}
        \lim_{k_0 \to \infty } \,
         \lim_{ c_1, c_2\to \infty } \,
    \limsup_{n\to \infty}
    \PP
    \Big(
    \max_{  J \in  \mathcal{J}_n^\mathsf{top}( k_0, B)   } 
   M_n^\mathsf{top}(B, k_0, J)
      > M_n^\mathsf{upper}(B, c_1, c_2)
    \Big) = 0.
\end{equation}
Fix any $k_0$. As long as $n^{\delta_0} > k_0$, one always have that 
$$
\max_{ J \in  \mathcal{J}_n^\mathsf{top}( k_0, B)  } \sum_{i=1}^m  V \left(  \frac{w_n}{\Gamma_{j_i}} \right) 
\leq  \sum_{i=1}^m  V \left(  \frac{w_n}{\Gamma_{j_i^\star(B)}} \right) 
$$
due to the construction of $J^\star_n(B)$.
Hence,  instead of  \eqref{eq:top-part-not-exceed-upper-bound-equivalent}, it suffices to show 
\begin{align}
        & \lim_{k_0 \to \infty } \,
         \lim_{ c_1, c_2\to \infty } \,
    \limsup_{n\to \infty}  \notag \\
    & \PP
    \bigg(  
    \max_{  J \in  \mathcal{J}_n^\mathsf{top}( k_0, B)   } \;
    \max_{ t \in nB \cap \bigcap_{i=1}^m I_{j_i, n} } 
    \sum_{j > n^{\delta_0} } V \left(  \frac{w_n}{\Gamma_j} \right)  \one (t\in I_{j,n}) 
   >  d_n
    \bigg) = 0.
     \label{eq:top-part-remainder-not-exceed-upper-bound-remainder}
\end{align}

\vspace{1em}

To this end, note first that $| \mathcal{J}_n( k_0, B) | \leq 2^{k_0m}$, hence the union bound method implies
\begin{align*}
      & \PP
    \Bigg(  
    \max_{  J \in  \mathcal{J}_n^\mathsf{top}( k_0, B)   } \;
    \max_{ t \in nB \cap \bigcap_{i=1}^m I_{j_i, n} } 
    \sum_{j > n^{\delta_0} } V \left(  \frac{w_n}{\Gamma_j} \right)  \one (t\in I_{j,n}) 
  > d_n 
    \Bigg) \\
    \leq & 2^{k_0m} 
    \cdot \PP
    \Bigg(  
    \max_{ t \in nB \bigcap_{i=1}^m I_{i, n} } 
    \sum_{j > n^{\delta_0} } V \left(  \frac{w_n}{\Gamma_j} \right)  \one (t\in I_{j,n}) 
  > d_n
  \, \bigg| \, nB \cap \bigcap_{i=1}^m I_{i, n} \neq \emptyset
    \Bigg) \\
      \leq & 2^{k_0m} 
    \cdot \PP
    \Bigg(  
    \max_{ t \in \bigcap_{i=1}^m I_{i, n} } 
    \sum_{j > n^{\delta_0} } V \left(  \frac{w_n}{\Gamma_j} \right)  \one (t\in I_{j,n}) 
  > d_n
  \, \bigg| \,  \bigcap_{i=1}^m I_{i, n} \neq \emptyset
    \Bigg) \\
    \leq &  2^{k_0m} 
    \cdot \left[  \PP
    \left(  
  \left| \bigcap_{i=1}^m I_{i, n} \right| > c_1 \vartheta_n
  \, \bigg| \,  \bigcap_{i=1}^m I_{i, n} \neq \emptyset
    \right) 
    +  c_1 \vartheta_n   
    \cdot \PP
    \left(  
    X_0 > d_n
    \right) \right].
\end{align*}
From  \eqref{eq;m-intersections-cardinality-weak-convergence}, we note that $\EE \left| \bigcap_{i=1}^m I_{i, n} \right| \lesssim \vartheta_n$. Therefore, the Markov inequality guarantees that 
\begin{equation}
    \label{eq:upper-bound-top-part-final-1}
   \lim_{c_1 \to \infty} \limsup_{n\to \infty} 2^{k_0m} \cdot \PP
    \left(  
  \left| \bigcap_{i=1}^m I_{i, n} \right| > c_1 \vartheta_n
  \, \bigg| \,  \bigcap_{i=1}^m I_{i, n} \neq \emptyset
    \right) =0.
\end{equation}
Next, due to $\PP(X_0 > d_n) \asymp \overline{nu}(d_n)$, it follows that as $n\to \infty$, 
\begin{align*}
    c_1 \vartheta_n   
    \cdot \PP
    \left(  
    X_0 > d_n
    \right)  \lesssim & \frac{ \overline{\nu} \big( V(c_1 \vartheta_n) + c_2 h \circ V(c_1 \vartheta_n)   \big)  }{  \overline{\nu} \big( V(c_1 \vartheta_n) \big)   } \lesssim e^{-c_2}.
\end{align*}
The last inequality is due to Assumption \ref{assumption:main-assumptions-for-the-main-theorem} and  \eqref{eq:Gamma-varying-definition}. Thus,
\begin{equation}
    \label{eq:upper-bound-top-part-final-2}
    \lim_{c_2 \to \infty} \limsup_{n\to \infty} 
    2^{k_0m} \cdot c_1 \vartheta_n   
    \cdot \PP
    \left(  
    X_0 > d_n
    \right)  =0
\end{equation}
Combining  \eqref{eq:upper-bound-top-part-final-1} and  \eqref{eq:upper-bound-top-part-final-2}  together gives
  \eqref{eq:top-part-remainder-not-exceed-upper-bound-remainder}.
\end{proof}

\subsubsection{Middle  Comparison}
\label{subsubsec:middle-comparison}

Let $\rho>0$  and set 
\begin{equation}
    \label{eq:sequence-k-n}
    k_n =  \left \lfloor \frac{\rho \log n}{\zeta(\log n)} \right \rfloor 
\end{equation}
Regardless of the open interval $B\subset [0,1]$, we define 
\begin{equation}
    \label{eq:upper-bound-mid-part-extreme}
    M_n^\mathsf{mid}=  M_n^\mathsf{mid}(k_0) =  \max \left\{ X_t : 0 \leq t  \leq n,  2^{k_0} < j_m(t) \leq  2^{k_n} \right\}.
\end{equation}
We shall again adopt the simplification $J= (j_1,...,j_m)_<$ and the $d_n$ in \eqref{eq:extra-drift-term-upper-bound}. Additionally, we shall set 
$$
\eta_n=\eta_n(c_3) \triangleq V(c_3 \vartheta_n \log n), \quad c_3 \geq 0.
$$
for further convenience. In the sequel, we shall replace the sum $\sum_{i=1}^m V \left( w_n / \Gamma_{j_i^\star(B)} \right) $ by a deterministic number. Namely,   from the proof of  \eqref{eq:tightness-of-random-sum;lower-bound;1} and the Law of Large Numbers, we obtain 
\begin{equation}
     \label{eq:a-deterministic-bound-m-largest-jumps}
      \lim_{K\to \infty} \lim_{n\to\infty} \PP \left(
    \sum_{i=1}^m V \left( \frac{w_n}{ \Gamma_{j_i^\star(B)} }\right)  
    > m V \left( \frac{w_n}{K} \right)
    \right) = 1.
\end{equation}

\vspace{1em}

\begin{proposition}
\label{prop:middle-part-upper-bound}

\begin{equation}
    \label{eq:middle-part-do-not-exceed-upper-bound}
\lim_{ \substack{\rho\to 0 \\  k_0 \to \infty } } 
    \limsup_{n \to \infty} 
     \PP 
    \bigg(  
   M_n^\mathsf{mid}
     > M^\mathsf{upper}_n (B,c_1=0,c_2=0)
    \bigg) = 0.
\end{equation}

\end{proposition}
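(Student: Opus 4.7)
The strategy pairs a dyadic decomposition of $(2^{k_0},2^{k_n}]$ with the $(m+1)$-big-jump rarity of Proposition~\ref{prop:more-intersection-properties}~$(\rom2)$: for a middle-range tuple to push $X_t$ above the top-$m$ target $m V(w_n/K)$, it would have to recruit an extra summand, forcing an $(m+1)$-fold intersection of the return-time sets at $t$, which is rare. By \eqref{eq:a-deterministic-bound-m-largest-jumps} quoted before the proposition, it suffices to prove, for each fixed $K$,
\[
\lim_{k_0\to\infty,\,\rho\to 0}\limsup_{n\to\infty}\PP\!\left(M_n^{\mathsf{mid}}(k_0)>m V(w_n/K)\right)=0.
\]
Partition $(2^{k_0},2^{k_n}]=\bigsqcup_{k=k_0}^{k_n-1}(2^k,2^{k+1}]$, and inside each block union-bound over the at-most-$2^{(k+1)m}$ ordered tuples $(j_1,\ldots,j_m)_<$ with $j_m\in(2^k,2^{k+1}]$. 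For each such tuple and $t\in\bigcap_{i=1}^m I_{j_i,n}$, decompose $X_t=\sum_{i=1}^m V(w_n/\Gamma_{j_i})+R_t$ with tail $R_t=\sum_{j>j_m}V(w_n/\Gamma_j)\,\one(t\in I_{j,n})$.

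The choice of $k_n$ keeps $\log\Gamma_{j_m}\lesssim k_n\log 2\lesssim\log n/\zeta(\log n)$ inside the perturbation regime of Lemma~\ref{lemma:properties-exp-slowly-varying-distr}~$(\rom2)$, so the $\Pi$-variation \eqref{eq:Pi-varying-definition} combined with the law of large numbers $\Gamma_{j_m}\gtrsim j_m>2^k$ gives uniformly over middle tuples
\[
\sum_{i=1}^m V(w_n/\Gamma_{j_i})\leq m V(w_n)-(k\log 2)\,h\!\circ\! V(w_n)\bigl(1+o_P(1)\bigr),
\]
while $m V(w_n/K)=m V(w_n)-m(\log K)\,h\!\circ\! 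V(w_n)(1+o(1))$. Once $k_0>m\log K/\log 2$, the event $\{X_t>m V(w_n/K)\}$ forces $R_t>0$, i.e.\ some $j>j_m$ satisfies $t\in I_{j,n}$, producing an $(m+1)$-fold intersection of the return-time sets at $t$. Dichotomize this extra index $j$ into small ($j\leq n^\delta$, $\delta\in(0,1-\beta-(m+1)^{-1})$) and large ($j>n^\delta$): in the small case, Proposition~\ref{prop:more-intersection-properties}~$(\rom2)$ bounds the total probability across all tuples by $o(1)$; in the large case, $V(w_n/\Gamma_j)\leq V(w_n/n^\delta)\ll V(w_n)$ by Proposition~\ref{prop:properties-exp-slowly-varying-distr}~$(\rom2)$, and a direct tail estimate on the compound-Poisson remainder (using $\PP(t\in I_{j,n})=w_n^{-1}$ together with the cardinality control \eqref{eq;m-intersections-cardinality-control;log-power-law-tail}) shows the contribution cannot fill the gap $(k_0\log 2-m\log K)\,h\!\circ\! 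V(w_n)$ once $k_0$ is large. Summing over blocks, the geometric blow-up $2^{km}$ of the tuple count is dominated by the geometric decay from the $\Gamma$-variation \eqref{eq:Gamma-varying-definition} in the form $\PP(X_0>V(w_n)+L\,h\!\circ\! V(w_n))\asymp e^{-L}/w_n$, so the double sum vanishes as $k_0\to\infty$; the role of $\rho\to 0$ is only to keep $k_n$ inside the perturbation regime of Lemma~\ref{lemma:properties-exp-slowly-varying-distr}~$(\rom2)$.

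The main obstacle is the large-$j$ case. When $\beta$ is close to $m/(m+1)$, the admissible $\delta$ can be arbitrarily small, so $V(w_n/n^\delta)/V(w_n)$ is only a super-slowly-varying decay yet must still be shown to sit below $k_0\,h\!\circ\! V(w_n)$. This is exactly where the moderately heavy hypothesis \eqref{eq:growth-condition-of-zeta(u)} is decisive: the quantitative separation $V(x^{\alpha_1})/V(x^{\alpha_2})>\exp\bigl(c(\log\log x)^\delta\bigr)$ from Proposition~\ref{prop:properties-exp-slowly-varying-distr}~$(\rom2)$, together with the matching asymptotic $V(x^{\alpha_1})/V(x^{\alpha_2})\asymp h\!\circ\! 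V(x^{\alpha_1})/h\!\circ\! V(x^{\alpha_2})$ from the same proposition, supplies precisely the budget needed for $k_0\,h\!\circ\! V(w_n)$ to absorb $V(w_n/n^\delta)$ and finish the double limit.
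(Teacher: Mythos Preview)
Your overall architecture matches the paper: reduce to $mV(w_n/K)$ via \eqref{eq:a-deterministic-bound-m-largest-jumps}, partition dyadically over $j_m\in(2^k,2^{k+1}]$, union-bound over the at most $2^{(k+1)m}$ tuples in each block, and control the tail via the cardinality bound plus $\Gamma$-variation. One cosmetic slip: the displayed inequality $\sum_{i=1}^m V(w_n/\Gamma_{j_i})\le mV(w_n)-(k\log 2)\,h\circ V(w_n)(1+o_P(1))$ is false, because the first $m-1$ summands can each be as large as $V(w_n/\Gamma_1)=V(w_n)+O_P(h\circ V(w_n))$; the paper bounds by $(m-1)V(w_n/\Gamma_1)+V(w_n/\Gamma_{2^k})$ and handles $\Gamma_1$ by tightness.

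The substantive gap is in your ``main obstacle'' paragraph. You claim Proposition~\ref{prop:properties-exp-slowly-varying-distr}~$(\rom2)$ lets $k_0\,h\circ V(w_n)$ absorb $V(w_n/n^\delta)$; it does not. Writing $\delta_\#$ for the growth exponent in \eqref{eq:growth-condition-of-zeta(u)}, that proposition only gives $V(w_n)/V(w_n/n^\delta)>\exp\bigl(c(\log\log n)^{\delta_\#}\bigr)$, whereas Lemma~\ref{lemma:properties-exp-slowly-varying-distr}~$(\rom1)$ gives $V(w_n)/h\circ V(w_n)\asymp\log w_n/\zeta(\log w_n)$. In the simplest instance of Example~\ref{example:super-log-normal-tail} with $\gamma\in(1/2,1)$ one has $\zeta(u)\asymp(\log u)^{1/\gamma-1}$ with exponent in $(0,1)$, so every admissible $\delta_\#<1/\gamma-1<1$ makes $\exp(c(\log\log n)^{\delta_\#})=o(\log n)$ while $\log w_n/\zeta(\log w_n)\asymp\log n/(\log\log n)^{1/\gamma-1}$; a direct computation then gives $V(w_n/n^\delta)/h\circ V(w_n)\to\infty$, so no finite $k_0$ can do the absorption. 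Your geometric-decay display $\PP(X_0>V(w_n)+L\,h\circ V(w_n))\asymp e^{-L}/w_n$ is likewise placed at the wrong scale: the tail from indices $j>n^{\delta_0}$ lives near $V(\vartheta_n)$, not $V(w_n)$. The paper handles this differently, fixing a block-dependent threshold $\eta_n+c_4kh(\eta_n)$ with base $\eta_n=V(c_3\vartheta_n\log n)$ so that $\overline{\nu}(\eta_n)$ exactly cancels the cardinality bound $c_3\vartheta_n\log n$ from \eqref{eq;m-intersections-cardinality-control;log-power-law-tail}; the $\Gamma$-variation \eqref{eq:Gamma-varying-definition} then acts at scale $\eta_n$ and yields $\overline{\nu}(\eta_n+c_4kh(\eta_n))/\overline{\nu}(\eta_n)\le e^{-c_4k/2}$ uniformly over $k\le k_n$ once $\rho$ is small, giving the geometric decay that beats $2^{mk}$.
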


\begin{proof}[Proof of Proposition \ref{prop:middle-part-upper-bound}]
    \phantom{blank}

Due to \eqref{eq:a-deterministic-bound-m-largest-jumps}, it suffices to show that, for any $K >0$, 
\begin{equation}
    \label{eq:middle-part-do-not-exceed-upper-bound;equivalent-1}
\lim_{ \substack{\rho\to 0 \\ k_0 \to \infty } } 
    \limsup_{n \to \infty} 
     \PP 
    \bigg(  
   M_n^\mathsf{mid}
     > m V\left( \frac{w_n}{K} \right) 
    \bigg) = 0.
\end{equation}
Let us decompose $M_n^\mathsf{mid} = \bigvee_{k_0 \leq k \leq k_n - 1} M_n^\mathsf{mid}(k)$, where
\begin{equation}
    M_n^\mathsf{mid}(k) \triangleq  \max \left\{ X_t : 0 \leq t  \leq n,  2^{k} < j_m(t) \leq  2^{k+1} \right\}.
     \notag 
\end{equation}
Thus, \eqref{eq:middle-part-do-not-exceed-upper-bound;equivalent-1} is equivalent to 
\begin{equation}
\lim_{ \substack{\rho\to 0 \\  k_0 \to \infty } } 
    \limsup_{n \to \infty} 
 \PP 
    \left(  \forall \, k=k_0,\ldots, k_n, \; 
   M_n^\mathsf{mid}(k)
     > m V\left( \frac{w_n}{K} \right) 
    \right) = 0.
     \label{eq:middle-part-do-not-exceed-upper-bound;equivalent-2}
\end{equation}

\vspace{1em}

Let us show \eqref{eq:middle-part-do-not-exceed-upper-bound;equivalent-2}. We define for each $k=k_0,\ldots,k_n-1$,
\begin{align}
    &  \mathcal{J}_n^\mathsf{mid}( k) = \left\{
     (j_1,...,j_m)_<  \in \bbn^m:  2^{k}+1 \leq j_m \leq 2^{k+1} \text{ and } \bigcap_{i=1}^m I_{j_i, n}  \neq \emptyset
    \right\}, \notag \\ 
    &   T_n(k) = \max_{ J \in \mathcal{J}_n^\mathsf{mid}( k) } \, \max_{t\in \bigcap_{i=1}^m I_{j_i,n}  }
    \sum_{j > n^{\delta_0} } V \left( \frac{w_n}{\Gamma_j}  \right) \one(t \in I_{j,n}). \notag
\end{align}
  By \eqref{eq:finite-intersection-properties-empty}, we note that, as $n\to \infty$, 
 \begin{align*}
    \PP 
     \bigg(& \forall \, k= k_0, \ldots, k_n-1, \;
     \\
    & M_n^\mathsf{mid}(k) <  (m-1)  V\left( \frac{w_n}{\Gamma_1} \right) +  V\left( \frac{w_n}{\Gamma_{2^k}} \right) 
     + T_n(k)
     \bigg) \to 1.
 \end{align*}
At the moment, we claim that, for all sufficiently small $\rho>0$, and suitably large $c_3>0$ and $c_4>0$, 
  \begin{equation}
     \label{eq:upper-bound-of-M-mid-nk;3}
   \lim_{k_0\to\infty} \limsup_{n\to\infty}  \sum_{k=k_0}^{k_n-1 }   \PP \big(  T_n(k) > \eta_n + c_4 k h (\eta_n)
     \big) = 0.
 \end{equation}
From \eqref{eq:upper-bound-of-M-mid-nk;3},  we will obtain \eqref{eq:middle-part-do-not-exceed-upper-bound;equivalent-2} as follows. 

\vspace{1em}

Apply  the Chernoff bound to get that 
\begin{align*}
   &\lim_{k_0 \to \infty} \limsup_{n\to \infty} \PP 
     \left( \forall \,  k_0 \leq k \leq k_n-1,
      V\left( \frac{w_n}{\Gamma_{2^k}} \right) 
     < V\left( \frac{w_n}{2^{k-1}} \right) 
     \right) \\
     = & \lim_{k_0\to \infty} \sum_{k=k_0}^{\infty} \PP 
     \left( \Gamma_{2^k} 
     < {2^{k-1}} 
     \right) =0.
\end{align*}
And we note that  
\begin{equation*}
 \lim_{c_0 \to 0^+}     \PP 
     \left( 
      V\left( \frac{w_n}{\Gamma_1} \right) > V\left( \frac{w_n}{c_0} \right)
    \right) = \lim_{c_0 \to 0^+} \PP(\Gamma_1 < c_0) = 0.
\end{equation*}
Therefore, it suffices to show that, as $n \to 0$, 
\begin{equation}
    \label{eq:the-core-comparison}
      m V\left( \frac{w_n}{K} \right) - 
    \left[ (m-1)  V\left( \frac{w_n}{c_0} \right) +  V\left( \frac{w_n}{2^{k-1}} \right) + \eta_n + c_4k h(\eta_n)  \right]
    >0
\end{equation}
for all choices of  $c_0, c_3, c_4$ and $k=k_0,\ldots,k_n-1$. This will be done in two steps. First, we note that $2^{k_n}$ fits into  \eqref{eq:pertubation-growth-condition},
we thus apply \eqref{eq:pertubation-result;2.1} to deduce that, for all  $k_0 \leq k \leq k_n-1$,
\begin{equation}
    \label{eq:first-middle-difference-comparison}
    m V\left( \frac{w_n}{K} \right) - (m-1)  V\left( \frac{w_n}{c_0} \right) - V\left( \frac{w_n}{2^{k-1}} \right)  
    \geq  c_5 k h \circ  V(w_n)
\end{equation}
for some $c_5> 0$. Apply again \eqref{eq:pertubation-result;2.1} to derive that, for all  $k_0 \leq k \leq k_n-1$,
\begin{equation}
  \eta_n + c_4k h(\eta_n)      
 \geq  c_6 k  h \circ (\vartheta_n \log n)
 \label{eq:second-middle-difference-comparison}
\end{equation}
for some $c_6>0$. Note that $w_n\in \mathsf{RV}_{1-\beta}$,  $\vartheta_n\in \mathsf{RV}_{\betast}$ and $1-\beta > \betast$. Hence, we apply   \eqref{eq:quantile-ratio-lower-bound} to note that 
$ h \circ  V(w_n)  / h \circ (\vartheta_n \log n) \to \infty.$
Combining \eqref{eq:first-middle-difference-comparison} and 
 \eqref{eq:second-middle-difference-comparison} gives  \eqref{eq:the-core-comparison}, hence implies \eqref{eq:middle-part-do-not-exceed-upper-bound;equivalent-2}.

 \vspace{1em}

 Finally, let us prove  \eqref{eq:upper-bound-of-M-mid-nk;3} to finish the whole proof. Define the event 
 \begin{equation}
     \label{eq:mid-part-m-intersections-cardinality-control}
     E_{n,k} = \left \{  \exists \, J \in \mathcal{J}_n^\mathsf{mid}( k)    
     \text{ such that } \left| \bigcap_{i=1}^m I_{j_i,n}   \right| > c_3 \vartheta_n  \log n
    \right \}
 \end{equation}
 Each $\mathcal{J}_n^\mathsf{mid}( k) $ contains at most $2^{mk}$ many distinct $J$'s. By  Proposition \ref{prop:more-intersection-properties} $(\rom3)$, as long as $c_3>0$ is large enough, there exists $C_3 > m$ and such that
 \begin{align*}
     \sum_{k=k_0}^{k_n-1} \PP \left(
      E_{n,k} 
    \right ) \leq  n^{-C_3 } \sum_{k=k_0}^{k_n-1} 2^{mk} \leq
     2^{m(k_n+1)}  n^{-C_3 }  \lesssim n^{m-C_3} \xlongrightarrow{n\to \infty} 0.
 \end{align*}
As such, we reduce  \eqref{eq:upper-bound-of-M-mid-nk;3} into
\begin{align*}
      & \sum_{k=k_0}^{k_n-1}\PP \left(   T_n(k) >\eta_n + c_4 k h (\eta_n) ,  E_{n,k} ^c
     \right) \\
     \leq &  \sum_{k=k_0}^{k_n-1} 2^{mk} \cdot c_3 \vartheta_n \log n \cdot \PP \left(  \sum_{j > n^{\delta_0} } V \left( \frac{w_n}{\Gamma_j}  \right) \one(t \in I_{j,n}) >   \eta_n + c_4 k h (\eta_n)  \right) \\
     \leq & \sum_{k=k_0}^{k_n-1} 2^{mk} \cdot c_3 \vartheta_n \log n \cdot \PP \left(  X_0 >   \eta_n + c_4 k h (\eta_n)  \right) 
     \lesssim  \sum_{k=k_0}^{k_n-1} 2^{mk} \cdot \frac{\overline{\nu} \left(  \eta_n + c_4 k h (\eta_n)  \right)  }{ \overline{\nu} (\eta_n)}.
\end{align*}
From the Assumption \ref{assumption:main-assumptions-for-the-main-theorem} $(\rom2)$ that $\overline{\nu} = c_\# \overline{H_\#}$, hence for all $n \gg 0$,
$$
\frac{\overline{\nu} \left(  \eta_n + c_4 k h (\eta_n)  \right)  }{ \overline{\nu} (\eta_n)} =  
\exp \left(  - \int_{0 }^{  c_4 k   } \frac{h (\eta_n) du}{h(\eta_n + u  h (\eta_n) )}     \right).
$$
 Next, due to \eqref{eq:main-assumption-V-sharp-and-V}, it follows that 
\begin{align*}
    \frac{k_n h(\eta_n)}{\eta_n} \sim &  \frac{k_n h \circ V_\#(c_3 \vartheta_n \log n)}{V_\#(c_3 \vartheta_n \log n)} \\
    = & \frac{k_n  \cdot c_3 \vartheta_n \log n \cdot V^\prime_\#( c_3 \vartheta_n \log n )}{V_\#(c_3 \vartheta_n \log n)} 
    \asymp  \frac{k_n \zeta\big( \log(c_3 \vartheta_n \log n ) \big) }{ \log (c_3 \vartheta_n \log n)  } \asymp \rho.
\end{align*}
Hence, if $\rho>0$ is small enough, then the regular variation $h \in \mathsf{RV}_1$ implies that, for all $k=k_0,\ldots,k_n-1$,  
$$
 \int_{0 }^{  c_4 k   } \frac{h (\eta_n) du}{h(\eta_n + u  h (\eta_n) )}    
 \geq \int_{0 }^{  c_4 k   } \frac{du}{2} = \frac{c_4k}{2}.
$$
As long as $c_4>0$ is such that $e^{-c_4/2} 2^m <1$, we shall finish the proof by noting that 
$$
\lim_{k_0 \to \infty} \limsup_{n\to \infty} \sum_{k=k_0}^{k_n-1}\PP \left(   T_n(k) >\eta_n + c_4 k h (\eta_n) ,  E_{n,k} ^c
     \right) 
\lesssim \lim_{k_0 \to \infty} \sum_{k=k_0} ^ \infty 2^{mk} e^{-c_4 k / 2} = 0.
$$
\end{proof}

\subsubsection{Bottom Comparison}
\label{subsubsec:bottom-comparison}

Again, regardless of the open interval $B\subset [0,1]$, we define 
\begin{equation}
    \label{eq:upper-bound-bottom-part-extreme}
    M_n^\mathsf{btm} =  \max \left\{ X_t : 0 \leq t  \leq n,  j_m(t) \geq  2^{k_n} \right\}.
\end{equation}

\begin{proposition} 
\label{prop:bottom-part-do-not-exceed-upper-bound}
    For any $\rho>0$, 
    \begin{equation}
       \label{eq:bottom-part-do-not-exceed-upper-bound}
    \lim_{n \to \infty} 
     \PP 
    \bigg(  
   M_n^\mathsf{btm}
     > M^\mathsf{upper}_n (B,c_1=0,c_2=0)
    \bigg) = 0.
    \end{equation}
\end{proposition}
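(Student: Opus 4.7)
The strategy will be to mirror the block-decomposition argument of Proposition \ref{prop:middle-part-upper-bound}, adapted to the range $k \geq k_n$ instead of $k_0 \leq k < k_n$. Since $c_1 = c_2 = 0$ forces $d_n = V(0) = 0$, one has $M_n^\mathsf{upper}(B, 0, 0) = \sum_{i=1}^m V(w_n/\Gamma_{j_i^\star(B)})$, and \eqref{eq:a-deterministic-bound-m-largest-jumps} reduces the claim to showing $\lim_n \PP(M_n^\mathsf{btm} > m V(w_n/K)) = 0$ for some sufficiently large fixed $K$. Decompose $M_n^\mathsf{btm} = \bigvee_{k \geq k_n} M_n^\mathsf{btm}(k)$, where $M_n^\mathsf{btm}(k) = \max\{X_t : 0 \leq t \leq n,\ 2^k \leq j_m(t) < 2^{k+1}\}$.

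There is a natural upper cutoff at $K_n^{\mathrm{cut}} := \lceil \log_2(w_n/z_0) \rceil$: for $k \geq K_n^{\mathrm{cut}}$ every summand $V(w_n/\Gamma_j)$ with $j \geq 2^k$ vanishes, so $M_n^\mathsf{btm}(k) \leq (m-1) V(w_n/\Gamma_1)$. Proposition \ref{prop:properties-exp-slowly-varying-distr}$(\rom{1})$ gives $V(w_n)/h \circ V(w_n) \to \infty$, which makes $(m-1) V(w_n/\Gamma_1)$ strictly below $m V(w_n/K)$ with probability tending to one, simultaneously over all such $k$. For the finite range $k_n \leq k < K_n^{\mathrm{cut}}$, invoke the three-term bound $M_n^\mathsf{btm}(k) \leq (m-1) V(w_n/\Gamma_1) + V(w_n/\Gamma_{2^k}) + T_n(k)$ from the middle proof; \eqref{eq:pertubation-result;2.1} applied to $L = 2^{k_n}$ (whose $\log$ satisfies the growth condition \eqref{eq:pertubation-growth-condition} by construction of $k_n$), together with monotonicity of $V(w_n/\Gamma_{2^k})$ in $k$, gives the uniform margin $m V(w_n/K) - (m-1) V(w_n/\Gamma_1) - V(w_n/\Gamma_{2^k}) \gtrsim k_n h \circ V(w_n)$ with probability tending to one.

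The main technical obstacle is the uniform bound on $T_n(k)$ across the whole range $[k_n, K_n^{\mathrm{cut}}) \asymp [k_n, \log_2 n]$. The middle-proof estimate $\PP(T_n(k) > \eta_n + c_4 k h(\eta_n)) \lesssim 2^{mk} e^{-c_4 k/2}$ relies on $c_4 k h(\eta_n)/\eta_n$ staying bounded, which in turn requires $k \lesssim \eta_n/h(\eta_n) \asymp k_n/\rho$; for $k$ in this subrange the middle bound applies verbatim and $\sum_{k \geq k_n} 2^{mk} e^{-c_4 k/2} \to 0$ because $k_n \to \infty$ (no auxiliary $k_0$ is needed, unlike the middle case). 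For the complementary subrange $k_n/\rho \lesssim k < K_n^{\mathrm{cut}}$, I will replace the $k$-dependent threshold by the single constant threshold $\theta_n := c_5 k_n h \circ V(w_n)/2$, which sits strictly below the margin thanks to \eqref{eq:quantile-ratio-lower-bound} (which makes $V(\vartheta_n \log n) = \eta_n \ll k_n h \circ V(w_n)$, by a factor of order $\exp(c (\log \log n)^\delta)$). The bound becomes $\PP(T_n(k) > \theta_n) \leq 2^{mk} \vartheta_n \log n \cdot \overline{\nu}(\theta_n)$, and the rapid variation of $\overline{\nu}$ (Proposition \ref{proposition:Gamma-Pi-Variation}$(\rom{1})$) together with a careful use of the $\Gamma$-variation \eqref{eq:Gamma-varying-definition} at scale $\theta_n$ yields sufficient polynomial decay in $n$ to dominate both the configuration count $2^{mk} \leq n^{m \log 2}$ (for $k \leq K_n^{\mathrm{cut}}$) and the cardinality factor $\vartheta_n \log n$. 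The delicate quantitative step is precisely this last estimate; I expect to handle it by combining Proposition \ref{prop:properties-exp-slowly-varying-distr}$(\rom{2})$ with the specific structure of the von Mises representation to secure tail decay of $\overline{\nu}(\theta_n)$ faster than $n^{-(m \log 2 + 1)}$.
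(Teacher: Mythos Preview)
Your block decomposition approach has a genuine gap in the ``complementary subrange'' $k_n/\rho \lesssim k < K_n^{\mathrm{cut}}$. The crucial claim is that $\overline{\nu}(\theta_n)$ with $\theta_n = c_5 k_n \, h\circ V(w_n)/2$ decays polynomially faster than $n^{-(m\log 2 + 1)}$, but this is false. Since $h\circ V_\#(x)/V_\#(x) \asymp \zeta(\log x)/\log x$ and $k_n \asymp \rho\log n/\zeta(\log n)$, one gets $\theta_n \asymp c\,V(w_n)$ for some constant $c>0$ depending on $\rho$. Now for any fixed $c>0$,
\[
\log\frac{\overline{\nu}(V(w_n))}{\overline{\nu}(c\,V(w_n))}
= \int_{V(w_n)}^{c V(w_n)}\frac{du}{h(u)}
\asymp \frac{V(w_n)}{h(V(w_n))}
\asymp \frac{\log w_n}{\zeta(\log w_n)}
\asymp \frac{k_n}{\rho},
\]
so $\overline{\nu}(\theta_n) = n^{-(1-\beta)}\exp(\pm C k_n)$. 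But $k_n = o(\log n)$ because $\zeta(u)\to\infty$ by \eqref{eq:growth-condition-of-zeta(u)}, hence $\overline{\nu}(\theta_n) = n^{-(1-\beta)+o(1)}$. The union bound over $t\in\{0,\dots,n\}$ already gives $n\,\overline{\nu}(\theta_n) = n^{\beta+o(1)}\to\infty$, and the additional factor $2^{mk}\vartheta_n\log n$ only makes things worse. No choice of $c_5$ repairs this: the available margin is at most $V(w_n)$, and even $\overline{\nu}(V(w_n))=1/w_n$ is far too slow.

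The paper does something quite different. It abandons the $k$-by-$k$ decomposition for the bottom part and uses a single bound
\[
M_n^{\mathsf{btm}} \le (m-1)V\!\left(\tfrac{w_n}{\Gamma_1}\right) + V\!\left(\tfrac{w_n}{\Gamma_{2^{k_n}}}\right) + \max_{0\le t\le n}\sum_{j\ge n^{\delta_0}} V\!\left(\tfrac{w_n}{\Gamma_j}\right)\one(t\in I_{j,n}),
\]
showing via the Karamata representation that $V(w_n/2^{k_n})/V(w_n)\to e^{-\rho(1-\beta)^{\alpha-1}}<1$, which produces a gap of order $\epsilon V(w_n)$. The remaining task is exactly \eqref{eq:bottom-part-do-not-exceed-upper-bound;2}, and this is where the essential new ingredient enters: Proposition~\ref{prop:a-fast-enough-tail-estimation} proves $n\cdot\PP\big(\sum_{j\ge n^r} V(w_n/\Gamma_j)\one(0\in I_{j,n}) > \epsilon V(w_n)\big)\to 0$ by a compound Poisson expansion and a concavity optimization. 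The point is that each summand in this truncated series is at most $x_n = V(w_n/n^r)$, and $m_n := \epsilon V(w_n)/x_n \to\infty$ by \eqref{eq:quantile-ratio-lower-bound}; thus exceeding $\epsilon V(w_n)$ requires at least $m_n$ large jumps, and the resulting bound $\exp(-g(x_n)m_n/3)$ with $g(x_n)\asymp\log n$ is super-polynomially small. This truncation effect is precisely what the crude bound $\PP(X_0>\theta_n)\asymp\overline{\nu}(\theta_n)$ cannot see, and it is what your proposal is missing.
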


\begin{proof}[Proof of Proposition \ref{prop:bottom-part-do-not-exceed-upper-bound}]
    \phantom{blank}

   As it is argued in \eqref{eq:middle-part-do-not-exceed-upper-bound;equivalent-1}, 
   it suffices to  show that, for any $C > 0$, 
  \begin{equation}
       \label{eq:bottom-part-do-not-exceed-upper-bound;1}
    \lim_{n \to \infty} 
     \PP 
    \bigg(  
   M_n^\mathsf{btm}
     > m V\left( \frac{w_n}{C} \right)
    \bigg) = 0.
    \end{equation}
  Due to   \eqref{eq:finite-intersection-properties-empty}, it follows that
  \begin{align*}
  \lim_{n\to \infty}
      \PP \bigg(
        M_n^\mathsf{btm} \leq & (m-1) V \left( \frac{w_n}{\Gamma_1} \right) + 
         V \left( \frac{w_n}{\Gamma_{2^{k_n}}} \right) \\
         & + \max_{0\leq t \leq n} \sum_{j \geq n^{\delta_0}} 
          V \left( \frac{w_n}{\Gamma_j} \right) \one(t\in I_{j;n})
      \bigg) = 1.
  \end{align*}
From \eqref{eq:main-assumption-V-sharp-and-V}, we obtain
$$
     \frac{V ( w_n  / 2^{k_n} ) }{  V (w_n)  }
     \sim \exp \left(  - \int_{ w_n / 2^{k_n} } ^{w_n} \frac{\zeta(\log u)}{u \log u} du  \right) 
     = \exp \left(  - \int_{ \log w_n - k_n \log 2} ^{ \log w_n} \frac{\zeta(u)}{u } du  \right).
$$
By Lemma \ref{lemma:properties-exp-slowly-varying-distr} $(\rom2)$ and the assumption $\zeta\in \mathsf{RV}_\alpha$, it follows that
\begin{align*}
    \int_{ \log w_n - k_n \log 2} ^{ \log w_n} \frac{\zeta(u)}{u } 
\sim \zeta (\log w_n) \log \frac{\log w_n}{\log w_n - k_n \log 2} 
\sim \rho (1-\beta)^{\alpha-1} > 0
\end{align*}
 Therefore, for any $0< \epsilon < \rho (1-\beta)^{\alpha-1}$,  the Law of the Large Numbers gives 
\begin{equation}
    \label{eq:small-gap-key-gap}
    \lim_{n\to \infty} 
      \PP \left(
      \frac{V ( w_n  /\Gamma_{ 2^{k_n} } ) }{  V (w_n)  } < 1-\epsilon  
      \right) = 1.
\end{equation}
Due to  \eqref{eq:small-gap-key-gap} and \eqref{eq:Pi-variation-and-weak-convergence},
we will establish  \eqref{eq:bottom-part-do-not-exceed-upper-bound;1} once we prove
 \begin{equation}
    \label{eq:bottom-part-do-not-exceed-upper-bound;2}
    \lim_{n \to \infty} 
     \PP 
    \left(  
   \max_{0\leq t \leq n} \sum_{j \geq n^{\delta_0}} 
          V \left( \frac{w_n}{\Gamma_j} \right) \one(t\in I_{j;n}) >\epsilon V(w_n)
    \right) = 0, \quad \forall \, \epsilon > 0.
    \end{equation}
The proof \eqref{eq:bottom-part-do-not-exceed-upper-bound;2} is systematic on its own, see the proposition below.
\end{proof}

\vspace{1em}

\begin{proposition} 
\label{prop:a-fast-enough-tail-estimation}
\phantom{blank}

    For any $0<r<1-\beta$ and any $\epsilon > 0$, 
    \begin{equation}
       \label{eq:maximum-of-remainder-series-union-bound}
     \lim_{n\to \infty}  n \cdot \PP\left(  \sum_{j \geq n^r} V \left( \frac{w_n}{\Gamma_j} \right) \mathbf{1} \left(0 \in I_{j,n} \right) 
       \geq \epsilon V(w_n)
       \right) = 0
    \end{equation}
\end{proposition}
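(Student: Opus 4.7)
My plan is to Poissonize the sum, apply an exponential Chebyshev bound to the resulting Poisson integral, and reduce the whole problem to a quantitative decay of $V(w_n)/V(w_n/n^r)$ supplied by Proposition \ref{prop:properties-exp-slowly-varying-distr}(ii). First, $B_j := \one(0 \in I_{j,n})$ are i.i.d.\ Bernoulli$(1/w_n)$ and independent of $(\Gamma_j)$, because $\mu_n(A_0) = 1/w_n$. Marking-thinning makes $\{\Gamma_j : B_j = 1\}$ a Poisson point process $\tilde N$ on $\bbr_+$ of intensity $(1/w_n)\,dx$. Since $\PP(\Gamma_{\lceil n^r \rceil} < n^r/2) \leq e^{-cn^r} = o(1/n)$ by a standard Chernoff bound, the index cutoff $j \geq n^r$ may be replaced by $\Gamma_j \geq n^r/2$ at negligible cost, dominating the target sum by $\tilde Y := \int_{n^r/2}^\infty V(w_n/x)\,d\tilde N(x)$.

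The change of variable $u = w_n/x$ converts Campbell's formula to
\[
\log \EE e^{s\tilde Y} = \int_{z_0}^{2w_n/n^r} \frac{e^{sV(u)}-1}{u^2}\,du.
\]
Choosing $s = 1/V(w_n/n^r)$ keeps $sV(u) \leq 1+o(1)$ uniformly on $[z_0, 2w_n/n^r]$; then $e^{sV(u)} - 1 \leq e \cdot sV(u)$, and since $V$ is increasing, the integral is at most $es \cdot V(w_n/n^r)/z_0 = e/z_0$. Writing $R_n := V(w_n)/V(w_n/n^r)$, the Chernoff inequality then gives
\[
\PP\bigl(\tilde Y \geq \epsilon V(w_n)\bigr) \leq \exp\bigl(-\epsilon R_n + e/z_0\bigr),
\]
so it suffices to establish $R_n \geq (1+\delta_0)\epsilon^{-1}\log n$ eventually, for some $\delta_0 > 0$.

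For the decay of $R_n$ I invoke Proposition \ref{prop:properties-exp-slowly-varying-distr}(ii). Since $w_n \asymp n^{1-\beta}$ up to slow variation and $w_n/n^r \asymp n^{1-\beta-r}$, taking $\alpha_1 = 1-\beta$ and $\alpha_2 = 1-\beta-r$ in \eqref{eq:quantile-ratio-lower-bound} yields
\[
R_n \geq V_\#(n^{1-\beta})/V_\#(n^{1-\beta-r}) > \exp\bigl(c(\log\log n)^\delta\bigr)
\]
for any $0 < c < (1+\delta)^{-1}\log((1-\beta)/(1-\beta-r))$ and $\delta$ drawn from \eqref{eq:growth-condition-of-zeta(u)}. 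The principal obstacle is precisely this last quantitative comparison: one must force $c(\log\log n)^\delta$ to outrun $\log\log n$ comfortably so that $R_n$ dominates every polynomial in $\log n$. If the direct Chernoff is borderline, the refinement is to first dominate $\tilde Y \leq V(w_n/n^r) \cdot N$, where $N$ is the Poisson count with mean at most $1/z_0$, and then apply the Poisson tail $\PP(N \geq \epsilon R_n) \leq \exp\bigl(-(1+o(1))\epsilon R_n \log R_n\bigr)$, which weakens the required decay to $R_n \log R_n \gg \log n$ and absorbs the gap between $(\log\log n)^\delta$ and $\log\log n$.
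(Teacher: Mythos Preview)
Your Poissonization step and the reduction to the compound Poisson sum $\tilde Y$ are correct and match the paper. The gap is in the tail bound: both your Chernoff inequality and your Poisson-count refinement need $R_n = V(w_n)/V(w_n/n^r)$ to beat $\log n$, and this is \emph{not} available in general. Proposition~\ref{prop:properties-exp-slowly-varying-distr}(ii) gives only $R_n \gtrsim \exp\bigl(c(\log\log n)^\delta\bigr)$, where $\delta>0$ is the parameter from the growth condition \eqref{eq:growth-condition-of-zeta(u)}. When $\delta<1$ --- which is allowed, e.g.\ the super-log-normal family of Example~\ref{example:super-log-normal-tail} with $n=1$ and $\gamma\in(1/2,1)$ gives $\zeta(u)\asymp(\log u)^{1/\gamma-1}$, forcing $\delta<1/\gamma-1<1$ --- one has $R_n = (\log n)^{o(1)}$, so neither $R_n\gg\log n$ nor $R_n\log R_n\gg\log n$ holds. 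Your refinement does not close the gap: the step from $\exp\bigl(c(\log\log n)^\delta\bigr)$ to $\log n$ is exactly the same obstruction in both versions.

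What your argument discards is the \emph{distribution} of the summands, keeping only the bound $\xi_{i,n}\le x_n = V(w_n/n^r)$. The paper instead expands $\PP\bigl(\sum_{i=1}^{m_n+d}\xi_{i,n}>u_n\bigr)$ as an integral against the density $\prod_i e^{-g(z_i)}g'(z_i)$ on $(1,x_n)^{m_n+d}$ and uses concavity of $z\mapsto g(z)=-\log\overline{H_\#}(z)$ to locate the minimum of $\sum_i g(z_i)$ over the constraint $\sum_i z_i\ge u_n$: at least $m_n/3$ coordinates must sit at $x_n$, contributing $\exp\bigl(-g(x_n)\,m_n/3\bigr)$. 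The crucial point is $g(x_n)\sim(1-\beta-r)\log n$, so the bound becomes $\exp\bigl(-c\,m_n\log n\bigr)$ rather than your $\exp(-c\,m_n)$ or $\exp(-c\,m_n\log m_n)$. This extra factor of $\log n$ reduces the requirement on $m_n\asymp R_n$ from ``$\gg\log n$'' to merely ``$\to\infty$'', which Proposition~\ref{prop:properties-exp-slowly-varying-distr}(ii) does guarantee for every $\delta>0$. A separate (easy) case handles the event that the Poisson count exceeds $u_n/2$, where $u_n\asymp V(w_n)\gg(\log n)^2$ by Proposition~\ref{prop:properties-exp-slowly-varying-distr}(i) suffices.
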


\begin{proof}[Proof of Proposition \ref{prop:a-fast-enough-tail-estimation}]
\phantom{blank}

    Because of the Chernoff  bound and that $r$ is arbitrary, it suffices to show that 
    \begin{equation}
       \label{eq:maximum-of-remainder-series-union-bound-equivalent}
     \lim_{n\to \infty}  n \cdot \PP\left(  \sum_{\Gamma_j \geq n^r} V \left( \frac{w_n}{\Gamma_j} \right) \mathbf{1} \left(0 \in I_{j,n} \right) 
       \geq \epsilon V(w_n)
       \right) = 0.
    \end{equation}
  For simplicity, we introduce 
 for each $n \in \bbn$, 
    $$
    u_n = \epsilon V(w_n), \quad  x_n = V \left( \frac{w_n}{n^r} \right), \quad m_n = \left \lfloor  \frac{u_n}{x_n} \right \rfloor. 
    $$
    And, for each $s \in \bbn$, define $G_s: (1,\infty)^s \to \bbr$ by 
    $$
   G_s (z_1,\ldots, z_s) = \int_{1}^{z_1} \frac{du}{h(u)} + \cdots + 
    \int_{1}^{z_s} \frac{du}{h(u)} \triangleq g(z_1) + \cdots + g(z_s).
    $$
    For all large enough $n$, let $N_n$ be a Poisson random variable with mean $1-n^rw_n^{-1} > 0$. And, let $(\xi_{i,n} : i \in \bbn)$ be a family of i.i.d.\ random variables independent of $N_n$. Let  $\xi_{1;n}$ have the distribution $H_n$,
    $$
    H_n (a,b) \triangleq  \frac{\nu(a,b)} {\nu (1, x_n)} =   \frac{H_\#(a,b)} { H_\# (1, x_n)}, \quad \forall \, 1 \leq a < b \leq x_n.
    $$
    It follows that 
    \begin{equation}
        \sum_{\Gamma_j > n^r}  V \left( \frac{w_n}{\Gamma_j} \right) \mathbf{1} \left(0 \in I_{j,n} \right)  \eid 
        \sum_{i=1}^{N_n} \xi_{i,n}. \notag
    \end{equation}
    and hence
    \begin{align}
         \PP\left(  \sum_{\Gamma_j \geq n^r} V \left( \frac{w_n}{\Gamma_j} \right) \mathbf{1} \left(0 \in I_{j,n} \right)  \right)  
        &=  \sum_{d=0}^\infty \PP( N_n = m_n + d ) \cdot \PP\left( \sum_{i=1}^{m_n+d} \xi_{i,n} > u_n \right)  \notag \\
        & \triangleq 
        \sum_{d=0}^\infty B_d \cdot L_d.
        \label{eq:truncated-remainder-series-compound-Poisson-expansion;}
    \end{align}
    We obviously have
    \begin{equation}
        \label{eq:truncated-remainder-series-compound-Poisson-expansion;Bd}
        B_d \leq \frac{( 1 - n^r w_n^{-1} )^{m_n+d} }  {(m_n+d)!} 
        \leq \frac {1} {(m_n + d)!} ,
    \end{equation}
    so the key is estimate $L_d$.

    \vspace{1em}

    To this end, choose any positive $c >1 / \nu(1,\infty)$, then for all $n\gg 0$, 
    \begin{align}
        L_d = & \int_{ (1, x_n)^{m_n+d} } \mathbf{1} 
        \left(
          \sum_{i=1}^{m_n+d} z_i > u_n
        \right)
        \prod_{i=1}^{m_n+d} H_n (dz_i) \notag \\
        \leq & c^{m_n+d}  \int_{ (1, x_n)^{m_n+d} } \mathbf{1} 
        \left(
          \sum_{i=1}^{m_n+d} z_i > u_n
        \right)
        \prod_{i=1}^{m_n+d}  H_\# (dz_i) \notag \\
        = & c ^{m_n+d}  \int_{ (1, x_n)^{m_n+d} } \mathbf{1} 
        \left(
          \sum_{i=1}^{m_n+d} z_i > u_n
        \right)
        \prod_{i=1}^{m_n+d} \exp (- g(z_i)) g^\prime (z_i) d z_i  \notag \\
        \leq & \big(c  g(x_n) \big)^{m_n+d} \cdot \exp (  - C_{d,n} )  \label{eq:truncated-remainder-series-compound-Poisson-expansion-concavity-optimization;1} 
    \end{align}
    where 
    \begin{equation}
         \label{eq:truncated-remainder-series-compound-Poisson-expansion-concavity-optimization;2}
         C_{d,n} = \inf \left \{  
     G_{m_n+d} (z_1,\ldots, z_{m_n+d}) :
     \sum_{i=1}^{m_n+d} z_i \geq u_n , 1 < z_i < x_n    
     \right\}.
    \end{equation}
Since the function $G_{m_n+d}$ is concave in all of its variables, the infimum  \eqref{eq:truncated-remainder-series-compound-Poisson-expansion-concavity-optimization;2} must be obtained at a boundary point subject to the conditions below: 
\begin{itemize}
    \item[$(\rom1)$] $k_d$ many coordinates equal to $x_n$;
    \item[$(\rom2)$] $m_n+d - k_d - 1$ many coordinates equal to $1$;
    \item[$(\rom3)$] a final coordinate lies in $[1, x_n]$ and  equals to $u_n - k_d x_n - (m_n+d-k_d-1)$;
    \item[$(\rom4)$]  $k_d$ is the smallest integer among all possible choices.
\end{itemize}

\vspace{1em}
 To finish the proof, let us consider $d\geq 0$ in two scenarios. First, if 
\begin{equation}
    \label{eq:estimate-Qd-small-range-d}
    d \leq \left \lfloor \frac{u_n}{2} - x_n \right \rfloor - m_n \triangleq d_n^\ast, 
\end{equation}
then we must obtain 
\begin{equation}
    k_d  \geq  \frac{m_n}{3}. 
    \notag
\end{equation}
Otherwise, for $n\gg 0$, we would have 
\begin{align*}
  &  k_d x_n + (m_n+ d - k_d - 1 ) \\
  \leq &  k_d x_n + (m_n+d)  \leq \frac{m_n x_n}{3} + \left \lfloor \frac{u_n}{2} - x_n \right \rfloor \leq \frac{5 u_n}{6} - x_n \ll u_n - 2x_n
\end{align*}
which were a contradiction to the condition $(\rom3)$ above. It thus follows that  
$C_{d,n} \geq g(x_n) m_n /3$ and 
\begin{align*}
  \sum_{d=0}^{d_n^\ast} B_d \cdot L_d 
   \leq & \exp \left(  -\frac{g(x_n) m_n}{3}  \right) 
    \cdot \sum_{d=0} ^{d^\ast} \frac{ [c g(x_n) ]^{m_n+d}}{(m_n+d)!} 
    \leq \exp \left( -g (x_n) \left( \frac{m_n}{3} - c \right) \right).   
\end{align*}
Note that
$
g(x_n) = -\log \overline{H_\#} \big( V(w_n/n^r) \big) \sim (1-\beta-r)\log n
$
and
$
m_n \big/ \log \log n \to \infty.
$
 by \eqref{eq:quantile-ratio-lower-bound}.
Therefore, 
 \begin{equation}
     \label{eq:sum-BdLd-before-critical-d}
   n \cdot \sum_{d=0}^{d_n^\ast} B_d \cdot L_d \to 0.
 \end{equation}
It remains to show that 
 \begin{equation}
     \label{eq:sum-BdLd-after-critical-d}
   n \cdot \sum_{d=d_n^\ast+1}^\infty B_d \cdot L_d \to 0.
 \end{equation}
 Clearly, we have
 $$
     n \cdot \sum_{d=d^\ast_n+1}^{\infty} B_d \cdot L_d \leq n \sum_{d=d^\ast_n+1}^{\infty} B_d
\lesssim \frac{n}{(m_n+d^\ast_n)!}.
 $$
From \eqref{eq:estimate-Qd-small-range-d} we get $m_n + d_n \asymp u_n$ and from Proposition \eqref{prop:properties-exp-slowly-varying-distr} $(\rom1)$ we get 
$
u_n / (\log n)^2 \to \infty.
$
Hence, for all $n \gg 0$, we claim from the Stirling's formula that 
$$
 \frac{n}{(m_n+d^\ast_n)!} \leq \frac{n}{ \lfloor (\log n)^2 \rfloor ! }\to 0.
$$
Putting together \eqref{eq:sum-BdLd-before-critical-d} and \eqref{eq:sum-BdLd-before-critical-d} finishes the proof. 
\end{proof}

\subsection{Proofs of Main Theorems}
\label{subsec:proofs-of-main-theorems}

\begin{proof}[Proof of Theorem \ref{main-theorem:random-sup-measure-convergence}]

Let $B_1,\ldots, B_s$ be arbitrarily $s\in \bbn$ disjoint open intervals. It suffices to show that 
\begin{equation}
    \label{eq:rsm-convergence-main-theorem-1}
    \left( \frac{\mathcal{M}_n(B_r) - b_n}{a_n} \right)_{r=1,\ldots,s}  
    \Rightarrow 
     \left( \mathcal{M}_\beta(B_s) \right)_{r=1,\ldots,s}  .
\end{equation}
For each $B_r$, by \eqref{eq:the-lower-bound-rsm-is-trivial} and \eqref{eq:upper-bound-with-high-P-no-exceed}, we note that 
$$
\lim_{c_1, c_2 \to \infty} \limsup_{n\to \infty} \PP\left( M_n^\mathsf{lower}(B_r) \leq  \mathcal{M}_n(B_r) \leq M_n^\mathsf{upper}(B_r, c_1, c_2) \right) = 1.
$$
Because of \eqref{eq:weak-convergence-of-lower-bound}, we will obtain \eqref{eq:rsm-convergence-main-theorem-1} once we show that 
\begin{align}
    & \frac{M_n^\mathsf{upper}(B_r, c_1, c_2) - M_n^\mathsf{lower}(B_r)}{a_n}  \notag \\
    = & \frac{ V(w_n / \Gamma_{j^{\doublestar}_n (B) } ) - V(c_1 \vartheta_n) - c_2 h\circ V(c_1\vartheta_n) } { h \circ V(w_n) } \Rightarrow 0.  \label{eq:rsm-convergence-main-theorem-2}
\end{align}
By \eqref{eq:weak-convergence-of-j-star-star-m-plus-one} and \eqref{eq:quantile-ratio-lower-bound} respectively, we have 
\begin{equation}
      \frac{V(w_n / \Gamma_{j^{\doublestar}_n (B) } )  - V(c_1 \vartheta_n) }{ h \circ V(\vartheta_n)}  \Rightarrow -\log (c_1 U_B)   \quad \text{and} \quad
\frac{c_2 h\circ V(c_1\vartheta_n)}{ h \circ V(w_n)} \to 0.
\label{eq:rsm-convergence-main-theorem-3}
\end{equation}
Hence, \eqref{eq:rsm-convergence-main-theorem-2} is proved.

\end{proof}

\vspace{1em}

Our proof of Theorem \ref{main-theorem:extremal-processes-convergence} will again require ``Billingsley's convergence together lemma". For each $n\in \bbn$ and $k\in \bbn$, we first define the truncated random sup-measure $\mathcal{M}_{n,k}$ and the induced extremal process $\mathcal{E}_{n,k}$,
\begin{align*}
  & \mathcal{M}_{n,k} (B)= \max_{t\in nB} \sum_{j=1}^\infty V\left( \frac{w_n}{\Gamma_j}  \right) \one( t\in I_{j,n} ), \quad B \in \mathcal{B}_{[0,1]}, \quad t \in \bigcup_{j=1}^k I_{j,n}; \\
  & \mathcal{E}_{n,k} (t)  = \mathcal{M}_{n,k} \big( (0,t] \big), \quad t \in (0,1].
\end{align*}
On the limit profile, we shall also need the truncated sup-derivative and the induced objects,
\begin{align*}
 & \eta_k (t) = 
 \begin{cases}
   \sum_{j=1}^k - \log \Gamma_j \one (t\in R_j), \quad & \text{if } \sum_{j=1}^k \one(t\in R_j) = m \\   
   -\infty , & \text{otherwise}
 \end{cases}, \\
 & \mathcal{M}_{\beta,k} (B) = \sup_{t\in B} \eta_k (t), \quad B \in \mathcal{B}_{[0,1]}; \\
 & \mathcal{E}_{\beta,k} (t) = \mathcal{M}_{\beta,k} \big( (0,t] \big) = \sup_{s\in (0,t]} \eta_k (s), \quad t \in (0,1].
\end{align*}
All notations above are consistent with \eqref{eq:series-representation-abuse}, \eqref{eq:rsm-restricted-sup-derivative-eta} and \eqref{eq:limit-rsm-restriction-unit-interval}. We shall take $\delta_0 \in (0,1-\beta-1/(m+1))$ as used in 
\eqref{eq:m-intersected-sets-collection-prelimit}.

\vspace{1em}

\begin{proof}[Proof of Theorem \ref{main-theorem:extremal-processes-convergence}]
\phantom{blank}

It suffices to prove that, for all $0< T_1 < T_2 \leq 1$, 
\begin{equation}
    \label{eq:main-thm-extreme-pro-convergence;1}
   \frac{ \mathcal{E}_n (\cdot) - b_n } { a_n } 
   \xLongrightarrow{( D[T_1, T_2], J_1)} \mathcal{E}_\beta (\cdot)
\end{equation}
To apply \cite{billingsley:1999:book} Theorem 3.2, we  shall show the followings.
\begin{align}
  & \lim_{k\to \infty} \PP\left( \big( \mathcal{E}_{\beta,k} (t) \big)_{t\in [T_1, T_2]} =  \big( \mathcal{E}_\beta (t) \big)_{t\in [T_1, T_2]}   \right) = 1
  \label{eq:main-thm-extreme-pro-convergence;2}, \\
 & \lim_{k\to \infty} \limsup_{n\to \infty} \PP\left( \big( \mathcal{E}_n (t) \big)_{t\in [T_1, T_2]} \neq  \big( \mathcal{E}_{n,k} (t) \big)_{t\in [T_1, T_2]} \right) = 0 , 
 \label{eq:main-thm-extreme-pro-convergence;3} \\
  & \frac{ \mathcal{E}_{n,k} (\cdot) - b_n } { a_n } 
   \xLongrightarrow{( D[T_1, T_2], J_1)} \mathcal{E}_{\beta,k} (\cdot).
  \label{eq:main-thm-extreme-pro-convergence;4}
\end{align}

\vspace{1em}

We easily note that 
$$
\lim_{k\to \infty} \big( \mathcal{E}_{\beta,k} (t) \big)_{t\in [T_1, T_2]} 
\xlongequal{a.s.}
\big( \mathcal{E}_\beta (t) \big)_{t\in [T_1, T_2]} ,
$$
which gives \eqref{eq:main-thm-extreme-pro-convergence;2}. 
Let us prove  \eqref{eq:main-thm-extreme-pro-convergence;3} by a counterargument. 
Suppose that \eqref{eq:main-thm-extreme-pro-convergence;3}  were false, then it would follow that
\begin{align}
     &\liminf_{k\to \infty} \,
    \liminf_{n\to \infty} 
     \PP \bigg[  \exists \, t \in [T_1, T_2] \text{ such that }
     \mathcal{M}_n \big( (0,t] \big) \neq \mathcal{M}_{n,k}\big( (0,t] \big) \bigg] > 0.  \label{eq:main-thm-extreme-pro-convergence;3.2}
\end{align} 
On the event $\{   \mathcal{M}_n \big( (0,t] \big) \neq \mathcal{M}_{n,k}\big( (0,t] \big)  \}$, clearly 
$$
     \mathcal{M}_n \big( (0,t] \big) \leq \max \left\{ M^\mathsf{mid}_n( \lfloor \log_2 k \rfloor ) , M^\mathsf{btm}_n  \right\}.
$$
On the other hand, we have for all $T_1\leq t \leq T_2$,
$$
\mathcal{M}_n \big( (0,t] \big) \geq \mathcal{M}_n \big( (0,T_1] \big). 
$$
Hence, \eqref{eq:main-thm-extreme-pro-convergence;3.2} would imply that 
$$
\liminf_{k\to \infty} \,
    \liminf_{n\to \infty} 
     \PP \bigg[  \max \left\{ M^\mathsf{mid}_n( \lfloor \log_2 k \rfloor ) , M^\mathsf{btm}_n  \right\} > \mathcal{M}_{n}\big( (0,T_1] \big) \bigg] > 0,
$$
which contradicts with Proposition \ref{prop:middle-part-upper-bound} and Proposition \ref{prop:bottom-part-do-not-exceed-upper-bound}.

\vspace{1em}

Finally, let us prove \eqref{eq:main-thm-extreme-pro-convergence;4}.  For each fixed $k \geq m$, consider events 
\begin{align*}
    & E_{n,k} \triangleq \left\{ \exists\, 1\leq j_1 < \cdots < j_m \leq k \text{ such that }  \bigcap_{i=1}^m I_{j_i,n}  \neq \emptyset \right\} ,\\
    &  E_{\beta,k} \triangleq \left\{ \exists\, 1\leq j_1 < \cdots < j_m \leq k \text{ such that } \bigcap_{i=1}^m R_{j_i}  \neq \emptyset \right\}. 
\end{align*}
and we can decompose 
$$
\mathcal{E}_{n,k} (\cdot) = \mathcal{E}_{n,k} (\cdot)  \one_{E_{n,k}} + \mathcal{E}_{n,k} (\cdot)  \one_{E_{n,k}^c}.
$$
It is obvious that 
\begin{equation}
    \frac{ \mathcal{E}_{n,k} (\cdot) - b_n } { a_n }  \one_{E_{n,k}^c} 
   \xLongrightarrow{( D[T_1, T_2], J_1)} -\infty \cdot \one_{E_{\beta,k}^c}, \quad -\infty \cdot 0 \triangleq 0;
  \label{eq:main-thm-extreme-pro-convergence;4.1}
\end{equation}
with no jumps in the limit sample paths.  We next note that  
\begin{align*}
  & \mathcal{M}_{n,k} (\cdot) \one_{E_{n,k}} = \max_{J=(j_1,\ldots,j_m)_< \subset \{1,\ldots, k\} } \mathcal{M}_{n,k,J} (\cdot)\one_{E_{n,k}} , \quad -\infty \cdot 0 \triangleq 0; \\
  &  \mathcal{M}_{n,k,J} (B) = \max_{t\in nB} \sum_{j=1}^\infty V\left( \frac{w_n}{\Gamma_j}  \right) \one( t\in I_{j,n} ), \quad B \in \mathcal{B}_{[0,1]}, \quad t \in \bigcap_{i=1}^m I_{j_i,n}.
\end{align*}
And   we can write $ \mathcal{E}_{n,k} (\cdot) \one_{E_{n,k}} = \max_{J=(j_1,\ldots,j_m)_< \subset \{1,\ldots, k\} } \mathcal{E}_{n,k,J} (\cdot) $ where  
\begin{align*}
  &  \mathcal{E}_{n,k,J} (t) \triangleq  \mathcal{M}_{n,k,J} \big( (0,t] \big), \quad t\in [T_1 , T_2].
\end{align*}
For each $J$ above, we claim that
\begin{equation}
    \label{eq:main-thm-extreme-pro-convergence;4.2}
    \frac{  \mathcal{E}_{n,k,J} (\cdot) - b_n} {a_n}  \xLongrightarrow{\big( D[T_1, T_2], J_1 \big)}
  \mathcal{E}_{\beta,k,J} (\cdot)
\end{equation}
 with
$$
  \mathcal{E}_{\beta,k,J} (t) \triangleq 
  \begin{cases}
      \sum_{i=1}^m -\log \Gamma_{j_i}, \quad  & t \geq \inf \bigcap_{i=1}^m \mathcal{R}_{j_i} \\
       - \infty,  & t < \inf \bigcap_{i=1}^m \mathcal{R}_{j_i}  
  \end{cases}, \quad t\in [T_1, T_2].
$$
The $J_1$-convergence  in  \eqref{eq:main-thm-extreme-pro-convergence;4.2} hold because there is at most one jump in $[T_1, T_2]$ and the joint weak convergence of jump location and height holds. Due to 
\begin{equation}
    \label{eq:main-thm-extreme-pro-convergence;4.3}
    \mathcal{E}_{\beta,k} (\cdot) = \max_{J=(j_1,\ldots,j_m)_< \subset \{1,\ldots, k\} } \mathcal{E}_{\beta,k,J} (\cdot),
\end{equation}
so there are at most $k^m$ jumping points in the sample path of $ \mathcal{E}_{\beta,k}$ on $[T_1, T_2]$.  We obtain \eqref{eq:main-thm-extreme-pro-convergence;4} from  \eqref{eq:main-thm-extreme-pro-convergence;4.1}, \eqref{eq:main-thm-extreme-pro-convergence;4.2} and \eqref{eq:main-thm-extreme-pro-convergence;4.3}.
\end{proof}

\bibliographystyle{alpha}
\bibliography{chernzl}

\end{document}